\newtheorem{theorem}{Theorem}[section]
\newtheorem{assumption}[theorem]{Assumption}
\newtheorem{corollary}[theorem]{Corollary}
\newtheorem{definition}[theorem]{Definition}
\newtheorem{lemma}[theorem]{Lemma}
\newtheorem{proposition}[theorem]{Proposition}
\newtheorem{remark}[theorem]{Remark}
\newenvironment{proof}[1][Proof]{\noindent\textit{#1.} }{\hfill \rule{0.5em}{0.5em}}
\newcommand{\R}{\mathbb{R}}
\renewcommand{\eqref}[1]{(\ref{#1})}
\renewcommand{\d}{{\rm d}}
\begin{document}

\title{\textbf{An integrated semigroup approach for age structured equations with diffusion and non-homogeneous boundary conditions}}
\author{\textsc{Arnaud Ducrot$^{a}$, Pierre Magal$^b$ and Alexandre Thorel$^a$}\\
{\small \textit{$^{a}$ Normandie Univ, UNIHAVRE, LMAH, FR-CNRS-3335, ISCN, 76600 Le Havre, France}}\\
		{\small \textit{email: arnaud.ducrot@univ-lehavre.fr}}\\
		{\small \textit{email: alexandre.thorel@univ-lehavre.fr}}\\
	{\small \textit{$^{b}$ Univ. Bordeaux, IMB, UMR 5251, F-33400 Talence, France}} \\
		{\small \textit{CNRS, IMB, UMR 5251, F-33400 Talence, France.}}\\
		{\small \textit{email: pierre.magal@u-bordeaux.fr}} 
\\
}
\maketitle
\begin{abstract}

In this work, we consider a linear age-structured problem with diffusion and non-homogeneous boundary conditions both for the age and the space variables. We handle this linear problem by re-writing it as a non-densely defined abstract Cauchy problem. To that aim we develop a new result on the closedness of a commutative sum of two non-densely defined operators by using the theory of integrated semigroups. As an application of this abstract result, we are able to associate a suitable integrated semigroup to some age-structured problem with spatial diffusion and equipped with non-homogeneous boundary conditions. This integrated semigroup is characterized by the description of its infinitesimal generator.
Further applications of our abstract result are also given to the commutative sum of two almost sectorial operators, for which we derive a closedness results.

\vspace{0.2in}\noindent \textbf{Key words}. Age-structured problem with diffusion, non densely-defined operators, almost sectorial operators, integrated semigroups, commutative  sum of linear operators.

\vspace{0.1in}\noindent \textbf{2010 Mathematical Subject Classification}. 47A10, 47D06, 47D62, 47F05, 47D03. 

\end{abstract}

\section{Introduction}

The study of linear non-homogeneous abstract Cauchy problems involving non-densely defined operator has a long history starting with the pioneer work of Da Patro and Sinestrari \cite{DaPrato-Sinestrari}. 
 The theory related to such equations has received a lot of interests in the last decades with a huge development of the so-called integrated semigroups. We refer the reader to  \cite{Arendt1, Arendt2, Arendt-book, Kellermann, Magal-Ruan07, Magal-Ruan09b, Neubrander, Thieme90a}, to the recent monograph of Magal and Ruan \cite{Magal-Ruan17} and the references therein cited. Indeed linear and semilinear abstract Cauchy problems with non-densely defined operators allow to handle various different classes of equations such as delay differential equations, age structured equations, parabolic equations with nonlinear and nonlocal boundary conditions and others (see \cite{Magal-Ruan17} for an overview of some possible applications of integrated semigroups to various types of equations). Formulation of partial differential equations as an abstract Cauchy problem allows on the one hand to deal with the existence and the uniqueness of mild solutions, and more interestingly to deal with various perturbation results and spectral theory. On the other hand, this also provides a powerful tool to study semilinear problems by using the constant variation formula. It allows in particular to study stability and instability properties, to construct smooth invariant manifolds and also to deal with bifurcation theory. We refer to the reader to \cite{DM-TAMS, LMR12, Magal-Ruan09a} and references therein cited. See also \cite{MLR14} for normal form reduction for semilinear abstract Cauchy problems, involving non-densely defined operators.

In this work we study the following linear age-structured problem with diffusion and non-homogeneous boundary conditions
\begin{equation}\label{eq-lin}
\begin{cases}
\left(\partial_t +\partial_a-\Delta_x\right)u(t,a,x)=f(t,a,x), & t>0,\;a>0,\;x\in\Omega,\\
u(t,0,x)=g(t,x), & t>0,\;x\in\Omega,\\
\nabla u(t,a,x)\cdot \nu(x)=h(t,a,x), & t>0,\;a>0,\;x\in\partial\Omega, \\
u(0,a,x) = u_0(a,x), & a>0,\;x \in \Omega.
\end{cases}
\end{equation}
Herein $\Omega\subset \R^N$ denotes a bounded spatial domain with a smooth boundary $\partial\Omega$, $\nu(x)$ with $x\in\partial \Omega$ denotes the outwards unit vector to $\partial\Omega$, while $f=f(t,a,x)$, $u_0(a,x)$, $g=g(t,x)$ and $h=h(t,a,x)$ denote suitable functions those regularities and specific properties will be described latter. The system \eqref{eq-lin}  do not contain any mortality terms and birth terms since it can be considered from our analysis and by using bounded perturbation technics  (similar to the ones presented in \cite{Magal-Ruan17}).

Linear as well as non-linear age-structured equations with diffusion has been introduced in various contexts, in particular in mathematical biology to model the interplay between the spatial motions of populations and the history of individuals, that somehow generalizes diffusion equations with time delay. We refer to the reader to the pioneer works of Gurtin and MacCamy \cite{Gurtin1, Gurtin2}. We also refer to the more recent developments in \cite{DiBlasio, 
 Ducrot-Magal, DM, DM-TAMS, Kubo-Langlais, Langlais, 
  Walker1, Webb08} and the references therein cited for various studies of (linear or non-linear) age structured equations with spatial diffusion and more generally for analysis of problems coupling hyperbolic and diffusive effects.\\
Here we aim at considering non-homogeneous perturbations both at the spatial boundary $\partial\Omega$ and at the age boundary $a=0$. To our best knowledge this paper is the first work dealing such a boundary conditions. And as a special case, we able to provide the form of the mild solutions for such a problem (see Lemma \ref{LEmild}). Here we consider a simple age structured model, but the analysis (and the mild solutions) obtained in the present article can be extended in several ways to handle more realistic models, as for instance by taking into account mortality rates. 
The study of such a problem can be handled using different strategies. In view of the recent literature, to consider such a question, one may split the above problem into three sub-problems separately taking into account the initial data at $t=0$, the non-homogeneous boundary condition at $a=0$ and the non-homogeneous Neumann boundary condition at $\partial\Omega$.
In other words, a solution of \eqref{eq-lin} can be considered as $u=u_1+u_2+u_3$ wherein $u_1$, $u_2$ and $u_3$ respectively solve the following sub-problems
\begin{equation}\label{eq-lin1}
\begin{cases}
\left(\partial_t +\partial_a-\Delta_x\right)u_1(t,a,x)=f(t,a,x), & t>0,\;a>0,\;x\in\Omega,\\
u_1(t,0,x)=0, & t>0,\;x\in\Omega,\\
\nabla u_1(t,a,x)\cdot \nu(x)=0, & t>0,\;a>0,\;x\in\partial\Omega,\\
u_1(0,a,x)=u_0(a,x), & a>0,\;x \in \Omega,
\end{cases}
\end{equation}
 \begin{equation}\label{eq-lin2}
\begin{cases}
\left(\partial_t +\partial_a-\Delta_x\right)u_2(t,a,x)=0, & t>0,\;a>0,\;x\in\Omega,\\
u_2(t,0,x)=g(t,x), & t>0,\;x\in\Omega,\\
\nabla u_2(t,a,x)\cdot \nu(x)=0, & t>0,\;a>0,\;x\in\partial\Omega,\\
u_2(0,a,x)=0, & a>0,\;x \in \Omega,
\end{cases}
\end{equation}
and
\begin{equation}\label{eq-lin3}
\begin{cases}
\left(\partial_t +\partial_a-\Delta_x\right)u_3(t,a,x) = 0, & t>0,\;a>0,\;x\in\Omega,\\
u_3(t,0,x) = 0, & t>0,\;x\in\Omega,\\
\nabla u_3(t,a,x)\cdot \nu(x) = h(t,a,x), & t>0,\;a>0,\;x \in \partial\Omega,\\
u_3(0,a,x) = 0, & a>0,\;x \in \Omega.
\end{cases}
\end{equation}

Each of these sub-problems can be re-written as abstract Cauchy problems when considered in suitable Lebesgue spaces. The first one can be handled by the commutative sum of two densely defined operators (see \cite{Daprato-Grisvard}) while the second and the third can be handled using more recent results (see \cite{DM-TAMS, Thieme08}) on the commutative sum of one densely defined operator and one non-densely defined operator. More details on this approach will be given in Section 4. This interesting approach allows us to deal with the existence of solutions for \eqref{eq-lin} but does not directly provide a reformation of the full problem \eqref{eq-lin} as an abstract Cauchy problem. In this work we develop another methodology to study \eqref{eq-lin} by deriving a new and rather general result on the closedness of a commutative sum of two non-densely defined operators. As a special case, one can apply our general result to problem \eqref{eq-lin} by re-writing it as a linear non-densely defined abstract Cauchy problem, that generates a suitable integrated semigroup and for which we able to describe its infinitesimal generator. This direct approach allows us to deal with the existence and uniqueness of mild solutions for such a linear problem but it also provides a tractable analytical framework to handle related semilinear problem involving, for instance, nonlinear and nonlocal boundary conditions both at age $a=0$ and on the spatial boundary $\partial\Omega$. This non-linear issue will not be discussed in this paper (see \cite{Magal-Ruan17} for non-linear studies related to integrated semigroups). Furthermore, as a by-product of our general analysis, we also obtains a new abstract result ensuring that the closure of the sum of two resolvent commuting non-densely defined operators becomes an almost sectorial operator (see \eqref{DEF-AS} below for the definition of such operators).  

In the literature the properties of the sum of two resolvent commuting linear operators has been extensively studied, starting from the pioneer work of Da Prato and Grisvard \cite{Daprato-Grisvard}. One may for instance refer to \cite{arendt-bu-haase, dore-venni,
 pruss-sohr
 } where closedness, sectorial property or boundedness of imaginary powers of the commutative sums of two operators have been investigated, under various assumptions and mostly for densely defined operators. One may also refer to \cite{
labbas, roidos, roidos2} where the closability and the invertibility of the commutative sum of two sectorial operators has been discussed using functional calculus, allowing possibly operators with non-dense domains. Here we consider different settings where the functional calculus used in the aforementioned works does not apply in general. Rather our arguments and our analysis are based on the integrated semigroups theory, in the spirit of the works of Thieme \cite{Thieme08} and Ducrot and Magal \cite{DM-TAMS}.

This paper is organized as follows. In Section 2 we first recall important results on integrated semigroups, that will be used in throughout this work. We also state our main assumptions and our results on the closedness and some properties of the commutative sum of two non-densely defined linear operators. Section 3 is concerned with the proof of the main result while Section 4 is devoted to the  application of our abstract result to problem \eqref{eq-lin}.

\section{Definitions, assumptions and main results}

In this section we state the main results of this note, that are firstly concerned with the closedness of the resolvent commuting sum of two non-densely defined linear operators. Secondly we discuss some conditions ensuring that the closure is the infinitesimal generator of an integrated semigroup. Throughout this section, we denote by $\left(X,\|\cdot\|\right)$ a given and fixed Banach space.\\
Before going to our main results let us recall some definitions and discuss the main assumptions that will be used in this work.

\subsection{Preliminary and definitions}

In this subsection we present some materials on linear abstract Cauchy problems involving non-densely defined operators and we recall some important definitions and results about integrated semigroups that will be used throughout this work. We refer the reader to the monograph \cite{Magal-Ruan17}, the references therein cited and the references cited above for more details and more results on this topic.\\
In the following, when $X$ and $Z$ are two given Banach spaces we denote by $\mathcal{L}\left( X,Z\right)$ the Banach space of the bounded linear operators from $X$ into $Z$ and we denote for simplicity by $\mathcal{L}\left( X\right) $ the space $\mathcal{L}\left( X,X\right).$

\begin{definition}
Let $T:D(T)\subset X\to X$ be a linear operator. We denote by 
$${\rm Graph}\,(T):=\left\{ (x,Tx),\;x\in D(T)\right\}\subset X\times X,$$
its graph. We say that the linear operator $T$ is closable if it admits a closed extension, that is there exists a closed linear operator $T':D(T')\subset X\to X$ such that
\begin{equation*}
\overline{{\rm Graph}\,(T)}\subset {\rm Graph}\,(T').
\end{equation*}
In that case, we define the closure of $T$, denoted by $\overline{T}:D\left(\overline{T}\right)\subset X\to X$, as the smallest closed extension of $T$, namely
\begin{equation*}
D(\overline T)=\{x\in X:\;\exists ! y\in X \;(x,y)\in \overline{{\rm Graph}\,(T)}\}\text{ and }\overline Tx=y,\quad \forall x\in D(\overline T).
\end{equation*}
In other words, one has $x\in D(\overline T)$ and $y=\overline Tx$ if and only if there exists a sequence $(x_n)_{n\geq 0}\subset D(T)$ such that $\|x_n-x\|\to 0$ and $\|Tx_n-y\|\to 0$ as $n\to \infty$.
\end{definition}
Throughout the rest of this paper, if $T:D(T)\subset X\to X$ is a linear operator, we use the notation ${\rm R}(T)$ to denote the range of $T$, that is ${\rm R}(T)=T \left( D(T) \right) \subset X$.

In the following we first recall some known results on integrated semigroups and then we recall some important results on integrated semigroups generated by the so-called almost sectorial operators.

\subsubsection{Integrated semigroups}

In this section we consider a linear operator $A:D(A)\subset X\to X$ and we denote by $\rho(A)$, the resolvent set of $A$. We assume that $(A,D(A))$ that satisfies the following set of conditions.
\begin{assumption}\label{ASS1}
The linear operator $A:D(A)\subset X\to X$ satisfies the following {\bf weak Hille-Yosida} property 
\begin{itemize}
\item[{\rm (i)}] There exist $\omega_A\in\R$ and $M_A\geq 1$ such that $(\omega_A,\infty)\subset \rho(A)$ and
\begin{equation*}
\|(\lambda-A)^{-k}\|_{\mathcal L\left(\overline{D(A)}\right)}\leq \frac{M_A}{(\lambda-\omega_A)^k},\quad\forall \lambda>\omega_A,\;k\geq 1.
\end{equation*}
\item[{\rm (ii)}] For all $x\in X$, one has $(\lambda-A)^{-1}x\to 0$ as $\lambda\to\infty$.
\end{itemize}
\end{assumption}
We denote by $A_0:D(A_0)\subset\overline{D(A)}\to \overline{D(A)}$ the part of $A$ in $\overline{D(A)}$ the linear operator given by
\begin{equation}\label{Def A0}
D(A_0)=\left\{x\in D(A):\;Ax\in \overline{D(A)}\right\}\text{ and }A_0 x=Ax,\quad\forall x\in D(A_0).
\end{equation}
Let us first recall that, since $\rho(A)\neq \emptyset$, one has $\rho(A)=\rho(A_0)$ (see Magal and Ruan \cite[Lemma 2.1]{Magal-Ruan09a}) and that, under Assumption \ref{ASS1} ${\rm (i)}$, $A_0$ is the infinitesimal generator of a strongly continuous semigroup on $\overline{D(A)}$, denoted by $\left\{T_{A_0}(t)\right\}_{t\geq 0}$. Moreover it satisfies the following estimate
\begin{equation*}
\|T_{A_0}(t)\|_{\mathcal L(\overline{D(A)})}\leq M_A e^{\omega_A t},\quad \forall t\geq 0.
\end{equation*}
We shall denote by $\omega _{0}(A_{0})\in [-\infty,\infty)$ its the growth rate.
%
%
And since $\rho(A)=\rho(A_0)$ this in particular yields $
\left( \omega _{0}(A_{0}),\infty \right) \subset \rho(A).$\\
Next, according to \cite{Magal-Ruan07}, under the above assumption, $A$ is the generator of a unique non degenerate integrated semigroup, denoted by $\left\{S_A(t)\right\}_{t\geq 0}\mathcal L\left(X,\overline{D(A)}\right)$, that is defined for each $x\in X$, $t\geq 0$ and $\mu>\omega_A$ by
$$
S_A(t)x=\mu \int_0^t T_{A_0}(s)(\mu-A)^{-1}x\d s+(\mu-A)^{-1}x-T_{A_0}(t)(\mu-A)^{-1}x,
$$
and that furthermore satisfies
\begin{itemize}
\item[{\rm (i)}] For each $x\in X$ and $t\geq 0$,
\begin{equation*}
\int_0^t S_A(s)\d s\in D(A) \quad \text{and} \quad S_A(t)x=A\int_0^t S_A(s)\d s+tx.
\end{equation*}
\item[{\rm (ii)}] For each $t,s\geq 0$ one has $S_A(t+s)-S_A(s)=T_{A_0}(s)S_A(t)$.
\end{itemize}
We continue this section by recalling some results about the constant variation formula. To that aim we now fix $\tau>0$ and for each $f\in L^1(0,\tau;X)$ we define the convolution $S_A\ast f$ by
\begin{equation*}
\left(S_A\ast f\right)(t)=\int_0^t S_A(s)f(t-s)\d s, \quad \forall t\in [0,\tau].
\end{equation*}
Note that when $f\in C^1([0,\tau];X)$ the convolution map $t\mapsto (S_A\ast f)(t)$ is continuously differentiable from $[0,\tau]$ into $\overline{D(A)}$.

Next, the existence and uniqueness of mild solutions for the non-homogeneous Cauchy problem 
\begin{equation}\label{CP}
\displaystyle \frac{\d u(t)}{\d t}=A u(t)+f(t),\quad t>0\text{ with }u(0)=x\in\overline{D(A)},
\end{equation}
for some $f\in L_{\rm loc}^p([0,\infty);X)$ and $p\geq 1$, is related to the following condition.
\begin{assumption}\label{ASS2}
There exist $p\in [1,\infty)$, $M>0$ and $\omega\in\R$ such that for all $f\in C^1\left([0,\infty);X\right)$ one has
\begin{equation}\label{estiLp}
\left\Vert  \frac{\d }{\d t} (S_A\ast f)(t)\right\Vert \leq M \left(\int_0^t e^{p\omega(t-s)}\|f(s)\|^p \d s\right)^{\frac{1}{p}},\quad\forall t\geq 0.
\end{equation}
\end{assumption}
\begin{remark}  
For the purpose of this work, it is important to work with $L^p$-estimates with suitable values for $p\in [1,\infty)$. Indeed, as it will be clear in the sequel, such $L^p$-framework will allow us to compensate a singularity coming from almost sectorial perturbation (see \eqref{esti-TB}). 
\end{remark}

Under the above assumptions, namely Assumptions \ref{ASS1} and \ref{ASS2}, the derivation-convolution operator $\frac{\d}{\d t}(S_A\ast\cdot)$ enjoys the following properties. We refer to Magal and Ruan \cite{Magal-Ruan07, Magal-Ruan17} for the proof of the properties stated below and for further properties.
\begin{lemma}\label{LE2.3}
Let Assumptions \ref{ASS1} and \ref{ASS2} be satisfied. For each function $f\in L_{\rm loc}^p([0,\infty);X)$, the convolution function $(S_A\ast f)$ belongs to $C^1\left([0,\infty);\overline{D(A)}\right)$ and the function $u(t):=\frac{\d }{\d t}(S_A\ast f)(t)$ is the unique mild (or integrated) solution of the abstract Cauchy problem \eqref{CP} with $x=0$,
namely the function $t \to u(t)$ satisfies
\begin{equation*}
\int_0^t u(s)\d s\in D(A),\quad \forall t\geq 0 \quad \text{and} \quad u(t)=A\int_0^t u(s)\d s+\int_0^t f(s)\d s,\quad\forall t\geq 0.
\end{equation*}
Moreover, \eqref{estiLp} holds for all $f\in L^p_{\rm loc}\left(\left[0,\infty\right);X\right)$ and, for each $\lambda>\omega_0(A_0)$ one has
$$
\left(\lambda-A_0\right)^{-1}\frac{\d }{\d t}(S_A\ast f)(t)=\int_0^t T_{A_0}(t-s)(\lambda-A)^{-1}f(s)\d s,\quad\forall t\geq 0.
$$
\end{lemma}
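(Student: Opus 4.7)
The plan is to build the statement by density from the regular case $f\in C^1$, using the $L^p$-estimate of Assumption \ref{ASS2} as the uniform continuity bound that enables the extension. The starting observation is the one already recorded before the lemma: for $f\in C^1([0,\tau];X)$ the map $t\mapsto (S_A\ast f)(t)$ lies in $C^1([0,\tau];\overline{D(A)})$ and, together with its derivative $u(t)=\frac{\d}{\d t}(S_A\ast f)(t)$, satisfies the integral identity $u(t)=A\int_0^t u(s)\d s+\int_0^t f(s)\d s$ (standard integrated-semigroup fact, obtained by writing $(S_A\ast f)(t)=\int_0^t S_A(t-s)f(s)\d s$, differentiating using $f\in C^1$, and invoking property (i) of $S_A$). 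For such $f$, the bound \eqref{estiLp} of Assumption \ref{ASS2} is given.

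Given $f\in L^p_{\rm loc}([0,\infty);X)$, I fix $\tau>0$ and pick a sequence $(f_n)_{n\ge 0}\subset C^1([0,\tau];X)$ with $f_n\to f$ in $L^p(0,\tau;X)$. Let $u_n(t):=\frac{\d}{\d t}(S_A\ast f_n)(t)$. Applying \eqref{estiLp} to $f_n-f_m$ gives
\begin{equation*}
\|u_n(t)-u_m(t)\|\le M\left(\int_0^t e^{p\omega(t-s)}\|f_n(s)-f_m(s)\|^p\d s\right)^{1/p},
\end{equation*}
so $(u_n)$ is Cauchy in $C([0,\tau];\overline{D(A)})$, and its uniform limit $u\in C([0,\tau];\overline{D(A)})$ inherits \eqref{estiLp}. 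Integrating termwise, $\int_0^t u_n(s)\d s\to \int_0^t u(s)\d s$ uniformly on $[0,\tau]$; on the other hand $(S_A\ast f_n)(t)\to (S_A\ast f)(t)$ uniformly because $S_A(\cdot)$ is bounded on $[0,\tau]$ and $f_n\to f$ in $L^p$ (hence in $L^1$) on $[0,\tau]$. Since $(S_A\ast f_n)(t)=\int_0^t u_n(s)\d s$, passing to the limit yields $(S_A\ast f)(t)=\int_0^t u(s)\d s\in \overline{D(A)}$, and continuity of $u$ then shows $(S_A\ast f)\in C^1([0,\tau];\overline{D(A)})$ with derivative $u$.

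The mild-solution identity transfers to $u$ by closedness of $A$. From the smooth case,
\begin{equation*}
A\int_0^t u_n(s)\d s=u_n(t)-\int_0^t f_n(s)\d s.
\end{equation*}
The left argument $\int_0^t u_n(s)\d s$ converges in $X$ to $\int_0^t u(s)\d s$, while the right-hand side converges in $X$ to $u(t)-\int_0^t f(s)\d s$. Closedness of $A$ yields $\int_0^t u(s)\d s\in D(A)$ and the required identity. Uniqueness is standard: if $u,v$ are two mild solutions with $x=0$ and the same $f$, then $w=u-v$ satisfies $\int_0^t w(s)\d s\in D(A_0)$ and $w(t)=A_0\int_0^t w(s)\d s$, so $t\mapsto \int_0^t w(s)\d s$ is a classical solution of the homogeneous Cauchy problem on $\overline{D(A)}$ with zero initial datum, forcing $w\equiv 0$. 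Finally, the resolvent identity is obtained by first verifying it for $f\in C^1$: applying $(\lambda-A_0)^{-1}$, which commutes with $A$ and is bounded on $\overline{D(A)}$, to both sides of $u(t)=A\int_0^t u(s)\d s+\int_0^t f(s)\d s$, and noting that $t\mapsto (\lambda-A_0)^{-1}u(t)$ solves the classical inhomogeneous Cauchy problem driven by $T_{A_0}$ with forcing $(\lambda-A)^{-1}f$, the variation-of-constants formula for $T_{A_0}$ gives the stated convolution expression; a further density argument carries it to all $f\in L^p_{\rm loc}$.

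The main obstacle I anticipate is the careful bookkeeping at the passage to the limit step: one needs $(S_A\ast f)$ to truly land in $\overline{D(A)}$ (not just in $X$) and to be differentiable there, which requires the uniform convergence of $u_n$ in the $\overline{D(A)}$-norm together with the closedness of $A$; the $L^p$-flavor of \eqref{estiLp} is exactly what is needed to absorb rough $f$ while keeping this uniform control, and, as the remark after Assumption \ref{ASS2} indicates, this flexibility in $p$ will later be essential to handle almost-sectorial perturbations where $p=1$ would not suffice.
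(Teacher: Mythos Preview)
Your proof is correct and follows the standard density argument. Note that the paper does not actually prove this lemma: it is stated with the sentence ``We refer to Magal and Ruan \cite{Magal-Ruan07, Magal-Ruan17} for the proof of the properties stated below,'' so there is no in-paper proof to compare against. Your approximation-by-$C^1$ argument combined with the closedness of $A$ is precisely the route taken in those references, and your handling of the resolvent identity (verify for $C^1$, then pass to the limit) is also standard; an equivalent shortcut is to note that $(\lambda-A_0)^{-1}(S_A\diamond f_n)(t)=\int_0^t T_{A_0}(t-s)(\lambda-A)^{-1}f_n(s)\,\d s$ follows directly from the explicit formula for $S_A$ and then pass to the limit on both sides.
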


In the sequel, for each $\tau>0$ and each $f\in L^1(0,\tau;X)$, we make use of the symbol $\diamond$ to denote the derivation-convolution operator, that is 
\begin{equation*}
\left(S_A\diamond f\right)(t)=\frac{\d }{\d t}(S_A\ast f)(t),\quad\forall t\in [0,\tau],
\end{equation*}
as soon as the map $t\mapsto (S_A\ast f)(t)$ is continuously differentiable on $[0,\tau]$.\\
Using this notation, let us also recall that, for each $\tau>0$ and each $f\in L^p(0,\tau;X)$, the following formula holds for $0\leq s\leq t\leq \tau$
\begin{equation}\label{constant-varf}
\left(S_A\diamond f\right)(t)=T_{A_0}(t-s)\left(S_A\diamond f\right)(s)+\left(S_A\diamond f(\cdot+s)\right)(t-s).
\end{equation}

\subsubsection{Analytic integrated semigroup}

In this subsection we present some materials related to non-densely defined almost sectorial operators. We refer to Ducrot and al. \cite{Ducrot-Magal-Prevost09} and to Ducrot and al. \cite{DM-Pisa} for more details.

Let $B:D(B)\subset X\rightarrow X$ be a closed linear operator.
We denote by $B_{0}$ the part of $B$ in $\overline{D(B)}$. Throughout this section we assume that $B$ satisfies the following assumptions.
\begin{assumption}\label{ASS-analytic} 
Let $B:D(B)\subset X\rightarrow X$ be a linear operator
on a Banach space $\left( X,\left\Vert \cdot\right\Vert \right).$ We assume that
\begin{itemize}
\item[{\rm (a)}] The linear operator $B_{0}$ is the infinitesimal generator of a strongly continuous analytic semigroup of bounded linear operators on $\overline{D(B)}$, denoted by $\left\{ T_{B_{0}}(t)\right\} _{t\geq 0}$.
\item[{\rm (b)}] There exist $\omega_B \in \mathbb{R}$ and $p^{\ast }\in \left[1,\infty \right)$ such that $\left( \omega_B ,\infty \right) \subset \rho\left(B\right) $ and 
\begin{equation*}
\left\Vert \left( \lambda -B\right) ^{-1}\right\Vert _{\mathcal{L}\left( X\right) }=\mathcal O\left(\frac{1}{\lambda ^{\frac{1}{p^{\ast }}}}\right)\text{ as }\lambda\to\infty. 
\end{equation*}
\end{itemize}
\end{assumption}
Let us recall that  Proposition 3.3 in \cite{Ducrot-Magal-Prevost09} ensures that the above conditions are equivalent to $B_0$ is sectorial operator on $\overline{D(B)}$ and $B$ is $\frac{1}{p^*}$-almost sectorial on $X$, in the sense that there exist $\omega\in\R$, $\theta\in\left(\frac{\pi}{2},\pi\right)$ and $M>0$ such that
\begin{equation}\label{DEF-AS} 
\begin{split}
&\Sigma_{\omega,\theta}:=\{\lambda\in\mathbb C\setminus\{\omega\}:\;|{\rm arg}\;(\lambda-\omega)|<\theta\}\subset \rho(B)\text{ and }\\
&|\lambda-\omega|^{\frac{1}{p^*}}\left\|\left(\lambda-B\right)^{-1}\right\|_{\mathcal L(X)}\leq M,\quad\forall \lambda\in \Sigma_{\omega,\theta}.
\end{split}
\end{equation}
We also refer to \cite{Lunardi} for more details. 

Let us observe that, since $p^*<\infty$, the resolvent of $B$, $(\lambda-B)^{-1} \to 0_{\mathcal{L}(X)}$ as $\lambda\to\infty$ in the operator norm topology. Therefore the linear operator $B:D(B)\subset X\to X$ also satisfies Assumption \ref{ASS1} and  the results recalled in the previous subsection hold for the operator $B$. In the following we denote by $\{S_B(t)\}_{t\geq 0}$ and $\left\{T_{B_0}(t)\right\}_{t\geq 0}$ the integrated semigroup and the analytic semigroup generated by $B$ and $B_0$, respectively.\\
Recall now that for any $x\in X$, the map $t\rightarrow S_{B}(t)x$ from $
\left( 0,\infty \right) $ into $X$ is differentiable, so that the family 
\begin{equation*}
T_B(t):=\frac{\d S_{B}(t)}{\d t}=\left( \lambda -B_{0}\right) T_{B_{0}}(t)\left(\lambda -B\right) ^{-1},\quad \text{for }t>0,
\end{equation*}
defines a semigroup of bounded linear operators on $X$. However when $B$ is not densely defined then the family $\left\{ T_B(t)\right\} _{t\geq 0}$ of bounded linear operator on $X$ is not strongly continuous at $t=0$ and the map $t\mapsto T_B(t)$ exhibits a singularity when $t\to 0$.

Next, due to Assumption \ref{ASS-analytic}, the fractional powers operators $\left( \lambda -B_{0}\right) ^{-\alpha }$ are well defined, for any $\lambda >\omega _{0}(B_{0})$ and $\alpha >0$.
%
Since $B$ is only assumed to be $\frac{1}{p^*}$-almost sectorial, the fractional powers of $(\lambda - B)^{-\alpha}$ are only defined $\alpha >0$ large enough, related to $p^\ast$. More precisely, under Assumption \ref{ASS-analytic} and setting 
$$
q^\ast\in (1,\infty] \text{ such that }\frac{1}{p^\ast}+\frac{1}{q^\ast}=1,
$$
for each $\alpha \in \left( \dfrac{1}{q^{\ast }},\infty\right)$ and $\lambda >\omega _{0}(B_{0})$, the fractional power operator $\left( \lambda -B\right) ^{-\alpha }\in \mathcal{L}\left( X, \overline{D(B)}\right)$ is well defined. We refer to cite{periago} or Lemma 3.7 in \cite{Ducrot-Magal-Prevost09} for more details).

%
%
%
%
%
%
%
%
%

These fractional powers allow to derive rather precise properties for the integrated semigroup $\{S_B(t)\}_{t\geq 0}$. In particular we have the following expression for the integrated semigroup $S_B(t)$
\begin{equation*}
S_{B}(t)=\left( \lambda -B_{0}\right) ^{\alpha}\int_{0}^{t}T_{B_{0}}(s)ds\left( \lambda -B\right) ^{-\alpha }\;\forall t\geq 0
\end{equation*}
whenever $\alpha \in \left( \dfrac{1}{q^{\ast }},\infty \right)$ and $\lambda >\omega _{0}(B_{0})$. 
%
%
%
As a consequence of the above expression, for $x\in D(B)$, one also obtains, for any $\frac{1}{q^*}<\alpha\leq 1$, that
\begin{equation*}
\frac{\d^2 S_{B}(t)x}{\d t^2}=\left( \lambda -B_{0}\right) ^{\alpha
}T_{B_{0}}(t)B_0(\lambda-B)^{-1}\left( \lambda -B_0\right) ^{-\alpha }(\lambda-B)x,\quad\forall t>0.
\end{equation*}
Hence, fixing $\lambda>\omega_0(B_0)$, one obtains that for each $\omega>\omega_0(B_0)$ and any $\alpha\in \left(\frac{1}{q^*},1\right)$ there exists some constant $M=M_{\alpha,\omega}$ such that, for all $x\in D(B)$, the following important estimate holds
\begin{equation}\label{esti-2nd}
\left\|\frac{\d^2 S_{B}(t)x}{\d t^2}\right\| \leq \frac{M}{t^\alpha}e^{\omega t}\|(\lambda-B)x\|, \quad\forall t>0. 
\end{equation}

We complete this subsection by recalling the following important estimates. 
\begin{theorem}
Let Assumption \ref{ASS-analytic} holds. Let $\tau>0$ and $f\in
L^{p}\left( 0,\tau ;X\right) $ with $p>p^{\ast }$ be given. Then, the convolution map 
$t\longmapsto \left( S_{B}\ast f\right) (t)$ 
is continuously differentiable and one has
\begin{equation*}
\left( S_{B}\ast f\right) (t)\in D(B),\quad\forall t\in \left[ 0,\tau \right],
\text{ and }
\left( S_{B}\diamond f\right) (t)=B\int_{0}^{t}\left( S_{B}\diamond f\right)
(s)\d s+\int_{0}^{t}f(s)\d s,\quad\forall t\in \left[ 0,\tau \right] .
\end{equation*}
Moreover, for each $\beta \in \left( \dfrac{1}{q^{\ast }},\dfrac{1}{q}\right)$ (with $\dfrac{1}{q}+\dfrac{1}{p}=1$), each $\lambda >\omega _{0}(B_{0})$ and each $t\in \left[ 0,\tau \right]$, the following equality holds
\begin{equation*}
\left( S_{B}\diamond f\right) (t)=\left(T_B\ast f\right)(t)=\int_{0}^{t}\left( \lambda -B_{0}\right)
^{\beta }T_{B_{0}}(t-s)\left( \lambda -B\right) ^{-\beta }f(s)\d s,
\end{equation*}
as well as the following estimate 
\begin{equation*}
\left\Vert \left( S_{B}\diamond f\right) (t)\right\Vert \leq M_{\beta}\left\Vert \left( \lambda -B\right) ^{-\beta }\right\Vert _{\mathcal{L}(X)}\int_{0}^{t}(t-s)^{-\beta }e^{\omega(t-s)}\left\Vert f(s)\right\Vert \d s,\end{equation*}%
wherein $M_{\beta }$ denotes some positive constant and $\omega>\omega
_{0}(B_{0})$. 
\end{theorem}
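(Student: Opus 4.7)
My plan is to reduce the main part of the theorem to Lemma~\ref{LE2.3} by establishing Assumption~\ref{ASS2} for the operator $B$ with the given exponent $p>p^{\ast}$, and then to derive the explicit representation and the weighted estimate directly from properties of fractional powers and the analytic semigroup $\{T_{B_0}(t)\}_{t\geq 0}$.

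I would first introduce, for any fixed $\lambda>\omega_0(B_0)$ and any $\beta\in(1/q^{\ast},1)$, the candidate pointwise derivative
\begin{equation*}
T_B(t)x:=(\lambda-B_0)^{\beta}T_{B_0}(t)(\lambda-B)^{-\beta}x,\qquad t>0,\; x\in X.
\end{equation*}
This is meaningful because $(\lambda-B)^{-\beta}\in\mathcal{L}(X,\overline{D(B)})$ for $\beta>1/q^{\ast}$ (by Assumption~\ref{ASS-analytic} and the comments following it), while $(\lambda-B_0)^{\beta}T_{B_0}(t)$ is a bounded operator on $\overline{D(B)}$ by analyticity of the semigroup generated by $B_0$. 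A standard commutation argument among fractional powers and resolvents shows that $T_B(t)$ does not depend on the choice of $\beta$ nor of $\lambda$, and the analytic smoothing estimate $\|(\lambda-B_0)^{\beta}T_{B_0}(t)\|_{\mathcal{L}(\overline{D(B)})}\leq C_\beta t^{-\beta}e^{\omega t}$ yields
\begin{equation*}
\|T_B(t)\|_{\mathcal{L}(X)}\leq M_\beta\,t^{-\beta}e^{\omega t}\|(\lambda-B)^{-\beta}\|_{\mathcal{L}(X)}.
\end{equation*}
Since $p>p^{\ast}$ is equivalent to $1/q^{\ast}<1/q$, one can pick $\beta$ strictly between these two bounds, and H\"older's inequality applied to $(T_B\ast f)(t)=\int_0^t T_B(t-s)f(s)\,\d s$ then gives both its continuity in $t$ and the weighted estimate of the statement.

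Next, I would prove differentiability of $(S_B\ast f)$ and identify its derivative with $(T_B\ast f)$. For $f\in C^1([0,\tau];X)$, the identity $(S_B\ast f)(t)=\int_0^t S_B(t-s)f(s)\,\d s$ together with $S_B(0)=0$ permits differentiation under the integral sign; using the representation $S_B(t)=(\lambda-B_0)^{\beta}\int_0^t T_{B_0}(s)\,\d s\,(\lambda-B)^{-\beta}$ recalled in the excerpt, the inner derivative yields $T_{B_0}(\cdot)$ and hence $(S_B\diamond f)(t)=(T_B\ast f)(t)$. The pointwise bound above now verifies Assumption~\ref{ASS2} for $B$ with exponent $p$, so Lemma~\ref{LE2.3} applies and produces both $(S_B\ast f)(t)\in D(B)$ and the integrated Cauchy identity $(S_B\diamond f)(t)=B\int_0^t(S_B\diamond f)(s)\,\d s+\int_0^t f(s)\,\d s$. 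By density of $C^1([0,\tau];X)$ in $L^p(0,\tau;X)$ together with the continuity of the linear map $f\mapsto T_B\ast f$ from $L^p(0,\tau;X)$ to $C([0,\tau];X)$, the explicit representation $S_B\diamond f=T_B\ast f$ and the weighted estimate extend to arbitrary $f\in L^p(0,\tau;X)$.

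The main obstacle is the rigorous control of the singularity of $T_B(t)$ at $t=0$. The map $t\mapsto S_B(t)$ is not differentiable at zero as a bounded operator on $X$, and the putative derivative $T_B(t)$ blows up like $t^{-\beta}$. It is precisely the interplay $1/q^{\ast}<\beta<1/q$ that reconciles the two constraints: the lower bound ensures $(\lambda-B)^{-\beta}$ is a bounded operator on all of $X$, hence that $T_B(t)$ is well defined on $X$; the upper bound keeps the kernel $(t-s)^{-\beta}$ in $L^q$, so that H\"older against $f\in L^p$ produces an integrable convolution. All remaining conclusions follow by combining this bound with Lemma~\ref{LE2.3}.
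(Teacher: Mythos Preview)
The paper does not actually prove this theorem: it is stated without proof in Section~2.1.2 as a result recalled from the literature, with the reader referred to \cite{Ducrot-Magal-Prevost09} and \cite{DM-Pisa} for details (see also Remark~\ref{REM-B}). So there is no ``paper's own proof'' to compare against.

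That said, your outline is correct and follows precisely the route taken in those references. The three ingredients you identify are the right ones: (i) the representation $T_B(t)=(\lambda-B_0)^{\beta}T_{B_0}(t)(\lambda-B)^{-\beta}$ combined with the analytic smoothing estimate to get $\|T_B(t)\|_{\mathcal L(X)}\leq M_\beta t^{-\beta}e^{\omega t}$; (ii) the identification $(S_B\diamond f)=(T_B\ast f)$ for $f\in C^1$, which then yields, via H\"older with $\beta q<1$, an estimate of the form required by Assumption~\ref{ASS2}; and (iii) the invocation of Lemma~\ref{LE2.3} to obtain the mild-solution identity and the extension to all of $L^p(0,\tau;X)$. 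Your diagnosis of the ``main obstacle''---balancing the lower bound $\beta>1/q^\ast$ (needed for $(\lambda-B)^{-\beta}$ to be bounded on $X$) against the upper bound $\beta<1/q$ (needed for the kernel $(t-s)^{-\beta}$ to lie in $L^q$)---is exactly the point, and the hypothesis $p>p^\ast$ is what makes this window nonempty.

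One small remark: to literally match the form of Assumption~\ref{ASS2} you should split the exponential weight as $e^{\omega(t-s)}=e^{(\omega-\omega')(t-s)}e^{\omega'(t-s)}$ with $\omega'>\omega$ before applying H\"older, so that the $L^q$-factor $\left(\int_0^t (t-s)^{-\beta q}e^{q(\omega-\omega')(t-s)}\d s\right)^{1/q}$ is bounded uniformly in $t$; this yields an estimate with constant $M$ independent of $t$ and weight $e^{p\omega'(t-s)}$ inside the $L^p$-norm, exactly as \eqref{estiLp} requires.
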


\begin{remark}\label{REM-B}
We may note that the above result ensures that the linear operator $B$ satisfies Assumption \ref{ASS2} with any $\omega>\omega_0(B_0)$ and for any $p>p^\ast$.
\end{remark}

%

\subsection{Assumptions and main results}

In this subsection, we formulate the main assumptions that will be used in this note and state our main results as well. Let $A:D(A)\subset X\to X$ and $B:D(B)\subset X\to X$ be two closed and resolvent commuting linear operators. Here by resolvent commuting we mean that 
\begin{equation}\label{hyp-commute}
(\lambda-A)^{-1}(\mu-B)^{-1}=(\mu-B)^{-1}(\lambda-A)^{-1},\quad\forall \lambda,\mu\in \rho(A)\cap\rho(B),
\end{equation}
wherein $\rho(A)$ and $\rho(B)$ denote the resolvent set of $A$ and $B$, respectively.

As far as operator $A:D(A)\subset X\to X$ is concerned, we assume that it satisfies the following set of properties.
\begin{assumption}[Assumption on the linear operator $A$]\label{ASS1-A}
The closed linear operator $A$ satisfies Assumptions \ref{ASS1} with parameters $\omega_A\in\R$, $M_A\geq 1$ and Assumption \ref{ASS2} with some parameter $p\in [1,\infty)$ and, without loss of generality, $M=M_A$ and $\omega=\omega_A$, that reads as for all $f\in L^p_{\rm loc}\left([0,\infty); X\right)$ one has
\begin{equation*}
\left\|(S_A\diamond f)(t)\right\|\leq M_A \left(\int_0^t e^{p\omega_A(t-s)}\|f(s)\|^p \d s\right)^{\frac{1}{p}},\quad \forall t\geq 0.
\end{equation*} 
\end{assumption}

About the linear operator $B : D(B) \subset X \to X$, we assume that it satisfies the following set of assumptions.
\begin{assumption}[Assumption on the linear operator $B$]\label{ASS1-B}
The closed linear operator $B : D(B) \subset X \to X$ satisfies Assumption \ref{ASS-analytic} with parameters $\omega_B\in\R$ and $p^*\in [1,\infty)$.
\end{assumption}

In order to state our main result, we need an additional assumption coupling the properties of the linear operators $A$ and $B$, that reads as follows.
\begin{assumption}[Assumption coupling $A$ and $B$]\label{ASS1-C}
In addition to the two above assumptions, namely Assumption \ref{ASS1-A} and \ref{ASS1-B}, and the commutativity property \eqref{hyp-commute}, we assume that the parameters $p$ and $p^*$ satisfy
\begin{equation*}
\frac{1}{p}+\frac{1}{p^*}>1.
\end{equation*}
Setting  $q^*\in (1,\infty]$, the conjugate exponent of $p^*$ defined by $\frac{1}{p^*}+\frac{1}{q^*}=1$, the above inequality rewrites as
\begin{equation*}
\frac{1}{q^*}<\frac{1}{p}.
\end{equation*}
\end{assumption}

As recalled in the previous subsection, Assumption \ref{ASS1-A} allows us to introduce the strongly continuous semigroup $\{T_{A_0}(t)\}_{t\geq 0}\subset \mathcal L\left(\overline{D(A)}\right)$ generated by $A_0$, the part of $A$ in $\overline{D(A)}$, while Assumption \ref{ASS1-B} allows us to consider $\{S_B(t)\}_{t\geq 0}$ the integrated semigroup generated by $B$, $T_B(t) = \frac{\d S_B(t)}{\d t}$ as well as $\{T_{B_0}(t)\}_{t\geq 0}\subset \mathcal L\left(\overline{D(B)}\right)$, the analytic and strongly continuous semigroup generated by $B_0$, the part of $B$ in $\overline{D(B)}$. Let us observe that from the commutativity property \eqref{hyp-commute} one has, for all $t\geq 0$,
\begin{equation*}
T_{A_0}(t)\overline{D(A)\cap D(B)}\subset\overline{D(A)\cap D(B)},\quad T_{B_0}(t)\overline{D(A)\cap D(B)}\subset\overline{D(A)\cap D(B)},
\end{equation*}
and
\begin{equation*}
T_{A_0}(t)T_{B_0}(t)x=T_{B_0}(t)T_{A_0}(t)x, \quad\forall x\in \overline{D(A)\cap D(B)},\;\forall t\geq 0.
\end{equation*}
Using the above notations, the main result of this note reads as the follows. 
\begin{theorem}\label{THEO_sum}
Let Assumption \ref{ASS1-C} be satisfied. Consider the linear operator $A + B : D(A+B) \subset X \to X$ given by
\begin{equation*}
\left\lbrace\begin{aligned}
&D(A+B)=D(A)\cap D(B),\\
&(A+B)x=Ax+Bx,\quad\forall x\in D(A+B).
\end{aligned}\right.
\end{equation*}
Then, the following hold
\begin{itemize}
\item[{\rm (a)}] The linear operator $(A+B)$ is closable and there exists $\lambda_0\in\R$ such that for all $\lambda\geq \lambda_0$, we have
\begin{equation}\label{pq}
\overline{\overline{D(B)}+\overline{D(A)}}\subset \overline{{\rm R}\left(\lambda-\left(A+B\right)\right)}.
\end{equation}
Moreover, $A+B$ admits a closed extension, denoted by $\widehat{A+B}:D\left(\widehat{A+B}\right)\subset X\to X$ such that
$\rho\left(\widehat{A+B}\right)$ is not empty and contains some interval of the form $\left[\widehat{\lambda},\infty\right)$, for some $\widehat{\lambda}\in\R$.

\item[{\rm (b)}] If moreover we assume that there exists $\lambda_0\in\R$ such that, for all $\lambda\geq \lambda_0$, one has
\begin{equation}\label{cond}
{\rm R}\left(\lambda-\left(A+B\right)\right)\text{ is dense in }X,
\end{equation}
then the closure $\overline{A+B}:D\left(\overline{A+B}\right)\subset X\to X$ satisfies Assumption \ref{ASS1} and the following properties hold true:
\begin{itemize}
\item [{\rm (i)}] $\overline{D\left(\overline{A+B}\right)}=\overline{D(A)\cap D(B)}$.
\item[{\rm (ii)}] The strongly continuous semigroup generated by $\left(\overline{A+B}\right)_0$, the part of $\overline{A+B}$ in $\overline{D(A)\cap D(B)}$ and denoted by 
$$\left\{T_{\left(\overline{A+B}\right)_0}(t)\right\}_{t\geq 0}\subset \mathcal L\left(\overline{D(A)\cap D(B)}\right),$$ 
is given by
\begin{equation*}
T_{\left(\overline{A+B}\right)_0}(t)=T_{A_0}(t)_{|\overline{D(A)\cap D(B)}}\circ T_{B_0}(t)_{|\overline{D(A)\cap D(B)}},\quad\forall t\geq 0.
\end{equation*} 
\item[{\rm (iii)}] The integrated semigroup generated by $\overline{A+B}$ on $X$ is given by
$$
S_{\overline{A+B}}(t)=\left(S_A\diamond T_B(t-\cdot)\right)(t),\quad\forall t\geq 0.
$$
\item[{\rm (iv)}] The integrated semigroup $\left\{S_{\overline{A+B}}(t)\right\}$ satisfies the following regularity property: for any $r\in [1,\infty)$ with
\begin{equation*}
\frac{1}{r}<\frac{1}{p}+\frac{1}{p^*}-1,
\end{equation*}
 and any $\omega>\omega_0(B_0)$ there exists a continuous function $\delta:[0,\infty)\to [0,\infty)$ with $\delta(0)=0$ such that for any function $f\in L_{\rm loc}^{r}\left([0,\infty);X\right)$, the map $t\longmapsto \left(S_{\overline{A+B}}\ast f\right)(t)$ is continuously differentiable from $[0,\infty)$ into $X$ and the following estimate holds
\begin{equation*}
\|\left(S_{\overline{A+B}}\diamond f\right)(t)\|\leq \delta(t) \|f\|_{L^{r}(0,t;X)},\quad\forall t\geq 0.
\end{equation*}
\end{itemize}
\end{itemize}
\end{theorem}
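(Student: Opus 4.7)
The plan is to define the candidate integrated semigroup explicitly by
\[
S(t)x:=\bigl(S_A\diamond T_B(t-\cdot)x\bigr)(t),\qquad t\ge 0,\;x\in X,
\]
and to identify its generator with $\overline{A+B}$ via a Laplace transform computation. Assumption~\ref{ASS1-C} enters immediately to make this formula well-posed: the almost sectorial theory of Section~2.1.2 yields the singular bound $\|T_B(s)\|_{\mathcal L(X)}\lesssim s^{-\beta}e^{\omega s}$ for any $\beta>1/q^*$, and Lemma~\ref{LE2.3} requires $s\mapsto T_B(t-s)x$ to lie in $L^p(0,t;X)$; the condition $1/q^*<1/p$ is exactly what permits choosing such $\beta<1/p$, and in particular forces $S(t)\in\mathcal L(X,\overline{D(A)})$.

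\textbf{Semigroup identity and resolvent.} Strong continuity of $t\mapsto S(t)$ together with $S(0)=0$ follow from the $L^p$-estimate of Assumption~\ref{ASS1-A} and dominated convergence. The functional identity $S(t+r)-S(r)=T_{A_0}(r)T_{B_0}(r)S(t)$ on $\overline{D(A)\cap D(B)}$ is then derived by combining the translation identity \eqref{constant-varf} for $S_A$, the analogous identity for $S_B$ differentiated in time, and the commutativity~\eqref{hyp-commute}, which transfers into commutation of $T_{A_0}(r)$ with the derivation-convolution against $T_B$; the same commutativity makes $T_{A_0}(r)T_{B_0}(r)$ a $C_0$-semigroup on $\overline{D(A)\cap D(B)}$. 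Next, for $\lambda$ large enough, set
\[
R_\lambda x:=\lambda\int_0^\infty e^{-\lambda t}S(t)x\,dt,\quad x\in X.
\]
Using Fubini and the semigroup representations recalled in Section~2.1, $R_\lambda$ rewrites as a bounded operator built from the product $(\lambda-A)^{-1}(\lambda-B)^{-1}$. A direct computation using \eqref{hyp-commute} then gives $R_\lambda(\lambda-(A+B))x=x$ for every $x\in D(A)\cap D(B)$.

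\textbf{Parts (a) and (b)(i)--(iii).} The identity $R_\lambda(\lambda-(A+B))=I$ on $D(A)\cap D(B)$ simultaneously yields closability of $A+B$, the range-density inclusion~\eqref{pq}, and the existence of a closed extension $\widehat{A+B}$ whose resolvent contains some interval $[\widehat\lambda,\infty)$; this proves (a). Under the extra density condition~\eqref{cond}, the smallest closed extension $\overline{A+B}$ must coincide with $\widehat{A+B}$. The weak Hille--Yosida estimate required by Assumption~\ref{ASS1} is then obtained by iterating the Laplace representation of $R_\lambda^k$ and invoking the exponential bound $\|T_{A_0}(t)T_{B_0}(t)\|\leq Me^{(\omega_A+\omega_B)t}$ on $\overline{D(A)\cap D(B)}$. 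Items (i)--(iii) follow by inspection: (i) from $S(t)X\subset\overline{D(A)\cap D(B)}$ together with range density; (ii) from the identification of the $C_0$-semigroup generated by the part of $\overline{A+B}$ with the product semigroup of Step~2; and (iii) from the very definition of $S$.

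\textbf{Property (iv) and main obstacle.} For (iv), a Fubini manipulation applied to the definition of $S$ gives $\bigl(S_{\overline{A+B}}\diamond f\bigr)(t)=\bigl(S_A\diamond(T_B\ast f)\bigr)(t)$. Applying the $L^p$-bound of Assumption~\ref{ASS1-A} to the outer $\diamond$ and Young's convolution inequality to $T_B\ast f$ with kernel $s^{-\beta}\in L^\rho_{\mathrm{loc}}$ where $1/\rho+1/r=1+1/p$, the integrability constraint $\beta\rho<1$ with $\beta$ arbitrarily close to $1/q^*$ forces $1/\rho<1/q^*$, which rearranges exactly to the admissibility range $1/r<1/p+1/p^*-1$; the constant $\delta(t)$ then absorbs the exponentials and the $L^\rho$-norm of the kernel on $[0,t]$. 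The main obstacle I anticipate is not any single estimate but rather the identification in Step~2 of $R_\lambda$ as the resolvent of an extension that is no larger than $\overline{A+B}$: this requires a Yosida-type approximation argument, approximating $x\in R_\lambda(X)$ by $n^2(n-A)^{-1}(n-B)^{-1}x\in D(A)\cap D(B)$ and using commutativity systematically to push $A$ and $B$ through the double convolution.
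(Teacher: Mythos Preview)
Your overall architecture --- define $S(t)x=(S_A\diamond T_B(t-\cdot)x)(t)$, take its Laplace transform to get a candidate resolvent $R_\lambda$, then identify the generator --- is essentially the paper's, though the paper runs it in the opposite order (it defines $\mathcal R(\lambda,A,B)x=\lim_{t\to\infty}(S_A\diamond e^{-\lambda(t-\cdot)}T_B(t-\cdot)x)(t)$ first and only afterwards shows this equals $\lambda\int_0^\infty e^{-\lambda s}S(s)x\,ds$). Your treatment of (iv) via Young's inequality also matches the paper's estimate.

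However, there is a genuine gap in your proof of part~(a), specifically the range inclusion~\eqref{pq}. The identity you establish,
\[
R_\lambda\bigl(\lambda-(A+B)\bigr)x=x\quad\text{for }x\in D(A)\cap D(B),
\]
is a \emph{left} inverse statement: it gives injectivity of $\lambda-(A+B)$, and (together with boundedness of $R_\lambda$) closability. It says nothing about the \emph{range} of $\lambda-(A+B)$. To get $\overline{D(B)}\subset\overline{R(\lambda-(A+B))}$ you need the \emph{right} inverse direction $(\lambda-(A+B))R_\lambda y=y$ for $y$ in a large set, and for this you must first prove that $R_\lambda y\in D(A)\cap D(B)$ when $y\in D(B)$. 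That regularity statement is not automatic: it requires showing that for $y\in D(B)$ the map $t\mapsto e^{-\lambda t}T_B(t)y$ lies in $W^{1,p}_\eta(0,\infty;X)\cap L^p_\eta(0,\infty;D(B))$ and then tracking $A$ and $B$ through the limiting convolution (the paper's Lemmas~\ref{LE4.2} and~\ref{LE4.3}). The inclusion $\overline{D(A)}\subset\overline{R(\lambda-(A+B))}$ is obtained by yet another route: one restricts to $\overline{D(A)}$, where $A_0$ is now densely defined and $B_1:=B_{|\overline{D(A)}}$ is still almost sectorial, and invokes the earlier densely-defined case (Theorem~4.12 of \cite{DM-TAMS}) for the pair $(B_1,A_0)$. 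Your sketch does not contain either of these arguments.

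Two smaller points. First, the assertion that $R_\lambda$ ``rewrites as a bounded operator built from the product $(\lambda-A)^{-1}(\lambda-B)^{-1}$'' is at best misleading: $R_\lambda$ is an integral of the product semigroup, not an algebraic combination of the two resolvents, and no such formula is used in the paper. Second, the obstacle you flag at the end --- showing that the closed extension with resolvent $R_\lambda$ is no larger than $\overline{A+B}$ under condition~\eqref{cond} --- is handled in the paper not by Yosida approximation but by a graph argument: one introduces a ``weak solution'' notion for $(\widehat\lambda-(A+B))x=y$, shows it is equivalent to $x=R_{\widehat\lambda}y$, and then uses density of $R(\widehat\lambda-(A+B))$ to approximate any such pair $(x,y)$ by pairs in the graph of $A+B$. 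Your Yosida idea is plausible but would need to be written out carefully, since $n(n-B)^{-1}$ need not converge strongly to the identity on all of $X$ when $B$ is not densely defined.
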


\begin{remark}
Note that, because of \eqref{pq}, when $D(B)$ is dense in $X$ then \eqref{cond} is automatically satisfied and the above result has already been proved by Ducrot and Magal \cite{DM-TAMS} and Thieme \cite{Thieme08} in a more general framework. We also refer to Thieme \cite{Thieme97} for results in the same direction with Hille-Yosida operators.
Let us also notice that when $\overline{D(A)}+\overline{D(B)}$ is dense in $X$ then \eqref{cond} is satisfied. This point will be used latter to consider our application to age structured problem with diffusion, namely \eqref{eq-lin}.
\end{remark}

\begin{remark}
From the above theorem, one may observe (see ${\rm (ii)}$) that
\begin{equation*}
\omega_0\left(\left(\overline{A+B}\right)_0\right)\leq \omega_0 \left(A_{\overline{D(A)\cap D(B)}}\right) + \omega_0\left(B_{\overline{D(A)\cap D(B)}}\right)\leq \omega_0\left(A_0\right)+\omega_0\left(B_0\right),
\end{equation*}
wherein $A_{\overline{D(A)\cap D(B)}}$ and $B_{\overline{D(A)\cap D(B)}}$ denote respectively the part of $A$ and $B$ in $\overline{D(A)\cap D(B)}$. Hence we obtain that 
$$
\left(\omega_0\left(A_0\right)+\omega_0\left(B_0\right),\infty\right)\subset \left(\omega_0\left(A_{\overline{D(A)\cap D(B)}}\right)+\omega_0\left(B_{\overline{D(A)\cap D(B)}}\right),\infty\right),
$$
and, since $\rho\left(\overline{A+B}\right)=\rho\left(\left(\overline{A+B}\right)_0\right)$ (see Lemma 2.1 in \cite{Magal-Ruan09a}), one has
$$
\left(\omega_0\left(A_{\overline{D(A)\cap D(B)}}\right)+\omega_0\left(B_{\overline{D(A)\cap D(B)}}\right),\infty\right)\subset \rho\left(\overline{A+B}\right).
$$
As a special case, if $\omega_0\left(A_0\right)+\omega_0\left(B_0\right)<0$ then $0\in \rho\left(\overline{A+B}\right)$ and for any $y\in X$, the equation $-\left(A+B\right)x=y$ has a unique weak solution $x\in \overline{D(A)\cap D(B)}$, in the sense of Definition \ref{DEF-sol} below.  
\end{remark}

\begin{remark}\label{REM-res}
From our proof given below (see Lemma \ref{LE-estiR}), we shall obtain the following estimate for the resolvent of $\overline{A+B}$ 
\begin{equation*}
\left\|\left(\lambda-\overline{A+B}\right)^{-1}\right\|_{\mathcal L(X)}=O\left(\frac{1}{\lambda^{\gamma}}\right)\quad\text{as}\quad \lambda\to \infty,
\end{equation*}
for any $0<\gamma<\frac{1}{p}-\frac{1}{q^*}$.
\end{remark}

As a special case of Theorem \ref{THEO_sum} we obtain the following corollary.
\begin{corollary}
Let Assumption \ref{ASS1-C} be satisfied. Assume furthermore that
\begin{itemize}
\item[{\rm (i)}] There exists $\lambda_0\in\R$ such that for all $\lambda\geq \lambda_0$, one has
\begin{equation*}
{\rm R}\left(\lambda-\left(A+B\right)\right)\text{ is dense in }X.
\end{equation*}
\item[{\rm (ii)}] The strongly continuous semigroup 
$$\left\{T_{A_0}(t)_{|\overline{D(A)\cap D(B)}}\circ T_{B_0}(t)_{|\overline{D(A)\cap D(B)}}\right\}_{t\geq 0}\subset \mathcal L\left( \overline{D(A)\cap D(B)}\right)$$ 
is analytic.
\end{itemize}
Then, $A+B$ is closable and its closure, denoted by $\overline{A+B}:D\left(\overline{A+B}\right)\subset X\to X$, satisfies Assumption \ref{ASS1-B} with $\frac{1}{p^*}$ replaced by $\gamma\in \left(0,\frac{1}{p}-\frac{1}{q^*}\right)$. In particular the linear operator $\overline{A+B}$ is $\gamma$-almost sectorial for any $\gamma\in \left(0,\frac{1}{p}-\frac{1}{q^*}\right)$.
\end{corollary}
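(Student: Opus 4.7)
The proof is essentially a matter of assembling pieces already provided in Theorem \ref{THEO_sum} together with the resolvent estimate advertised in Remark \ref{REM-res}. I would argue as follows.

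First, I would invoke Theorem \ref{THEO_sum}(b), which applies precisely because assumption (i) of the corollary is the density condition \eqref{cond}. This immediately yields the closability of $A+B$, the existence of the closed extension $\overline{A+B}$, and the identifications
\begin{equation*}
\overline{D\!\left(\overline{A+B}\right)} = \overline{D(A)\cap D(B)}, \qquad T_{\left(\overline{A+B}\right)_0}(t) = T_{A_0}(t)_{|\overline{D(A)\cap D(B)}}\circ T_{B_0}(t)_{|\overline{D(A)\cap D(B)}}.
\end{equation*}
In particular $\overline{A+B}$ already satisfies Assumption \ref{ASS1}, and its part $\left(\overline{A+B}\right)_0$ in $\overline{D(A)\cap D(B)}$ is the infinitesimal generator of the product strongly continuous semigroup displayed above.

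Second, I would verify the two items in Assumption \ref{ASS-analytic} for $\overline{A+B}$, with the role of $1/p^*$ played by $\gamma$. Item (a): by the representation of $T_{\left(\overline{A+B}\right)_0}(t)$ recalled in the previous paragraph and by assumption (ii) of the corollary, the semigroup generated by $\left(\overline{A+B}\right)_0$ on $\overline{D\!\left(\overline{A+B}\right)}=\overline{D(A)\cap D(B)}$ is analytic. Item (b): one has $(\omega,\infty)\subset \rho\!\left(\overline{A+B}\right)$ for $\omega$ large enough (from Theorem \ref{THEO_sum}(b) together with the identity $\rho\!\left(\overline{A+B}\right)=\rho\!\left((\overline{A+B})_0\right)$), and Remark \ref{REM-res} (which in turn follows from Lemma \ref{LE-estiR}) provides exactly the resolvent decay
\begin{equation*}
\left\|\left(\lambda-\overline{A+B}\right)^{-1}\right\|_{\mathcal L(X)} = O\!\left(\lambda^{-\gamma}\right)\quad\text{as }\lambda\to\infty,
\end{equation*}
for every $\gamma\in\left(0,\tfrac{1}{p}-\tfrac{1}{q^*}\right)$. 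Note that this window is nonempty precisely because of Assumption \ref{ASS1-C}, and that any such $\gamma$ lies in $(0,1)$, so the exponent is legitimate for the almost sectorial framework.

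Combining items (a) and (b), Assumption \ref{ASS1-B} holds for $\overline{A+B}$ with $1/p^*$ replaced by $\gamma$; invoking then the equivalence recalled after Assumption \ref{ASS-analytic} (Proposition 3.3 of \cite{Ducrot-Magal-Prevost09}), the operator $\overline{A+B}$ is $\gamma$-almost sectorial on $X$ in the sense of \eqref{DEF-AS}, which is the claim. The only non-routine ingredient is the quantitative resolvent estimate of Remark \ref{REM-res}; once it is granted, the argument is structural and uses nothing more than the identifications supplied by Theorem \ref{THEO_sum}(b) and the product-semigroup analyticity hypothesis.
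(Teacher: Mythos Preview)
Your proposal is correct and follows essentially the same approach as the paper: the paper's own argument is the single sentence ``The above corollary directly follows from Remark \ref{REM-res} coupled with Proposition 3.3 in \cite{Ducrot-Magal-Prevost09}'', and what you have written is precisely an explicit unpacking of that sentence, making the implicit appeal to Theorem \ref{THEO_sum}(b) (for closability and the identification of the part semigroup) and to hypothesis (ii) (for analyticity) visible.
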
 

The above corollary directly follows from Remark \ref{REM-res} coupled with Proposition 3.3 in \cite{Ducrot-Magal-Prevost09}.

A more specific situation is concerned with the commutative sum of two operators satisfying both Assumption \ref{ASS1-B}. This result is presented in the next corollary.
\begin{corollary}
Assume that both linear operators $A$ and $B$ satisfy Assumption \ref{ASS1-B} respectively with exponent $p_A^\ast\in [1,\infty)$ and $p_B^\ast \in [1,\infty)$.
Then, if there exists $\lambda_0\in\R$ such that, for all $\lambda\geq \lambda_0$, one has
\begin{equation*}
{\rm R}\left(\lambda-\left(A+B\right)\right)\text{ dense in }X,
\end{equation*}
and
\begin{equation*}
\frac{1}{p_A^\ast}+\frac{1}{p_B^\ast}>1,
\end{equation*}
then, $A+B:D(A)\cap D(B)\subset X\to X$ is closable and its closure, denoted by $\overline{A+B}:D\left(\overline{A+B}\right)\subset X\to X$, also satisfies Assumption \ref{ASS1-B} with any exponent $p^*\in (1,\infty)$ such that 
$$
0<\frac{1}{p^\ast}< \frac{1}{p_A^\ast}+\frac{1}{p_B^\ast}-1\leq 1.
$$
\end{corollary}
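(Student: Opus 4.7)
The plan is to reduce the statement to the preceding corollary by viewing $A$ through Assumption \ref{ASS1-A} (which it satisfies, by Remark \ref{REM-B}, for any $p>p_A^*$) while keeping $B$ in the form of Assumption \ref{ASS1-B} with exponent $p_B^*$. Given a target $p^*$ with $0<\frac{1}{p^*}<\frac{1}{p_A^*}+\frac{1}{p_B^*}-1$, one has $\frac{1}{p^*}+\frac{1}{q_B^*}<\frac{1}{p_A^*}$, so I choose $p>p_A^*$ close enough to $p_A^*$ that
\[
\frac{1}{p}>\frac{1}{p^*}+\frac{1}{q_B^*}.
\]
In particular this ensures $\frac{1}{p}+\frac{1}{p_B^*}>1+\frac{1}{p^*}>1$, i.e.\ Assumption \ref{ASS1-C} holds for the pair $(A,B)$ with the parameters $(p,p_B^*)$ and the density hypothesis \eqref{cond} is exactly the assumption imposed in the statement.

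Before invoking the preceding corollary I still need to verify its analyticity hypothesis, namely that the product semigroup $T_{A_0}(t)_{|\overline{D(A)\cap D(B)}}\circ T_{B_0}(t)_{|\overline{D(A)\cap D(B)}}$ is analytic. Both $T_{A_0}$ and $T_{B_0}$ are analytic semigroups, hence each extends holomorphically to a common sector $\Sigma_\theta$ around $(0,\infty)$. The resolvent commutation \eqref{hyp-commute} translates to $T_{A_0}(t)T_{B_0}(s)=T_{B_0}(s)T_{A_0}(t)$ on $\overline{D(A)\cap D(B)}$ for every $s,t\geq 0$, and analytic continuation upgrades this to a commuting holomorphic family on $\Sigma_\theta\times\Sigma_\theta$. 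Consequently $z\mapsto T_{A_0}(z)T_{B_0}(z)$ is a holomorphic semigroup on $\Sigma_\theta$, establishing the required analyticity.

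With these two ingredients at hand, the preceding corollary applies and yields that $A+B$ is closable and that its closure $\overline{A+B}$ is $\gamma$-almost sectorial for every $\gamma\in\bigl(0,\frac{1}{p}-\frac{1}{q_B^*}\bigr)$. By the choice of $p$ this interval contains $\frac{1}{p^*}$, so $\overline{A+B}$ is $\frac{1}{p^*}$-almost sectorial. Invoking Proposition 3.3 of \cite{Ducrot-Magal-Prevost09} once more, this is equivalent to $\overline{A+B}$ satisfying Assumption \ref{ASS1-B} with the prescribed exponent $p^*$, which is the desired conclusion.

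The only real subtlety is the tuning of $p$: the hypothesis $\frac{1}{p_A^*}+\frac{1}{p_B^*}>1$ supplies precisely enough slack to push $p$ arbitrarily close to $p_A^*$ from above while still recovering any $p^*$ with $\frac{1}{p^*}<\frac{1}{p_A^*}+\frac{1}{p_B^*}-1$ via the range of the preceding corollary. The analyticity of the product semigroup is standard for resolvent commuting analytic semigroups and does not present a genuine obstacle.
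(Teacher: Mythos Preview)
Your proof is correct and follows the route the paper intends: the second corollary is stated without proof, as a ``more specific situation'' of the first, and your reduction via Remark~\ref{REM-B} (to equip $A$ with Assumption~\ref{ASS1-A} for any $p>p_A^\ast$) together with the tuning of $p$ to capture the prescribed $p^\ast$ is exactly the natural argument. Your verification of the analyticity of the product semigroup on $\overline{D(A)\cap D(B)}$ is the one ingredient the paper leaves entirely to the reader, and your treatment of it (commuting analytic extensions, hence a holomorphic product semigroup on a common sector) is standard and sound.
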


\section{Proof of Theorem \ref{THEO_sum}}

This section is devoted to the proof of Theorem \ref{THEO_sum}. To that aim we closely follow the construction provided by Ducrot and Magal in \cite{DM-TAMS} to handle the closability of the commutative sum of two linear operators, one satisfying Assumption \ref{ASS1-A} while the other is the generator of a strongly continuous semigroup.
Here we extend this methodology to take care of the additional difficulty coming from the fact that the linear operator $B$ is, as $A$, also not densely-defined and that the derivative semigroup $T_B(t)=\frac{\d S_B}{\d t}(t)$ admits a singularity at $t=0$. Here we shall crucially make use the fact that such a singularity is of polynomial type and thus belongs to a suitable Lebesgue space.

To perform our analysis and prove Theorem \ref{THEO_sum}, we shall make use of exponentially weighted Lebesgue and Sobolev spaces. More specifically, for any $p\in [1,\infty)$ and $\eta\in \R$ we define the Banach space $L^p_\eta(0,\infty;X)$ by
\begin{equation*}
L^p_\eta(0,\infty;X)=\left\{\psi\in L^1_{\rm loc}([0,\infty);X):\; t\mapsto e^{\eta t}\psi(t)\in L^p(0,\infty;X)\right\},
\end{equation*}
endowed with the weighted norm, denoted by $\|\cdot\|_{L^q_\eta(0,\infty;X)}$, and given by
\begin{equation*}
\|\psi\|_{L^p_\eta(0,\infty;X)}=\| e^{\eta \cdot}\psi\|_{L^p(0,\infty;X)},\quad\forall \psi\in L^p_\eta(0,\infty;X).
\end{equation*}
We also introduce the associated weighted Sobolev spaces $W^{1,p}_\eta(0,\infty;X)$ given by
\begin{equation*}
W^{1,p}_\eta(0,\infty;X)=\left\{\psi\in W^{1,p}_{\rm loc}([0,\infty);X):\; \left(\psi,\psi'\right)\in L^p_\eta\hspace{-0.05cm}\left(0,\infty;X\right)^2\hspace{-0.05cm}\right\}\hspace{-0.05cm}.
\end{equation*}
Before proving Theorem \ref{THEO_sum} we need to prove several preliminary lemmas and results. 
Our first lemma reads as follows.
\begin{lemma}\label{LE4.1}
Let Assumption \ref{ASS1-A} be satisfied. Recalling that $\omega_A$, $M_A$ and $p$ are defined in Assumption \ref{ASS1-A}, let $\eta>\max\left(0,\omega_A\right)$ be given. Then, for each $f\in L_{\eta}^p(0,\infty; X)$ the following limit exists in $\overline{D(A)}$
$$
\lim_{t\to\infty}\left(S_A\diamond f(t-\cdot)\right)(t).
$$
Moreover the operator $\mathbb{K}_A:L_{\eta}^p\left(0,\infty; X \right)\to \overline{D(A)}$ defined by
$$
\mathbb{K}_A(f)=\lim_{t\to\infty}\left(S_A\diamond f(t-\cdot)\right)(t),
$$
is a bounded linear operator and satisfies the following estimate
\begin{equation*}
\left\|\mathbb{K}_A\right\|_{\mathcal L\left(L_{\eta}^p\left(0,\infty; X \right),\overline{D(A)}\right)}\leq \dfrac{M_A^{1+\frac{1}{p}}}{1-e^{\left(\omega_A-\eta \right)}} =: \widetilde{M}_A\left(\eta \right).
\end{equation*}
\end{lemma}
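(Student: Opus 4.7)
Denote $\phi(t):=(S_A\diamond f(t-\cdot))(t)$; this is well-defined for each $t\geq 0$ by Lemma \ref{LE2.3} applied to the auxiliary function $s\mapsto f(t-s)\in L^p(0,t;X)$. The plan is to proceed in three steps: derive an identity for the increment $\phi(t+h)-\phi(t)$ via the constant variation formula \eqref{constant-varf}, use it to show $\phi$ is Cauchy as $t\to\infty$, and finally combine a telescoping decomposition at integer times with the exponential weight to recover the announced norm bound.

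The key identity comes from applying \eqref{constant-varf} to $g(u):=f((t+h)-u)$ with split point $s=h$. Since $g(\cdot+h)(u)=f(t-u)$, one has $(S_A\diamond g(\cdot+h))(t)=\phi(t)$, and so
\[
\phi(t+h)-\phi(t)=T_{A_0}(t)\,\bigl(S_A\diamond f((t+h)-\cdot)\bigr)(h).
\]
Bounding $\|T_{A_0}(t)\|\leq M_A e^{\omega_A t}$, applying the $L^p$-estimate of Assumption \ref{ASS1-A} to $(S_A\diamond g)(h)$, and changing variables $u=(t+h)-s$ inside the resulting integral makes the factors $e^{\pm\omega_A t}$ cancel and yields
\[
\|\phi(t+h)-\phi(t)\|\leq M_A^{2}\Bigl(\int_t^{t+h}e^{p\omega_A u}\|f(u)\|^p\,du\Bigr)^{1/p}.
\]

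Because $\eta>\omega_A$, the integrand satisfies $e^{p\omega_A u}\|f(u)\|^p\leq e^{p\eta u}\|f(u)\|^p$ and is thus globally integrable on $[0,\infty)$ with total mass at most $\|f\|_{L^p_\eta}^p$; its tail therefore vanishes as $t\to\infty$, so $\phi$ is Cauchy in $X$. We set $\mathbb{K}_A(f):=\lim_{t\to\infty}\phi(t)$; since each $\phi(t)$ lies in the closed subspace $\overline{D(A)}$ (by Lemma \ref{LE2.3}), so does the limit. Linearity of $\mathbb{K}_A$ is inherited from that of the derivation-convolution operator.

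For the norm estimate I would telescope at integer times: $\phi(n)=\sum_{k=0}^{n-1}(\phi(k+1)-\phi(k))$, with each increment bounded above by $M_A^{2}\bigl(\int_k^{k+1}e^{p\omega_A u}\|f(u)\|^p\,du\bigr)^{1/p}$. On $[k,k+1]$ the decreasing factor $e^{p(\omega_A-\eta)u}$ is controlled by $e^{p(\omega_A-\eta)k}$, so each increment is at most $M_A^{2}e^{(\omega_A-\eta)k}\|f\|_{L^p_\eta}$; summing the resulting geometric series produces the factor $1/(1-e^{\omega_A-\eta})$, and letting $n\to\infty$ (the extension from integer $n$ to arbitrary $t$ being handled by absorbing the last-subinterval contribution into the same sum) gives a bound of the form $M_A^{2}(1-e^{\omega_A-\eta})^{-1}\|f\|_{L^p_\eta}$. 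The remaining obstacle is purely bookkeeping: recovering the sharper constant $M_A^{1+1/p}$ in place of the crude $M_A^{2}$ requires distributing the two invocations of the weak Hille-Yosida property (one on $T_{A_0}$, one inside the $L^p$-estimate of Assumption \ref{ASS1-A}) in a $p$-dependent way rather than applying each estimate at full strength.
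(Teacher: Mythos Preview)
Your proof is correct and follows essentially the same route as the paper: the same increment identity from the constant-variation formula \eqref{constant-varf}, the same Cauchy argument, and the same telescoping sum at integer times to extract the geometric factor $(1-e^{\omega_A-\eta})^{-1}$. Your observation that the factors $e^{\pm\omega_A t}$ cancel after the change of variables is in fact a slightly cleaner way to reach the increment bound than the paper's version.

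On the constant: there is no hidden ``$p$-dependent distribution'' trick. The paper simply writes the $L^p$-estimate of Assumption~\ref{ASS1-A} with the constant \emph{inside} the $p$-th root, i.e.\ as $\bigl(M_A\int_0^h e^{p\omega_A(h-l)}\|f(t+h-l)\|^p\,dl\bigr)^{1/p}$, which produces $M_A^{1/p}$ for that factor and hence $M_A^{1+1/p}$ overall. With the assumption written as you (and the paper's statement of Assumption~\ref{ASS1-A}) have it---$M_A$ outside the root---your constant $M_A^2$ is what one obtains. The discrepancy is purely cosmetic and does not affect any subsequent use of the lemma.
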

 
\begin{proof}
Fix $\eta >\max(0,\omega_A)$ and $f\in L_{\eta}^p(0,\infty; X)$. Next in order to prove the above result, let us show that the function $t\mapsto \left(S_A\diamond f(t-\cdot)\right)(t)$ satisfies the Cauchy criterion as $t\to\infty$.
To that aim, let us first observe that due to the constant variation formula \eqref{constant-varf}, for all $0\leq s\leq t$, one has
\begin{equation*}
(S_A\diamond f)(t)=T_{A_0}(t-s)(S_A\diamond f)(s)+(S_A\diamond f(s+\cdot))(t-s).
\end{equation*}
Hence one gets, for all $t>0$ and $h>0$,
\begin{equation*}
\begin{split}
(S_A\diamond f(t+h-.))(t+h) \;=\; &T_{A_0}(t+h-h)(S_A\diamond f(t+h-\cdot))(h) \\ 
&+ (S_A\diamond f(t+h-h-.))(t+h-h),
\end{split}
\end{equation*}
so that, for any $t>0$ and $h>0$, one has
\begin{equation}\label{esti1}
\begin{split}
(S_A\diamond f(t+h-.))(t+h)-(S_A\diamond f(t&-\cdot))(t)\\
&= T_{A_0}(t)(S_A\diamond f(t+h-\cdot))(h).
\end{split}
\end{equation}
Now, for any $t>0$ and $h>0$, one has
\begin{equation*}
\|T_{A_0}(t)(S_A\diamond f(t+h-\cdot))(h)\|\leq \|T_{A_0}(t)\|_{\mathcal L(\overline{D(A)})}\|(S_A\diamond f(t+h-\cdot))(h)\|,
\end{equation*}
Next, since the strongly continuous semigroup $\{T_{A_0}(t)\}_{t\geq 0}$ satisfies
\begin{equation*}
\|T_{A_0}(t)\|_{\mathcal L(\overline{D(A)})}\leq M_A e^{\omega_A t},\quad\forall t\geq 0,
\end{equation*}
the estimate in Assumption \ref{ASS1-A} leads us to
\begin{equation*}
\begin{split}
&\|T_{A_0}(t)(S_A\diamond f(t+h-\cdot))(h)\|\\
& \leq M_{A} e^{\omega_{A}t}\left(M_{A} \int_0^h e^{p\omega_A(h-l)} \|f(t+h-l)\|^p \d l\right)^{1/p} \\
& \leq M_{A}^{1+\frac{1}{p}} e^{\omega_{A}t} \left( \int_0^h e^{p\omega_A(h-l)} e^{-p\eta(t+h-l)} e^{p\eta(t+h-l)}\|f(t+h-l)\|^p \d l\right)^{1/p},
\end{split}
\end{equation*}
hence
\begin{equation*}
\begin{split}
&\|T_{A_0}(t)(S_A\diamond f(t+h-\cdot))(h)\|\\
&\leq M_{A}^{1+\frac{1}{p}} e^{\left(\omega_{A}-\eta\right)t} \left( \int_0^h e^{p(\omega_A - \eta)(h-l)}  e^{p\eta(t+h-l)}\|f(t+h-l)\|^p \d l\right)^{1/p}.
\end{split}
\end{equation*}
Recalling that $\eta>\max(0,\omega_A)$, we infer from \eqref{esti1} and the above inequality that, for all $t>0$ and $h>0$,
\begin{equation*}
\begin{split}
&\left\|(S_A \diamond f(t+h-\cdot))(t+h)-(S_A\diamond f(t-\cdot))(t)\right\| = \|T_{A_0}(t)(S_A\diamond f(t+h-\cdot))(h)\| \\
&\leq M_{A}^{1+\frac{1}{p}} e^{(\omega_{A}-\eta) t} \left( \int_0^h  e^{p\eta(t+h-l)}\|f(t+h-l)\|^p \d l\right)^{\frac{1}{p}} \\
&\leq M_{A}^{1+\frac{1}{p}}e^{(\omega_{A}-\eta)t} \left(\int_t^{t+h} e^{p\eta \sigma}\|f(\sigma)\|^p \d\sigma\right)^{\frac{1}{p}} \leq M_{A}^{1+\frac{1}{p}} e^{(\omega_{A}-\eta)t} \left\|f\right\|_{L^p_\eta(0,\infty;X)}.
\end{split}
\end{equation*}
Since $\omega_A-\eta<0$, the above estimate ensures that the expect limit do exist.

Now we derive the estimate for $\mathbb{K}_A$.
To do, note that since $\omega_A-\eta<0$ and $(S_A \diamond f(0-\cdot))(0)=0$,  it follows that for each $k\in\mathbb N$ and $t \in [k,k+1]$ one has
$$
\begin{array}{ll}
\left\|(S_A \diamond f(t-\cdot))(t)\right\|& \leq  \left\|(S_A \diamond f(t-\cdot))(t)-(S_A\diamond f(k-\cdot))(k)\right\| \\
&+ \left\|(S_A \diamond f(k-\cdot))(k)-(S_A\diamond f((k-1)-\cdot))(k-1)\right\|\\
&\vdots  \\
&+\left\|(S_A \diamond f(1-\cdot))(1)-(S_A \diamond f(0-\cdot))(0)\right\|  \\
&\leq M_{A}^{1+\frac{1}{p}} \left\|f\right\|_{L^p_\eta(0,\infty;X)} \left[e^{(\omega_{A}-\eta)k}+e^{(\omega_{A}-\eta)(k-1)}+\ldots+1 \right]\\
&\leq \frac{M_{A}^{1+\frac{1}{p}}}{1-e^{\omega_A-\eta}} \left\|f\right\|_{L^p_\eta(0,\infty;X)},
\end{array}
$$
and the result follows. 
\end{proof}

By using the approximation formula (see Magal and Ruan \cite{Magal-Ruan07})
$$
\frac{\d }{\d t}(S_A\ast f)(t)= \lim_{\lambda \to \infty} \int_0^t T_{A_0}(t-s) \lambda(\lambda-A)^{-1}f(s)\d s,\quad\forall t\geq 0,
$$
we deduce that for each $\tau>0$, $f\in L^p(0,\tau;X)$ and $\delta\in\R$ \begin{equation}\label{KA-delta (f) = KA(e-delta f)}
\left(S_{A-\delta}\diamond f\left(t-. \right)\right)(t)=\left(S_A\diamond \left(e^{-\delta\cdot}f\right) \left(t-. \right)\right)(t),\quad\forall t\in [0,\tau].
\end{equation}
The above formula coupled with Lemma \ref{LE4.1} yields the following lemma.
\begin{lemma}\label{LE3.5}
Let $\delta\in\R$ and $\eta>\max(0,\omega_A)$ be given. Then, for all function $f\in L^p_{\eta-\delta}(0,\infty;X)$, the limit 
$$
\mathbb{K}_{A-\delta}(f):=\lim_{t\to\infty}\left(S_{A-\delta}\diamond f \left(t-. \right)\right)(t)\text{ exists in }\overline{D(A)}, 
$$
and for all $f\in L^p_{\eta-\delta}(0,\infty;X)$ one has
$$
\mathbb{K}_{A-\delta}(f)=\mathbb{K}_A\left(e^{-\delta\cdot}f \right),
$$ 
and
\begin{equation*}
\left\|\mathbb{K}_{A-\delta}(f)\right\|\leq \widetilde{M}_A\left(\eta \right) \|f\|_{L_{\eta-\delta}^p\left(0,\infty; X \right)}.
\end{equation*}
\end{lemma}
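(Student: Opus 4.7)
The plan is to reduce the statement to Lemma \ref{LE4.1} by means of the shift identity \eqref{KA-delta (f) = KA(e-delta f)}. I would first introduce the rescaled source term $g(s) := e^{-\delta s} f(s)$ for $s \geq 0$ and observe, by a direct computation of the weighted $L^p$-norm, the isometry
$$\|g\|_{L^p_\eta(0,\infty;X)}^p = \int_0^\infty e^{p\eta s} e^{-p\delta s} \|f(s)\|^p \,\d s = \|f\|_{L^p_{\eta-\delta}(0,\infty;X)}^p.$$
Hence $g \in L^p_\eta(0,\infty;X)$ if and only if $f \in L^p_{\eta-\delta}(0,\infty;X)$, with equal norms. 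Since $\eta > \max(0,\omega_A)$, the hypotheses of Lemma \ref{LE4.1} are met for $g$, and that lemma gives existence of the limit $\mathbb{K}_A(g)\in\overline{D(A)}$ together with the bound $\|\mathbb{K}_A(g)\|\leq \widetilde M_A(\eta)\,\|g\|_{L^p_\eta(0,\infty;X)}$.

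Combining this with the identity \eqref{KA-delta (f) = KA(e-delta f)}, which reads for every $t\geq 0$
$$\bigl(S_{A-\delta} \diamond f(t-\cdot)\bigr)(t) = \bigl(S_A \diamond g(t-\cdot)\bigr)(t),$$
the convergence as $t \to \infty$ of the right-hand side in $\overline{D(A)}$ transfers immediately to the left-hand side. This yields the existence of the limit in $\overline{D(A)}$, the identification $\mathbb{K}_{A-\delta}(f) = \mathbb{K}_A(e^{-\delta \cdot} f)$, and, through the isometry above combined with Lemma \ref{LE4.1}, the estimate
$$\|\mathbb{K}_{A-\delta}(f)\| \leq \widetilde M_A(\eta)\,\|g\|_{L^p_\eta(0,\infty;X)} = \widetilde M_A(\eta)\,\|f\|_{L^p_{\eta-\delta}(0,\infty;X)}.$$
There is no genuine obstacle: the lemma is essentially a bookkeeping statement expressing that the construction of Lemma \ref{LE4.1} is compatible with the exponential rescaling $A \mapsto A - \delta$, and this compatibility is exactly what \eqref{KA-delta (f) = KA(e-delta f)} records at the level of the derivation--convolution operator.
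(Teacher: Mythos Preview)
Your proof is correct and follows exactly the same approach as the paper: reduce to Lemma~\ref{LE4.1} via the shift identity \eqref{KA-delta (f) = KA(e-delta f)} after observing the equivalence (in fact isometry) $f\in L^p_{\eta-\delta}(0,\infty;X)\Longleftrightarrow e^{-\delta\cdot}f\in L^p_\eta(0,\infty;X)$. The paper's proof is a one-line sketch of precisely this argument; you have simply written out the details.
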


\begin{proof}
The proof of this lemma directly follows from the application of Lemma \ref{LE4.1} together with \eqref{KA-delta (f) = KA(e-delta f)} by noticing that
\begin{equation*}
f\in L^p_{\eta-\delta}(0,\infty;X)\;\Longleftrightarrow\;e^{-\delta\cdot}f\in L^p_\eta(0,\infty;X),
\end{equation*}
\end{proof}

Our next lemma reads as follows.
\begin{lemma}\label{LE4.2}
Let $\eta>\max\left(0,\omega_A\right)$ be given. Then, for each $f\in W^{1,p}_\eta(0,\infty;X)$, one has
\begin{equation*}
\mathbb{K}_A(f)\in D(A).
\end{equation*}
Moreover, the following identity holds true 
\begin{equation*}
\mathbb{K}_{A}(f^{\prime })+ A\mathbb{K}_{A}(f) + f(0) = 0,
\end{equation*}
\end{lemma}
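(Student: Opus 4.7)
The plan is to evaluate $(\lambda-A)^{-1}\mathbb{K}_A(f)$ as a Bochner integral and then peel off the resolvent $(\lambda-A)^{-1}$ by combining an integration by parts with closedness of $A_0$. Fixing $\lambda>\omega_A$, the last formula of Lemma~\ref{LE2.3} applied to $g(\cdot)=f(T-\cdot)$ together with the change of variable $\sigma=T-s$ gives, for every $T>0$,
\[
(\lambda-A_0)^{-1}\bigl(S_A \diamond f(T-\cdot)\bigr)(T) \;=\; \int_0^T T_{A_0}(\sigma)(\lambda-A)^{-1}f(\sigma)\,\d\sigma.
\]
Since $\mathbb{K}_A(f)\in\overline{D(A)}$ and $(\lambda-A)^{-1}$ coincides with $(\lambda-A_0)^{-1}$ on that subspace, passing to $T\to\infty$ and invoking Lemma~\ref{LE4.1} delivers
\[
(\lambda-A)^{-1}\mathbb{K}_A(f) \;=\; \int_0^\infty T_{A_0}(\sigma)\phi(\sigma)\,\d\sigma, \qquad \phi:=(\lambda-A)^{-1}f \in W^{1,p}_\eta(0,\infty;\overline{D(A)}),
\]
with absolute convergence following from H\"older's inequality, the bound $\|T_{A_0}(\sigma)\|\le M_A e^{\omega_A\sigma}$, and $\omega_A<\eta$. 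The same identity applied to $f'\in L^p_\eta$ will be reused below.

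The crux is then to establish the integration-by-parts identity
\[
A_0\int_0^T T_{A_0}(\sigma)\phi(\sigma)\,\d\sigma \;=\; T_{A_0}(T)\phi(T) - \phi(0) - \int_0^T T_{A_0}(\sigma)\phi'(\sigma)\,\d\sigma.
\]
This is the step where I expect the main obstacle: the naive chain rule $\tfrac{\d}{\d\sigma}T_{A_0}(\sigma)\phi(\sigma) = A_0 T_{A_0}(\sigma)\phi(\sigma) + T_{A_0}(\sigma)\phi'(\sigma)$ is not directly available here, since $\phi(\sigma)=(\lambda-A)^{-1}f(\sigma)$ need not belong to $D(A_0)$; indeed $A\phi(\sigma)=\lambda\phi(\sigma)-f(\sigma)$, and $f(\sigma)$ lies in $X$ but typically not in $\overline{D(A)}$. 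To bypass this I would write $\phi(\sigma)=\phi(0)+\int_0^\sigma\phi'(r)\,\d r$, apply Fubini's theorem to the resulting double integral, and then invoke the elementary identity $A_0\int_r^T T_{A_0}(s) y\,\d s = T_{A_0}(T)y - T_{A_0}(r)y$, which holds for every $y\in\overline{D(A)}$. Closedness of $A_0$ together with the exponential integrability of $T_{A_0}$ then legitimizes bringing $A_0$ inside the outer integral and yields the displayed identity.

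To conclude, I would let $T\to\infty$ in this identity. Since $\psi:=e^{\eta\cdot}\phi$ lies in $W^{1,p}(0,\infty;X)$ it tends to zero at infinity, so
\[
\|T_{A_0}(T)\phi(T)\|\;\le\; M_A e^{(\omega_A-\eta)T}\|\psi(T)\|\;\longrightarrow\; 0.
\]
The remaining integral converges to $(\lambda-A)^{-1}\mathbb{K}_A(f')$ thanks to the identity of the first paragraph applied to $f'\in L^p_\eta$. Closedness of $A_0$ then gives $(\lambda-A)^{-1}\mathbb{K}_A(f)\in D(A_0)$ with
\[
A_0(\lambda-A)^{-1}\mathbb{K}_A(f) \;=\; -(\lambda-A)^{-1}\bigl[f(0)+\mathbb{K}_A(f')\bigr].
\]
Combining this with $A|_{D(A_0)}=A_0$ and $A(\lambda-A)^{-1}=\lambda(\lambda-A)^{-1}-I$, a short rearrangement produces
\[
\mathbb{K}_A(f) \;=\; (\lambda-A)^{-1}\bigl[\lambda\mathbb{K}_A(f)+f(0)+\mathbb{K}_A(f')\bigr],
\]
which simultaneously places $\mathbb{K}_A(f)$ in $D(A)$ and, upon applying $(\lambda-A)$ to both sides, produces the claimed identity $A\mathbb{K}_A(f)+\mathbb{K}_A(f')+f(0)=0$.
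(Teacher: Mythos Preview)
Your argument is correct, but it takes a genuinely different route from the paper's proof. The paper works directly at the level of the integrated semigroup $S_A$: it first shows $S_A(t)f(t)\to 0$ as $t\to\infty$ using the estimate of Assumption~\ref{ASS1-A} on constants, then invokes the identity $(S_A\diamond f)(t)=S_A(t)f(0)+(S_A\ast f')(t)$ for $f\in W^{1,p}_\eta$ to obtain $\mathbb{K}_A(f)=-\lim_{t\to\infty}\int_0^t(S_A\diamond f'(t-\cdot))(s)\,\d s$, and finally passes to the limit in the mild equation $(S_A\diamond f'(t-\cdot))(t)=A\int_0^t(S_A\diamond f'(t-\cdot))(s)\,\d s+f(t)-f(0)$ using closedness of $A$. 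This stays entirely within the integrated-semigroup formalism and never touches $A_0$ or $T_{A_0}$ explicitly.

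Your approach instead applies the resolvent $(\lambda-A)^{-1}$ to land in $\overline{D(A)}$, carries out a classical $C_0$-semigroup integration-by-parts for $T_{A_0}$ (justified by the Fubini/closedness workaround you describe), and then lifts the result back to $D(A)$ by the algebraic manipulation at the end. This is slightly longer but has the advantage of relying only on standard $C_0$-semigroup identities once the single formula from Lemma~\ref{LE2.3} has been used; the paper's proof, by contrast, leans on the specific structure of $S_A$ and its mild equation. Both proofs hinge on the same decay input, namely that $e^{\eta\cdot}f\in W^{1,p}$ forces $f(t)$ (and hence the boundary term) to vanish at infinity.
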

\begin{proof}
Applying Assumption \ref{ASS1-A} with any constant function $f(t)\equiv x$ for $x\in X$ and noticing that in such a case one has: 
\begin{equation*}
\left( S_{A}\diamond f\right) (t)=S_{A}(t)x,
\end{equation*}
one obtains that $S_A(t)$ satisfies
\begin{equation}\label{pm}
\left\Vert S_{A}(t)x\right\Vert \leq M_A\left(\int_0^t e^{p\omega_As}\d s\right)^{1/p}\|x\|,\quad\forall t\geq 0,\;\forall x\in X.
\end{equation}%
Now let $f\in W^{1,p}_{\eta }\left( \left[ 0,\infty \right) ,X\right)$ be given. Then, for all $t\geq 0$, one has 
\begin{equation*}
\|f(t)\|\leq\int_t^\infty \|f'(s)\|\d s\leq \int_t^\infty e^{-\eta s} \left(e^{\eta s}\|f'(s)\|\right)\d s.
\end{equation*}
Next set $q\in (1,\infty]$, the conjugate exponent of $p$, namely $\frac{1}{p}+\frac{1}{q}=1$ and using H\"older inequality yields, for any $t\geq 0$,
\begin{equation*}
\|f(t)\|\leq \left[\int_t^\infty e^{-\eta q s}\d s\right]^{1/q}\| e^{\eta\cdot}f'\|_{L^p(0,\infty;X)} \leq C(\eta,q)e^{-\eta t}\| e^{\eta\cdot}f'\|_{L^p(0,\infty;X)}.
\end{equation*}
Herein the constant $C(\eta,q)$ is given by
$C(\eta,q)=(q\eta)^{-1/q}$.
Next, applying \eqref{pm} with $x=f(t)$, one gets
\begin{equation*}
\|S_A(t)f(t)\|\leq M_A\left[\int_0^t e^{p\left(\omega_A s-\eta t\right)}\d s\right]^{1/p}C(\eta, q)\| e^{\eta\cdot}f'\|_{L^p(0,\infty;X)},\quad\forall t\geq 0.
\end{equation*}
We infer from the condition $\eta> \max(0,\omega_A)$ that 
\begin{equation}\label{rt}
S_A(t)f(t)\to 0 \quad\text{as}\quad t\to\infty.
\end{equation}
On the one hand, let us observe that, since $f\in W^{1,p}_{\eta }\left( \left[0,\infty \right) ,X\right)$, for all $t > 0$, one has 
\begin{equation*}
\left( S_{A}\diamond f\right) (t)=S_{A}(t)f(0)+\left( S_{A}\ast f^{\prime
}\right) (t)=S_{A}(t)f(0)+\int_{0}^{t}\left( S_{A}\diamond f^{\prime
}\right) (s)ds.
\end{equation*}
Thus, this yields 
\begin{eqnarray*}
\left( S_{A}\diamond f(t-.)\right) (t) &=&S_{A}(t)f(t)-\left( S_{A}\ast f^{\prime }(t-.)\right) (t) \\
&=&S_{A}(t)f(t)-\int_{0}^{t}S_{A}(t-s)f^{\prime }(t-s)ds \\
&=&S_{A}(t)f(t)-\int_{0}^{t}\left( S_{A}\diamond f^{\prime }(t-.)\right)(s) ds.
\end{eqnarray*}
The above computation coupled together with \eqref{rt} and the definition of $\mathbb K_A(f)$ in Lemma \ref{LE4.1} yields
\begin{equation*}
\mathbb{K}_{A}(f)=-\lim_{t\rightarrow \infty }\int_{0}^{t}\left(
S_{A}\diamond f^{\prime }(t-.)\right) (s)ds.
\end{equation*}%
On the other hand, note that the map $t\rightarrow \left( S_{A}\diamond f^{\prime
}(t-.)\right) (t)$ satisfies, for any $t\geq 0$, the following equation
\begin{eqnarray*}
\left( S_{A}\diamond f^{\prime }(t-.)\right) (t) &=&A\int_{0}^{t}\left(S_{A}\diamond f^{\prime }(t-.)\right) (s)\d s+\int_{0}^{t}f^{\prime }(t-s)\d s \\
&=&A\int_{0}^{t}\left( S_{A}\diamond f^{\prime }(t-.)\right) (s)\d s+f(t)-f(0).
\end{eqnarray*}
Finally, since $A$ is closed and $f'\in L^p_\eta(0,\infty;X)$, letting $t\to\infty$ and using Lemma~\ref{LE4.1} (with $f$ replaced by $f'$) yields $\mathbb{K}_A(f)\in D(A)$ and the following identity
\begin{equation*}
\mathbb{K}_{A}(f')=-A\mathbb{K}_{A}(f) - f(0).
\end{equation*}
This completes the proof of the result.
\end{proof}

\begin{lemma}\label{LE4.3}
Fix $\eta >\max(0,\omega_A)$. Then, for each $f\in L^p_\eta\left(0,\infty;D(B)\right)$ one has
\begin{equation*}
\mathbb{K}_A(f)\in D(B) \quad \text{and} \quad \mathbb{K}_A(Bf) = B \mathbb{K}_A(f).
\end{equation*}
Here the notation $f\in L^p_\eta\left(0,\infty;D(B)\right)$ is used to mean $f\in L^p_\eta\left(0,\infty;X\right)$, $f(t)\in D(B)$ for almost any $t\geq 0$ and $t \to Bf(t)$ belongs to $L^p_\eta\left(0,\infty;X\right)$.\end{lemma}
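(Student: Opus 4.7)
The plan is to exploit the resolvent commutation \eqref{hyp-commute} to establish that, for every $\mu\in\rho(B)$, the bounded operator $(\mu-B)^{-1}$ commutes with the limit operator $\mathbb{K}_A$, and then to reduce the claim to the elementary pointwise identity $(\mu-B)^{-1}Bf(s) = \mu(\mu-B)^{-1}f(s) - f(s)$, which is valid for a.e.\ $s$ since $f(s)\in D(B)$.

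The first step is to upgrade \eqref{hyp-commute} to the integrated semigroup level. Fix $\lambda>\omega_A$. From \eqref{hyp-commute}, $(\mu-B)^{-1}$ commutes with $(\lambda-A)^{-1}$, and since $D(A)=(\lambda-A)^{-1}X$ one readily deduces $(\mu-B)^{-1}D(A)\subset D(A)$ with $A(\mu-B)^{-1}=(\mu-B)^{-1}A$ on $D(A)$; continuity of $(\mu-B)^{-1}$ then gives $(\mu-B)^{-1}\overline{D(A)}\subset\overline{D(A)}$, and iterating yields $(\mu-B)^{-1}D(A_0)\subset D(A_0)$ with $A_0(\mu-B)^{-1}=(\mu-B)^{-1}A_0$ on $D(A_0)$. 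By the standard commutation of a bounded operator with a $C_0$-semigroup whose generator it commutes with, this implies
\[(\mu-B)^{-1}T_{A_0}(s)=T_{A_0}(s)(\mu-B)^{-1},\qquad\forall s\geq 0,\]
on $\overline{D(A)}$. Plugging this into the defining formula for $S_A(t)$ recalled in Section 2 (and pulling $(\mu-B)^{-1}$ under the Bochner integral) yields $(\mu-B)^{-1}S_A(t)=S_A(t)(\mu-B)^{-1}$ for all $t\geq 0$; hence $(\mu-B)^{-1}$ commutes with both $S_A\ast g$ and $S_A\diamond g$ whenever these are defined. Specializing to $g=f(t-\cdot)$ and passing to the limit $t\to\infty$, the continuity of $(\mu-B)^{-1}$ delivers the key identity
\[(\mu-B)^{-1}\mathbb{K}_A(f)=\mathbb{K}_A\bigl((\mu-B)^{-1}f\bigr),\qquad\forall f\in L^p_\eta(0,\infty;X).\]

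With this in hand, the conclusion is quick. Given $f\in L^p_\eta(0,\infty;D(B))$, both $f$ and $Bf$ lie in $L^p_\eta(0,\infty;X)$, so $\mathbb{K}_A(f)$ and $\mathbb{K}_A(Bf)$ are well defined. Applying $\mathbb{K}_A$ to the pointwise identity $(\mu-B)^{-1}Bf(s)=\mu(\mu-B)^{-1}f(s)-f(s)$ and using the commutation formula above gives
\[(\mu-B)^{-1}\mathbb{K}_A(Bf)=\mu(\mu-B)^{-1}\mathbb{K}_A(f)-\mathbb{K}_A(f),\]
so that $\mathbb{K}_A(f)=(\mu-B)^{-1}\bigl[\mu\mathbb{K}_A(f)-\mathbb{K}_A(Bf)\bigr]\in {\rm R}\bigl((\mu-B)^{-1}\bigr)=D(B)$; applying $(\mu-B)$ to both sides then yields $B\mathbb{K}_A(f)=\mathbb{K}_A(Bf)$. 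The only real subtlety lies in the first step: checking the domain inclusions and the commutation of $(\mu-B)^{-1}$ with $T_{A_0}(s)$ needs care because neither $A$ nor $B$ is densely defined in $X$, but once the inclusion $(\mu-B)^{-1}D(A_0)\subset D(A_0)$ is in place the commutation with $T_{A_0}(s)$ on $\overline{D(A)}$ is routine, and the remainder of the proof is essentially formal.
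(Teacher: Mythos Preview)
Your argument is correct: the resolvent commutation \eqref{hyp-commute} propagates to $T_{A_0}$, $S_A$, $S_A\diamond(\cdot)$ and hence to the limit $\mathbb{K}_A$, after which the conclusion follows from the pointwise identity $(\mu-B)^{-1}Bf=\mu(\mu-B)^{-1}f-f$ together with closedness of $B$. The paper omits the proof and refers to Lemma~4.2 in \cite{DM-TAMS}; your route via the commutation identity $(\mu-B)^{-1}\mathbb{K}_A=\mathbb{K}_A\,(\mu-B)^{-1}$ is exactly the natural one in that spirit.
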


The proof of this lemma is similar to the one of Lemma 4.2 in \cite{DM-TAMS} and thus omitted.

To go further, we introduce more notations related to the linear operator $B$.
According to Assumption \ref{ASS1-B} we denote by $\left\{S_{B}(t)\right\}_{t\geq 0}\subset \mathcal L(X)$ the integrated semigroup generated by $B$. We also consider $B_0$, the part of $B$ in $\overline{D(B)}$, $\{T_{B_0}(t)\}_{t\geq 0}\subset \mathcal L\left(\overline{D(B)}\right)$ the strongly continuous semigroup generated by $B_0$ as well as $\{T_B(t)\}_{t>0}\subset \mathcal L(X)$ the semigroup obtained by differentiating $S_B(t)$ with respect to time, for $t>0$. 

Recall also that for each $\omega>\omega_0(B_0)$ and each $\alpha\in \left(\frac{1}{q^*},1\right)$, there exists some constant $M=M_{\alpha,\omega}>0$ such that
\begin{equation}\label{esti-TB}
\|T_{B}(t)\|_{\mathcal L(X)}\leq \frac{M}{t^\alpha}e^{\omega t},\quad\forall t>0.
\end{equation} 
Now fix $\eta>\max(0,\omega_A)$ and recalling Assumption \ref{ASS1-C}, let us fix $\alpha$ such that
\begin{equation*}
\frac{1}{q^*}<\alpha<\frac{1}{p}.
\end{equation*}
With such a choice of the parameter $\alpha$, observe that for any $x\in X$ and any $\lambda>\eta+\omega_0(B_0)$ the map $t\mapsto e^{-\lambda t}T_{B}(t)x\in L^p_\eta\left(0,\infty;X\right)$.
Hence, in view of Lemma \ref{LE4.1}, let us define for $\lambda>\eta+\omega_0(B_0)$ the map $\mathcal R\left(\lambda,A,B\right)\in \mathcal L(X,\overline{D(A)})$ by
\begin{equation*}
\begin{split}
\mathcal R\left(\lambda,A,B\right) x=&\lim_{t\to \infty}\left[S_A\diamond \left(e^{-\lambda (t-\cdot)}T_{B}(t-\cdot)x\right)\right](t), \quad \forall x\in X\\
&=\mathbb{K}_A\left(e^{-\lambda\cdot}T_{B}(\cdot)x\right).
\end{split}
\end{equation*}
Let us observe that, using Lemma \ref{LE3.5}, the following identities hold 
\begin{equation*}
\mathcal R\left(\lambda,A,B\right)=\mathcal R\left(0,A,B-\lambda\right)=\mathcal R\left(0,A-\lambda,B\right),\quad \forall \lambda>\eta+\omega_0(B_0).
\end{equation*}

Next the following estimate holds true
\begin{lemma}\label{LE-estiR}
For any $\alpha\in\left(\frac{1}{q^*},\frac{1}{p}\right)$, for each $\omega>\omega_0(B_0)$, there exists some constant $M>0$ such that, for any $\lambda>\eta+\omega$, one has
\begin{equation}\label{esti-R}
\left\|\mathcal R\left(\lambda,A,B\right)\right\|_{\mathcal L(X,\overline{D(A)})}\leq \frac{M}{\left(\lambda-(\eta+\omega)\right)^{\frac{1}{p}-\alpha}}.
\end{equation}
In particular one has 
\begin{equation*}
\lim_{\lambda\to \infty} \left\|\mathcal R\left(\lambda,A,B\right)\right\|_{\mathcal L(X,\overline{D(A)})}=0.
\end{equation*}
\end{lemma}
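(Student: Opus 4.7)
The plan is to combine the definition $\mathcal R(\lambda,A,B)x = \mathbb K_A\bigl(e^{-\lambda\cdot}T_B(\cdot)x\bigr)$ with the bound on $\mathbb K_A$ from Lemma \ref{LE4.1} and the polynomial singularity estimate \eqref{esti-TB} for the almost sectorial semigroup $\{T_B(t)\}_{t>0}$. The core task is therefore to estimate the weighted $L^p$-norm $\|e^{-\lambda\cdot}T_B(\cdot)x\|_{L^p_\eta(0,\infty;X)}$ in terms of $\|x\|$ and to check that our hypotheses on $\alpha$ and $\lambda$ make the resulting integral converge both at $t=0$ and at $t=\infty$.

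First I would fix $\alpha\in(1/q^*,1/p)$ and $\omega>\omega_0(B_0)$, and pick up the constant $M=M_{\alpha,\omega}$ from \eqref{esti-TB} so that $\|T_B(t)\|_{\mathcal L(X)}\leq M t^{-\alpha}e^{\omega t}$ for all $t>0$. Then, for $\lambda>\eta+\omega$ and $x\in X$, one has
\begin{equation*}
\|e^{\eta t}e^{-\lambda t}T_B(t)x\| \leq M\,t^{-\alpha}\,e^{-(\lambda-\eta-\omega)t}\,\|x\|,\quad t>0.
\end{equation*}
Raising to the $p$-th power and integrating over $(0,\infty)$ we get
\begin{equation*}
\|e^{-\lambda\cdot}T_B(\cdot)x\|_{L^p_\eta(0,\infty;X)}^p \leq M^p\|x\|^p\int_0^\infty t^{-p\alpha}e^{-p(\lambda-\eta-\omega)t}\,\d t.
\end{equation*}
The exponent condition $\alpha<1/p$ ensures $p\alpha<1$, so the integrand is integrable at $0$; the condition $\lambda>\eta+\omega$ handles infinity. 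By the standard change of variable $s=p(\lambda-\eta-\omega)t$, this integral equals $\Gamma(1-p\alpha)\bigl(p(\lambda-\eta-\omega)\bigr)^{p\alpha-1}$.

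Taking the $p$-th root and absorbing all dimensional constants into a single $M$, one obtains
\begin{equation*}
\|e^{-\lambda\cdot}T_B(\cdot)x\|_{L^p_\eta(0,\infty;X)} \leq \frac{M}{(\lambda-(\eta+\omega))^{1/p-\alpha}}\|x\|.
\end{equation*}
Combining with Lemma \ref{LE4.1} applied to $f=e^{-\lambda\cdot}T_B(\cdot)x\in L^p_\eta(0,\infty;X)$, which gives $\|\mathbb K_A(f)\|_{\overline{D(A)}}\leq \widetilde M_A(\eta)\|f\|_{L^p_\eta(0,\infty;X)}$, produces exactly the claimed estimate \eqref{esti-R} after replacing $M$ by $M\widetilde M_A(\eta)$. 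Since $1/p-\alpha>0$, the right-hand side goes to $0$ as $\lambda\to\infty$, establishing the second assertion.

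The only delicate step is the bookkeeping around the exponent $\alpha$: one must simultaneously have $\alpha>1/q^*$ so that the pointwise bound \eqref{esti-TB} is available, and $\alpha<1/p$ so that the integral $\int_0^\infty t^{-p\alpha}e^{-p(\lambda-\eta-\omega)t}\d t$ converges at $0$. Assumption \ref{ASS1-C}, which is precisely $1/q^*<1/p$, is exactly what makes this interval nonempty, so there is no obstruction. Beyond that, the argument is essentially a direct computation and an invocation of Lemma \ref{LE4.1}.
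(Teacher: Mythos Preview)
Your proof is correct and follows essentially the same route as the paper: apply the bound from Lemma \ref{LE4.1} to $f=e^{-\lambda\cdot}T_B(\cdot)x$, then use the pointwise estimate \eqref{esti-TB} to reduce to the scalar integral $\int_0^\infty t^{-p\alpha}e^{-p(\lambda-\eta-\omega)t}\,\d t$ and scale it out. Your exposition is in fact slightly more detailed than the paper's, since you make explicit the Gamma-function value, the convergence at $0$ and $\infty$, and the role of Assumption \ref{ASS1-C} in ensuring the interval $(1/q^*,1/p)$ is nonempty.
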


\begin{proof}
Fix $\alpha\in\left(\frac{1}{q^*},\frac{1}{p}\right)$ and $\omega>\omega_0(B_0)$ and recall that there exists $M=M_{\alpha,\omega}$ such that \eqref{esti-TB} holds true.
Next using Lemma \ref{LE4.1}, one has for any $x\in X$ and $\lambda>\eta+\omega$,
\begin{equation*}
\left\|\mathcal R(\lambda,A,B)x\right\|\leq \widetilde{M}_A\left(\eta \right) \left\|e^{-\lambda\cdot}T_B(\cdot)x\right\|_{L^p_\eta(0,\infty;X)}.
\end{equation*}
We infer from \eqref{esti-TB} that
\begin{equation*}
\left\|\mathcal R(\lambda,A,B)x\right\|\leq \widetilde{M}_A\left(\eta \right) M\left[\int_0^\infty e^{-p\left(\lambda-(\eta+\omega)\right) t}\frac{1}{t^{\alpha p}} \d t \right]^{\frac{1}{p}}\|x\|.
\end{equation*}
The change of variable $s=\left(\lambda-(\eta+\omega)\right)t$ ensures that estimate \eqref{esti-R} holds true and this completes the proof of the lemma.
\end{proof}

Our next preliminary result reads as follows. 
\begin{lemma}\label{LE-REG}
For all $\lambda>\eta+\omega_0\left(B_0\right)$ one has
\begin{equation*}
\mathcal R(\lambda,A,B) D(B)\subset D(A)\cap D(B),
\end{equation*}
and 
\begin{equation*}
\left(\lambda -(A+B)\right)R\left(\lambda,A,B\right) x=x,\quad\forall x\in D(B).
\end{equation*}
\end{lemma}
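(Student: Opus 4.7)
My plan is to apply Lemma \ref{LE4.2} and Lemma \ref{LE4.3} to the function $f(t) := e^{-\lambda t}T_B(t)x$, since $\mathcal R(\lambda,A,B)x = \mathbb K_A(f)$ by the very definition of $\mathcal R(\lambda,A,B)$. To invoke those lemmas I must verify three things: that $f\in W^{1,p}_\eta(0,\infty;X)$, that $f\in L^p_\eta(0,\infty;D(B))$ with $Bf\in L^p_\eta$, and that the boundary value $f(0)$ equals $x$.

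First I would pick $\alpha\in(1/q^{*},1/p)$, which is permitted by Assumption \ref{ASS1-C}, and $\omega\in(\omega_0(B_0),\lambda-\eta)$. Then \eqref{esti-TB} gives $\|T_B(t)x\|\leq M t^{-\alpha}e^{\omega t}\|x\|$, and since $\alpha p<1$ and $\lambda-\omega>\eta$ the map $t\mapsto e^{\eta t}e^{-\lambda t}t^{-\alpha}e^{\omega t}$ lies in $L^p(0,\infty)$, so $f\in L^p_\eta(0,\infty;X)$. For $x\in D(B)$, the expression $T_B(t)x=(\mu-B_0)T_{B_0}(t)(\mu-B)^{-1}x$ shows that $T_B(\cdot)x$ is classically $C^1$ on $(0,\infty)$ with derivative $B_0 T_B(t)x=B\, T_B(t)x$; the resolvent commutation gives $B(\mu-B)^{-1}x=(\mu-B)^{-1}Bx$ on $D(B)$, whence $B\,T_B(t)x=T_B(t)Bx$. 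Thus $f'(t)=-\lambda f(t)+e^{-\lambda t}T_B(t)Bx$ and exactly the same argument (applied to $Bx\in X$) gives $f'\in L^p_\eta$, so $f\in W^{1,p}_\eta(0,\infty;X)$. The same identity also shows $f(t)\in D(B_0)\subset D(B)$ for $t>0$ and $Bf\in L^p_\eta(0,\infty;X)$.

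For the boundary value, set $y:=(\mu-B)^{-1}x$ for $\mu>\omega_0(B_0)$. Since $x\in D(B)$, the identity $By=\mu y-x$ shows $By\in \overline{D(B)}$, so $y\in D(B_0)$. Strong continuity of $\{T_{B_0}(t)\}$ on $D(B_0)$ yields $T_{B_0}(t)y\to y$ and $B_0T_{B_0}(t)y=T_{B_0}(t)B_0y\to B_0y$ as $t\to 0^+$; consequently $T_B(t)x=(\mu-B_0)T_{B_0}(t)y\to (\mu-B)y=x$, i.e. $f(0)=x$. With this in hand Lemma \ref{LE4.2} produces $\mathcal R(\lambda,A,B)x\in D(A)$ with
\begin{equation*}
A\,\mathcal R(\lambda,A,B)x = -\mathbb K_A(f')-x = \lambda\,\mathcal R(\lambda,A,B)x - \mathcal R(\lambda,A,B)Bx - x,
\end{equation*}
and Lemma \ref{LE4.3} gives $\mathcal R(\lambda,A,B)x\in D(B)$ with $B\,\mathcal R(\lambda,A,B)x=\mathbb K_A(Bf)=\mathcal R(\lambda,A,B)Bx$. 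Adding these two identities cancels the $\mathcal R(\lambda,A,B)Bx$ terms and yields $(\lambda-(A+B))\mathcal R(\lambda,A,B)x=x$. The delicate point is controlling the singularity of $T_B$ at $t=0$ well enough to place $f$ and $f'$ in the same weighted $L^p$ space; this is precisely where the coupling condition $1/p+1/p^{*}>1$, equivalently $1/q^{*}<1/p$, from Assumption \ref{ASS1-C} becomes indispensable, by allowing the choice $\alpha\in(1/q^{*},1/p)$.
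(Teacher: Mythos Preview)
Your proof is correct and follows essentially the same route as the paper: both apply Lemma~\ref{LE4.2} and Lemma~\ref{LE4.3} to $f(t)=e^{-\lambda t}T_B(t)x$ for $x\in D(B)$, after checking the required weighted regularity. The paper invokes \eqref{esti-2nd} directly to place $f$ in $W^{1,p}_\eta(0,\infty;X)$ and defers the algebraic identity $(\lambda-(A+B))\mathcal R(\lambda,A,B)x=x$ to \cite{DM-TAMS}, whereas you re-derive the derivative estimate via the identity $B\,T_B(t)x=T_B(t)Bx$ together with \eqref{esti-TB} and then carry out the final computation explicitly; these are equivalent. One minor remark: for $x\in D(B)\subset\overline{D(B)}$ you actually have $T_B(t)x=T_{B_0}(t)x$, so your use of the singular bound \eqref{esti-TB} for $f$ itself is more than needed (though harmless); the singularity is genuinely required only for $f'$, through $T_B(t)Bx$ with $Bx\in X$.
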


\begin{proof}
To prove the above lemma, let us fix $\lambda>\eta+\omega_0(B_0)$.
Next fix $x\in D(B)$ and let us check that the map 
$$t\mapsto e^{-\lambda t}T_{B}(t)x = e^{-\lambda t}T_{B_0}(t)x$$ 
belongs both to $W^{1,p}_\eta(0,\infty;X)$ and to $L^p_\eta\left(0,\infty;D(B)\right)$.
Note that the first statement follows from \eqref{esti-2nd}. The second statement directly follows from the first one coupled with the following property of the semigroup $T_{B_0}(t)$, that reads as
$$
T_{B_0}(t)x\in D(B_0),\quad\forall t>0\quad\text{and}\quad\frac{\d T_{B_0}(t)x}{\d t}=B_0 T_{B}(t)x,\quad\forall t>0.
$$
Hence (see \eqref{esti-2nd}) $t\mapsto e^{-\lambda t}T_{B_0}(t)x$ belongs to $L^p_\eta(0,\infty;D(B_0))\subset L^p_\eta(0,\infty;D(B))$.
As a consequence of Lemma \ref{LE4.2} and \ref{LE4.3}, one obtains that
$$
\mathcal R\left(\lambda,A,B\right) D(B)\subset D(A)\cap D(B),\quad\forall \lambda>\eta+\omega_0(B_0).
$$
The second statement of the result follows from the same arguments as the ones developed in \cite{DM-TAMS}.
\end{proof}

We continue this section by proving the following proposition.

\begin{proposition}
Let Assumption \ref{ASS1-C} be satisfied. Then there exists $\lambda_0 \in \mathbb{R}$ such that, for all $\lambda \geq \lambda_0$, one has
$$\overline{\overline{D(B)} + \overline{D(A)}} \subset \overline{{\rm R}(\lambda-(A+B))}.$$
\end{proposition}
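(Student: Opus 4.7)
The plan is to prove the two inclusions $\overline{D(B)} \subset \overline{{\rm R}(\lambda-(A+B))}$ and $\overline{D(A)} \subset \overline{{\rm R}(\lambda-(A+B))}$ separately; since $\overline{{\rm R}(\lambda-(A+B))}$ is a closed linear subspace of $X$, it then automatically contains the sum $\overline{D(A)}+\overline{D(B)}$, and hence its closure. The first inclusion is essentially free: for any $x \in D(B)$, Lemma \ref{LE-REG} supplies $\mathcal{R}(\lambda,A,B)x \in D(A)\cap D(B)$ with $(\lambda-(A+B))\mathcal{R}(\lambda,A,B)x = x$, so $D(B) \subset {\rm R}(\lambda-(A+B))$, and passing to the closure gives the claim.

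For the second inclusion, the plan is to run the construction of $\mathcal{R}(\lambda,A,B)$ with the roles of the two operators interchanged. Remark \ref{REM-B} ensures that $B$ itself satisfies Assumption \ref{ASS2} for any $q>p^\ast$; verbatim repetitions of the proofs of Lemmas \ref{LE4.1}--\ref{LE4.3} (which rely only on the integrated semigroup framework, the weak Hille--Yosida estimates, and the commutativity \eqref{hyp-commute}) supply a bounded operator
$$\mathbb{K}_B : L^q_{\eta'}(0,\infty;X) \to \overline{D(B)},\qquad \eta' > \max(0,\omega_B),$$
satisfying the corresponding identities with $A$ and $B$ swapped. For each $y \in \overline{D(A)}$ and each $\lambda > \eta'+\omega_A$, the function $t\mapsto e^{-\lambda t}T_{A_0}(t)y$ lies in $L^q_{\eta'}(0,\infty;X)$ --- it is bounded on $[0,\infty)$ with exponential decay and, crucially, has no $t=0$ singularity to absorb since $T_{A_0}$ is a $C_0$-semigroup (unlike $T_B$ in the construction of $\mathcal R$). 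We may therefore set
$$\mathcal{R}^\ast(\lambda,A,B)y := \mathbb{K}_B\bigl(e^{-\lambda\cdot}T_{A_0}(\cdot)y\bigr) \in \overline{D(B)}.$$

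When $y \in D(A_0)$, the map $f(t):=e^{-\lambda t}T_{A_0}(t)y$ additionally lies in $W^{1,q}_{\eta'}(0,\infty;X)$ (with $f'(t)=e^{-\lambda t}T_{A_0}(t)(A_0-\lambda)y$) and in $L^q_{\eta'}(0,\infty;D(A))$ (with $Af(t)=e^{-\lambda t}T_{A_0}(t)A_0 y$), so the swapped analogs of Lemmas \ref{LE4.2} and \ref{LE4.3} yield $\mathcal{R}^\ast(\lambda,A,B)y \in D(A)\cap D(B)$ together with $B\mathcal{R}^\ast(\lambda,A,B)y = -\mathbb{K}_B(f')-y$ and $A\mathcal{R}^\ast(\lambda,A,B)y = \mathbb{K}_B(Af)$. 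Since $Af-f'=\lambda f$, adding these identities gives $(\lambda-(A+B))\mathcal{R}^\ast(\lambda,A,B)y = y$, so $D(A_0) \subset {\rm R}(\lambda-(A+B))$. Because $A_0$ generates a $C_0$-semigroup on $\overline{D(A)}$, $D(A_0)$ is dense in $\overline{D(A)}$, and passing to closure delivers $\overline{D(A)} \subset \overline{{\rm R}(\lambda-(A+B))}$, so taking $\lambda_0:=\eta'+\omega_A$ completes the argument. The main technical point is verifying the swapped-roles analogs of Lemmas \ref{LE4.1}--\ref{LE4.3}; this is where the asymmetry of the two operators must be handled carefully, but since the almost sectorial hypothesis on $B$ already implies both the weak Hille--Yosida property and Assumption \ref{ASS2} (for every $q>p^\ast$), the proofs transfer without modification.
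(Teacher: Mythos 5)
Your argument is correct, and it splits the same way the paper does into the two inclusions for $\overline{D(B)}$ and $\overline{D(A)}$; the first inclusion (via Lemma \ref{LE-REG}, giving $D(B)\subset {\rm R}(\lambda-(A+B))$) is identical to the paper's. For the second inclusion you diverge: the paper introduces $B_1$, the part of $B$ in $\overline{D(A)}$ (invariance of $\overline{D(A)}$ under $(\lambda-B)^{-1}$ following from \eqref{hyp-commute}), observes that $A_0$ is densely defined on $\overline{D(A)}$ while $B_1$ remains almost sectorial there, and then invokes Theorem 4.12 of \cite{DM-TAMS} to get $[\tilde\omega,\infty)\subset\rho\bigl(\overline{A_0+B_1}\bigr)$, whence $\overline{D(A)}=\overline{{\rm R}(\lambda-(A_0+B_1))}\subset\overline{{\rm R}(\lambda-(A+B))}$. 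You instead symmetrize the paper's own machinery: since Remark \ref{REM-B} gives Assumptions \ref{ASS1} and \ref{ASS2} for $B$ with any exponent $q>p^\ast$, the analogues of Lemmas \ref{LE4.1}--\ref{LE4.3} hold for $\mathbb{K}_B$, and feeding in $f(t)=e^{-\lambda t}T_{A_0}(t)y$ for $y\in D(A_0)$ (which is regular enough precisely because $T_{A_0}$ has no singularity at $t=0$) produces an explicit preimage of $y$ under $\lambda-(A+B)$; density of $D(A_0)$ in $\overline{D(A)}$ finishes it. Your route is self-contained within the paper's toolkit and avoids the external citation, at the cost of having to check that the three lemmas genuinely transfer under the role swap (they do, for the reason you give); the paper's route is shorter and additionally yields actual invertibility of $\overline{A_0+B_1}$ on the subspace $\overline{D(A)}$, not just density of the range. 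Two trivial touch-ups: the threshold should be strict, $\lambda>\eta'+\omega_A$, since at equality $e^{\eta'\cdot}f$ is merely bounded and need not lie in $L^q(0,\infty;X)$; and your final $\lambda_0$ must also dominate the threshold $\eta+\omega_0(B_0)$ required by Lemma \ref{LE-REG} for the $\overline{D(B)}$ inclusion.
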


\begin{proof}
To prove the above proposition, let us define the part of $B$ in $\overline{D(A)}$, that is linear operator $B_1 : D(B_1) \subset \overline{D(A)} \rightarrow \overline{D(A)}$ given by
$$\left\lbrace\begin{array}{l}
D(B_1) = \{ x \in D(B) \cap \overline{D(A)} : B x \in \overline{D(A)} \}, \\
B_1 x = B x, \quad \forall ~x \in D(B_1).
\end{array}\right.$$
Let us also recall that the linear operator $A_0 : D(A_0) \subset \overline{D(A)} \rightarrow \overline{D(A)}$, defined in \eqref{Def A0}, is the part of $A$ in $\overline{D(A)}$. Then, from Assumption \ref{ASS1}, $A_0$ is the infinitesimal generator of a strongly continuous semigroup $\{T_{A_0}(t)\}_{t\geq 0}$.

Moreover, thanks to the commutativity property \eqref{hyp-commute}, for all $\lambda \in \rho(B)$, we have
$$(\lambda - B)^{-1} \overline{D(A)} \subset \overline{D(A)}.$$
Then, we deduce that $B_1=B_{|\overline{D(A)}}$ and it follows that
$$
\rho(B_1) = \rho(B) \quad \text{and} \quad (\lambda - B_1)^{-1} = (\lambda - B)^{-1}_{|\overline{D(A)}}, \quad \forall \lambda\in \rho(B_1) = \rho(B).$$
Due to Assumption \ref{ASS-analytic}, the linear operator $B_1$ is the infinitesimal generator of a strongly continuous analytic semigroup of bounded linear operators on $\overline{D(B_1)} = \overline{D(B) \cap \overline{D(A)}} \subset \overline{D(B)}$ and the following estimate holds true$$\|(\lambda - B_1)^{-1}\|_{\mathcal{L}(\overline{D(A)})} \leq \|(\lambda - B)^{-1}\|_{\mathcal{L}(X)} = \mathcal O\left(\frac{1}{\lambda^{\frac{1}{p*}}}\right)\text{ as }\lambda\to\infty.$$
This implies that the linear operator $B_1$ is the infinitesimal generator of an integrated semigroup $\left(S_{B_1}(t)\right)_{t\geq 0}$ that is given by $S_{B_1}(t) = S_B (t)_{|\overline{D(A)}}$, for all $t\geq 0$.

Moreover, from Assumption \ref{ASS2} and Remark \ref{REM-B}, $\{S_1(t)\}_{t\geq 0}$ satisfies Assumption \ref{ASS1-A} for any $p>p^*$ on the Banach space $\overline{D(A)}$.

Next using Theorem 4.12 in \cite{DM-TAMS}, the commutative sum $A_0 + B_1$ is closable and there exists $\tilde{\omega} \in \mathbb{R}$, such that $[\tilde{\omega},\infty) \subset \rho(\overline{A_0+B_1})$ wherein $\overline{A_0+B_1}$ denotes the closure of $A_0+B_1:D(A_0)\cap D(B_1)\subset \overline{D(A)}\to \overline{D(A)}$. Thus, for any $\lambda > \tilde{\omega}$ one obtains that
$${\rm R}\left(\lambda-\left(\overline{A_0+B_1}\right)\right)=\overline{D(A)},$$
which implies
\begin{equation}\label{DA barre inclusion}
\overline{D(A)}=\overline{{\rm R}\left(\lambda - (A_0 + B_1)\right)} \subset \overline{{\rm R}\left(\lambda - (A + B)\right)},\quad \forall \lambda\geq \tilde\lambda.
\end{equation}
Furthermore Lemma \ref{LE-REG} ensures that
\begin{equation}\label{DB barre inclusion}
D(B)\subset {\rm R}\left(\lambda - (A + B)\right),\quad \forall \lambda>\eta+\omega_0(B_0). 
\end{equation}
Finally, \eqref{DA barre inclusion} and \eqref{DB barre inclusion} completes the proof of the proposition.
\end{proof}

\quad

In the sequel, we fix $\lambda_0>\eta+\omega_0(B_0)$ and we study the solvability of the equation for $y\in X$ and $\widehat \lambda\geq \lambda_0$
\begin{equation}\label{EQ}
-\left(A+B-\widehat \lambda\right)x=y.
\end{equation}
Following \cite{DM-TAMS} we recall the notion of solution for \eqref{EQ}.\begin{definition}[Weak and Classical solutions]\label{DEF-sol}
Let $y\in X$ and $\widehat \lambda\geq \lambda_0$ be given. We will say that $x\in X$ is\textbf{\
a weak solution} of \eqref{EQ} if $x$ and $y$ satisfy the following equality 
$$
\left[ \left( \mu+\widehat\lambda -B\right) ^{-1}+\left( \lambda -A\right) ^{-1}\right]
x=\left( \mu+\widehat \lambda -B\right) ^{-1}\left( \lambda -A\right) ^{-1}\left[ y+\left(
\lambda +\mu \right) x\right]  
$$
for some $\lambda \in \rho \left( A\right) $ and $\mu \in \mathbb{R}$ such that $\mu + \widehat\lambda \in \rho(B).$

We will say that $x$ is \textbf{a classical solution} of \eqref{EQ} if 
\begin{equation*}
x\in D(A)\cap D(B)\quad \text{and} \quad-\left[ A+B-\widehat\lambda\right] x=y.
\end{equation*}
\end{definition}

As proved by Ducrot and Magal in \cite{DM-TAMS} and using Lemma \ref{LE-REG}, one has
\begin{lemma}\label{LEGraph}
The following holds true:
\begin{itemize}
\item[{\rm (i)}] Let $y\in X$ and $\widehat\lambda\geq \lambda_0$ be given. Then, the following properties hold
\begin{itemize}
\item[(i1)] If $x\in X$ is a classical solution of \eqref{EQ}, then $x$ is a weak solution according to Definition \ref{DEF-sol}.
\item[(i2)] $x\in X$ is a weak solution of \eqref{EQ} if and only if $x=\mathcal R\left(\widehat\lambda,A,B\right)y$.
\end{itemize}
\item[{\rm (ii)}] Consider the closed set $G\subset X\times X$ given by
\begin{equation*}
G=\left\{(x,y)\in X\times X:\;x=\mathcal R\left(\widehat \lambda,A,B\right)y\right\}.
\end{equation*}
Then, there exists a closed linear operator $L_{\widehat\lambda}:D\left(L_{\widehat\lambda}\right)\subset X\to X$ such that $G={\rm Graph}\,\left(-L_{\widehat \lambda}\right)$, $0\in\rho\left(L_{\widehat \lambda}\right)$ and $\left(-L_{\widehat \lambda}\right)^{-1}=\mathcal R\left(\widehat\lambda,A,B\right)$.
\end{itemize}
\end{lemma}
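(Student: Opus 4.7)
I would split the proof into three stages: the implication (i1), the ``if'' direction of (i2) together with part (ii), and the uniqueness argument underlying the ``only if'' direction of (i2). For (i1), starting from a classical solution $x \in D(A) \cap D(B)$ with $-(A + B - \widehat\lambda) x = y$, I rewrite
$$y + (\lambda + \mu) x = (\lambda - A) x + (\mu + \widehat\lambda - B) x,$$
apply $(\mu + \widehat\lambda - B)^{-1}(\lambda - A)^{-1}$ to both sides, and use the commutativity hypothesis \eqref{hyp-commute} (which ensures that $(\lambda - A)^{-1}$ preserves $D(B)$ and commutes with $(\mu + \widehat\lambda - B)$ there) to recover the weak solution identity of Definition \ref{DEF-sol}.

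For the ``if'' direction of (i2) and for (ii), write $R_\lambda := (\lambda - A)^{-1}$ and $R_\mu^B := (\mu + \widehat\lambda - B)^{-1}$. The key point is that both resolvents commute with $\mathcal R(\widehat\lambda, A, B)$: indeed $R_\mu^B$ commutes with every resolvent of $A$, hence with $S_A$, with the derivation--convolution $\diamond$, and therefore with the limit $\mathbb K_A$; it also commutes with $T_B(\cdot)$, and the analogous statement holds for $R_\lambda$. Using these commutations, the weak equation for $x := \mathcal R(\widehat\lambda, A, B) y$ rearranges to
$$\bigl[R_\mu^B + R_\lambda - (\lambda + \mu) R_\mu^B R_\lambda\bigr]\,\mathcal R(\widehat\lambda, A, B)\, y = R_\mu^B R_\lambda y.$$
A direct computation using $A R_\lambda = \lambda R_\lambda - I$ and $B R_\mu^B = (\mu + \widehat\lambda) R_\mu^B - I$ identifies the operator in brackets with $(\widehat\lambda - (A+B)) \circ R_\mu^B R_\lambda$ on $X$; commuting $R_\mu^B R_\lambda$ past $\mathcal R(\widehat\lambda, A, B)$ and applying Lemma \ref{LE-REG} to the element $R_\mu^B R_\lambda y \in D(A)\cap D(B) \subset D(B)$ then yields the equality. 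Specializing to any $y$ with $\mathcal R(\widehat\lambda, A, B) y = 0$, the weak equation collapses to $R_\mu^B R_\lambda y = 0$ and hence $y = 0$, so $\mathcal R(\widehat\lambda, A, B)$ is injective. Part (ii) follows easily: $G$ is closed since $\mathcal R(\widehat\lambda, A, B)$ is bounded (Lemma \ref{LE-estiR}); injectivity lets us define $L_{\widehat\lambda}$ on $\mathrm{range}\,\mathcal R(\widehat\lambda, A, B)$ by $L_{\widehat\lambda}(\mathcal R(\widehat\lambda, A, B) y) = -y$, whose graph is $G$ and thus closed; and the bounded inverse $(-L_{\widehat\lambda})^{-1} = \mathcal R(\widehat\lambda, A, B) \in \mathcal L(X)$ gives $0 \in \rho(L_{\widehat\lambda})$.

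The ``only if'' direction of (i2) is the main obstacle; by linearity it amounts to showing that $x = 0$ is the only weak solution corresponding to $y = 0$. Manipulating the weak equation in the same spirit as (i1), but run in reverse, one finds that any such $x$ satisfies $R_\mu^B R_\lambda x \in D(A) \cap D(B)$ and $(A + B - \widehat\lambda)(R_\mu^B R_\lambda x) = 0$, so the question reduces to showing that the kernel of $A + B - \widehat\lambda$ on $D(A) \cap D(B)$ is trivial. I plan to settle this by a Liouville-type argument: by the remarks preceding Theorem \ref{THEO_sum} the restrictions of $T_{A_0}$ and $T_{B_0}$ to $\overline{D(A) \cap D(B)}$ commute, so $U(t) := T_{A_0}(t) T_{B_0}(t)$ defines a $C_0$-semigroup on $\overline{D(A) \cap D(B)}$ whose growth bound does not exceed $\omega_A + \omega_B$; for a kernel element $w$, after a regularization bringing $w$ into the domain of the generator of $U$ (for instance by replacing $w$ with $R_\mu^B R_\lambda w$, which still lies in the kernel), the identity $U(t) w = e^{\widehat\lambda t} w$ together with a choice of $\lambda_0$ large enough so that $\lambda_0 > \omega_A + \omega_B$ forces $w = 0$ upon letting $t \to \infty$. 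Injectivity of $R_\mu^B R_\lambda$ then delivers $x = 0$.
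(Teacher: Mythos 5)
The paper gives no proof of this lemma at all --- it simply invokes Lemma \ref{LE-REG} and defers to Ducrot--Magal \cite{DM-TAMS} --- so your reconstruction can only be compared with the argument of that reference. Your treatment of (i1), of the ``if'' direction of (i2), of injectivity of $\mathcal R\left(\widehat\lambda,A,B\right)$, and of (ii) is correct and is essentially the standard route: the identity $\bigl[R_\mu^B+R_\lambda-(\lambda+\mu)R_\mu^BR_\lambda\bigr]=\left(\widehat\lambda-(A+B)\right)R_\mu^BR_\lambda$ on $X$, the commutation of both resolvents with $\mathbb K_A$ and $T_B(\cdot)$, and Lemma \ref{LE-REG} applied to $R_\mu^BR_\lambda y\in D(B)$ do exactly what you say. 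Where you genuinely diverge is the uniqueness (``only if'') step. The reference proves it by establishing the \emph{left}-inverse identity $\mathcal R\left(\widehat\lambda,A,B\right)\left(\widehat\lambda-(A+B)\right)w=w$ for $w\in D(A)\cap D(B)$, obtained by applying Lemma \ref{LE4.2} to $f(t)=e^{-\widehat\lambda t}T_{B_0}(t)w$ (whose derivative is $-e^{-\widehat\lambda t}T_{B_0}(t)(\widehat\lambda-B)w$) together with the $A$- and $B$-commutation of $\mathbb K_A$ from Lemma \ref{LE4.3}; triviality of $\ker\left(\widehat\lambda-(A+B)\right)$ is then immediate. You instead run a Liouville-type argument on the product semigroup $U(t)=T_{A_0}(t)T_{B_0}(t)$. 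This is valid: the regularized kernel element $w'=R_\mu^BR_\lambda w$ does land in $D(A_0)\cap D(B_0)$ (resolvent commutation gives $Aw'=\lambda w'-R_\mu^Bw\in\overline{D(A)}$ and similarly for $B$), the identity $U(t)w'=e^{\widehat\lambda t}w'$ follows, and the paper's normalization $\lambda_0>\eta+\omega_0(B_0)$ with $\eta>\max(0,\omega_A)$ does guarantee $\widehat\lambda>\omega_0(A_0)+\omega_0(B_0)$, so the growth bound kills $w'$. What the reference's route buys is that it stays entirely inside the $\mathbb K_A$-calculus already set up in Lemmas \ref{LE4.1}--\ref{LE4.3} and needs no domain gymnastics with the product semigroup; what yours buys is a more transparent spectral picture. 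Two details you should make explicit: (1) the reduction of uniqueness to $y=0$ by linearity requires that $\mathcal R\left(\widehat\lambda,A,B\right)y$ satisfy the weak identity for the \emph{same} pair $(\lambda,\mu)$ as the given weak solution $x$ --- your ``if''-direction computation does deliver this for every admissible pair, but it should be said; (2) the commutation of $R_\lambda$ with $T_B(t)=(\mu-B_0)T_{B_0}(t)(\mu-B)^{-1}$, which you use implicitly, needs the observation that $R_\lambda$ preserves $\overline{D(B)}$ and $D(B_0)$.
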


We are now able to prove the closability of $A+B$. We will more particularly prove the following result.
\begin{proposition}
Let Assumption \ref{ASS1-C} be satisfied. Then the linear operator $A+B:D(A)\cap D(B)\subset X\to X$ is closable and its closure, denoted by $\overline{A+B}:D\left(\overline{A+B}\right)\subset X\to X$ satisfies for all $\widehat\lambda\geq \lambda_0$
\begin{equation*}
\left\lbrace\begin{aligned}
&D(A)\cap D(B)\subset D\left(\overline{A+B}\right)\subset \overline{D(A)\cap D(B)},\quad D\left(\overline{A+B}\right)\subset D\left(L_{\widehat \lambda}\right),\\
&\overline{A+B}\,x=\left(L_{\widehat{\lambda}}+\widehat\lambda\right)x,\quad\forall x\in D\left(\overline{A+B}\right).
\end{aligned}\right.
\end{equation*}
Let us furthermore assume that there exists $\widehat\lambda\geq \lambda_0$ such that
$${\rm R}\left(\widehat\lambda-(A+B)\right)\quad \text{is dense in }X.$$
Then, the closure $\overline{A+B}$ coincides with $L_{\widehat\lambda}+\widehat\lambda$, namely $D\left(L_{\widehat\lambda}\right)=D\left(\overline{A+B}\right)$. Moreover one also has $\widehat\lambda\in \rho\left(\overline{A+B}\right)$ and
\begin{equation*}
\left(\widehat\lambda-\left(\overline{A+B}\right)\right)^{-1}=\mathcal R(\widehat\lambda,A,B).
\end{equation*}
\end{proposition}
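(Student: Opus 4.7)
The plan is to exploit Lemma \ref{LEGraph}, which already packages $\mathcal R(\widehat\lambda,A,B)$ as the inverse of a closed operator $-L_{\widehat\lambda}$. The key identity that I will propagate everywhere is a ``left-inverse'' relation: for every $x\in D(A)\cap D(B)$, the element $x$ is a classical solution of $-\left(A+B-\widehat\lambda\right)x=(\widehat\lambda-(A+B))x$, so by Lemma \ref{LEGraph}(i1)-(i2) it is also a weak solution and therefore
$$
x=\mathcal R(\widehat\lambda,A,B)\bigl(\widehat\lambda x-(A+B)x\bigr),\qquad\forall\,x\in D(A)\cap D(B).
$$
Since $\mathcal R(\widehat\lambda,A,B)\in\mathcal L(X,\overline{D(A)})$ by Lemma \ref{LE-estiR}, this identity passes to limits.

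For \emph{closability}, suppose $x_n\in D(A)\cap D(B)$ with $x_n\to 0$ and $(A+B)x_n\to y$. Applying $\mathcal R(\widehat\lambda,A,B)$ to the identity above and letting $n\to\infty$ yields $0=-\mathcal R(\widehat\lambda,A,B)y$; since $\mathcal R(\widehat\lambda,A,B)=(-L_{\widehat\lambda})^{-1}$ is injective, $y=0$. For \emph{identifying the closure}, take $x\in D(\overline{A+B})$ with an approximating sequence $x_n\in D(A)\cap D(B)$; the same limiting procedure gives $x=\mathcal R(\widehat\lambda,A,B)(\widehat\lambda x-\overline{A+B}\,x)$, hence $x\in\mathrm R(\mathcal R(\widehat\lambda,A,B))=D(L_{\widehat\lambda})$ and $\overline{A+B}\,x=(L_{\widehat\lambda}+\widehat\lambda)x$. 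The two domain inclusions $D(A)\cap D(B)\subset D(\overline{A+B})\subset\overline{D(A)\cap D(B)}$ are immediate from the definition of closure.

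For the \emph{second part}, assuming $\mathrm R(\widehat\lambda-(A+B))$ is dense in $X$, I need the reverse inclusion $D(L_{\widehat\lambda})\subset D(\overline{A+B})$. Given $x\in D(L_{\widehat\lambda})$, set $y:=-L_{\widehat\lambda}x$, so $x=\mathcal R(\widehat\lambda,A,B)y$. By density, pick $z_n\in D(A)\cap D(B)$ with $(\widehat\lambda-(A+B))z_n\to y$; the boxed identity gives $z_n=\mathcal R(\widehat\lambda,A,B)(\widehat\lambda-(A+B))z_n\to\mathcal R(\widehat\lambda,A,B)y=x$ by continuity, and then $(A+B)z_n=\widehat\lambda z_n-(\widehat\lambda-(A+B))z_n\to\widehat\lambda x-y=(L_{\widehat\lambda}+\widehat\lambda)x$. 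Hence $x\in D(\overline{A+B})$ and $\overline{A+B}=L_{\widehat\lambda}+\widehat\lambda$. Since $0\in\rho(L_{\widehat\lambda})$ with $(-L_{\widehat\lambda})^{-1}=\mathcal R(\widehat\lambda,A,B)$, one immediately gets $\widehat\lambda\in\rho(\overline{A+B})$ together with the announced formula for the resolvent.

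The main obstacle is to keep clean track of which statement of Lemma \ref{LEGraph} (classical $\Rightarrow$ weak, and the characterization of weak solutions through $\mathcal R$) gives the boxed left-inverse identity; once this is in hand, the closability, the identification of $\overline{A+B}$, and the final resolvent identity are all routine limiting arguments based on the boundedness of $\mathcal R(\widehat\lambda,A,B)$. No further use of the analytic structure, the almost-sectorial estimate, or Assumption \ref{ASS1-C} is needed beyond what is already encoded in the construction of $\mathcal R(\widehat\lambda,A,B)$ and $L_{\widehat\lambda}$.
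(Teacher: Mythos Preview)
Your proof is correct and follows essentially the same route as the paper's. The paper phrases closability via the graph inclusion $\mathrm{Graph}(A+B-\widehat\lambda)\subset\mathrm{Graph}(L_{\widehat\lambda})$ (which is exactly your left-inverse identity $x=\mathcal R(\widehat\lambda,A,B)(\widehat\lambda x-(A+B)x)$ read as an inclusion of graphs), and for the second part it runs the same density-plus-continuity argument you give; the only difference is cosmetic, namely that you spell out the sequential closability criterion explicitly while the paper invokes the closedness of $L_{\widehat\lambda}$ directly on the closure of the graph.
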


\begin{proof}
We first prove the first part of the proposition, namely the closability part.
To that aim, fix $\widehat\lambda\geq \lambda_0$
and consider $G_0={\rm Graph}\,\left(A+B-\widehat\lambda\right)$. Then, from $(i1)$ in Lemma \ref{LEGraph} and recalling that $L_{\widehat\lambda}$ is a closed linear operator one gets
\begin{equation*}
G_0\subset G\;\Longrightarrow\;\overline{G_0}\subset {\rm Graph}\,\left(L_{\widehat\lambda}\right). 
\end{equation*}
This means that $L_{\widehat\lambda}:D\left(L_{\widehat{\lambda}}\right)\subset X\to X$ is an invertible closed extension of $(A+B-\widehat\lambda)$ and this completes the proof of the first part of the result.

To prove the second part, assume now there exists $\widehat\lambda\geq\lambda_0$ such that the range $\left(\widehat\lambda-(A+B)\right)\left[D(A)\cap D(B)\right]$ is dense in $X$. 

Fix $(x,y)\in {\rm Graph}\,\left(L_{\widehat\lambda}\right)$, that is $x\in D\left(L_{\widehat{\lambda}}\right)$ and $y=L_{\widehat{\lambda}}x$, namely $x=-\mathcal R\left(\widehat \lambda, A,B\right)y$. Then, there exists a sequence $\{(x_n,y_n)\}_{n\geq 0}\subset \left(D(A)\cap D(B)\right)\times X$ such that
$$
y_n\to y\quad\text{as}\quad n\to \infty\quad\text{and}\quad \widehat\lambda x_n-\left(A+B\right)x_n = -y_n,\quad\forall n\geq 0.
$$
Next, according to Lemma \ref{LEGraph} $(i2)$, one has
$$
x_n=-\mathcal R\left(\widehat\lambda,A,B\right)y_n,\quad \forall n\geq 0,
$$
so that the sequence $(x_n)_{n\geq 0}$ converges in $X$ toward $x_\infty:=-\mathcal R\left(\widehat\lambda,A,B\right)y\in X$. This ensures that $x=x_\infty\in D\left(L_{\widehat \lambda}\right)$. Since $\overline{A+B}$ is closed, one also deduces that
$$
x_\infty\in D\left(\overline{A+B}\right) \quad \text{and} \quad \widehat\lambda x_\infty-\left(\overline{A+B}\right)x_\infty=-y.
$$
Therefore $x\in D\left(\overline{A+B}\right)$ and $(x,y)\in {\rm Graph}\left(\left(\overline{A+B}\right)-\widehat\lambda\right)$.
Hence $D\left(L_{\widehat{\lambda}}\right)=D\left(\overline{A+B}\right)$ and $L_{\widehat{\lambda}}=\overline{A+B}-\widehat\lambda$. Since $0\in \rho\left(L_{\widehat{\lambda}}\right)$ we obtain that $\widehat\lambda\in \rho\left(\overline{A+B}\right)$ and $\left(\widehat\lambda-\overline{A+B}\right)^{-1}=\mathcal R\left(\widehat \lambda,A,B\right)$.
This completes the proof of the proposition.

\end{proof}


We are now in position to complete this section by proving Theorem \ref{THEO_sum}.\\

\begin{proof}[Proof of Theorem \ref{THEO_sum}]

Note that the above proposition already proves the first part of the theorem on the closability of $A+B$.
It also show that when the range of $\lambda-(A+B)$ is dense, for all $\lambda$ large enough, then $\overline{D(\overline{A+B})}=\overline{D(A)\cap D(B)}$ and there exists $\tilde \lambda$ large enough such that $[\tilde \lambda,\infty)\subset \rho\left(\overline{A+B}\right)$ and that
$$
\left(\lambda-\overline{A+B}\right)^{-1}=\mathcal R(\lambda,A,B),\quad\forall \lambda\geq \tilde{\lambda}.
$$
We will now prove ${\rm (ii)}$. To that aim observe that $\{T_{A_0}(t)T_{B_0}(t)\}_{t\geq 0}$ is a strongly continuous semigroup on $\overline{D(A)\cap D(B)}$. Next, for any $x\in \overline{D(A)\cap D(B)}$ and any $\lambda\geq \tilde\lambda$, one has
\begin{equation*}
\begin{split}
\left(\lambda-\left(\overline{A+B}\right)_0\right)^{-1}x &=\mathcal R(\lambda,A,B)x\\
&=\lim_{t\to\infty} \left(S_A\diamond \left(e^{-\lambda(t-\cdot)}T_{B}(t-\cdot)x\right)\right)(t).
\end{split}
\end{equation*}
Since $x\in \overline{D(B)}$, one has $T_B(s)x=T_{B_0}(s)x$, for all $s\geq 0$ while since $x\in \overline{D(A)\cap D(B)}$, one has $T_{B_0}(s)x\in \overline{D(A)}$ for all $s\geq 0$. As a consequence for any $t\geq 0$ we have
\begin{equation*}
\begin{split}
\left(S_A\diamond (e^{-\lambda(t-\cdot)}T_{B}(t-\cdot)x)\right)(t)&=\int_0^t T_{A_0}(t-s)e^{-\lambda(t-s)}T_{B_0}(s)x \d s\\
&=\int_0^t e^{-\lambda s}T_{A_0}(s)T_{B_0}(s)x\d s.
\end{split}
\end{equation*}
Hence we get $x\in \overline{D(A)\cap D(B)}$ and for any $\lambda\geq \tilde\lambda$ we have
\begin{equation*}
\left(\lambda-\left(\overline{A+B}\right)_0\right)^{-1}x=\int_0^\infty e^{-\lambda s}T_{A_0}(s)T_{B_0}(s)x\d s,
\end{equation*}
that ensures that $\left(\overline{A+B}\right)_0$, the part of $\overline{A+B}$ in $\overline{D(A)\cap D(B)}$, is the infinitesimal generator of the strongly continuous semigroup $\left\{T_{A_0}(t)T_{B_0}(t)\right\}_{t\geq 0}$ (see \cite{Engel-Nagel}). This completes the proof of ${\rm (ii)}$. 

We now turn to the proof of ${\rm (iii)}$. To that aim, let us first observe that due to Lemma \ref{LE-estiR}, for all $x \in X$, one has 
$$
\left(\lambda-\overline{A+B}\right)^{-1}x \to 0 \quad\text{as} \quad\lambda\to \infty,
$$
with respect to the operator norm topology in $\mathcal L(X)$. Hence, due to ${\rm (ii)}$, $\overline{A+B}$ satisfies Assumption \ref{ASS1} and thus $\overline{A+B}$ is the infinitesimal generator of a non-degenerate and exponentially bounded integrated semigroup $\{S_{\overline{A+B}}(t)\}_{t\geq 0}$ and for all $\lambda>0$ large enough, one has
\begin{equation*}
\left(\lambda-\overline{A+B}\right)^{-1}x=\lambda\int_0^\infty e^{-\lambda l}S_{\overline{A+B}}(l)x \d l,\quad\forall x\in X.
\end{equation*} 
On the other hand, let us fix $\mu>\omega_A$ and $\lambda\in\R$. Then, we infer from Lemma \ref{LE2.3} that for each $t\geq 0$ one has
\begin{equation*}
(\mu-A_0)^{-1}\left(S_A\diamond \left(e^{-\lambda(t-\cdot)}T_B(t-\cdot)\right)\right)(t)=\int_0^t e^{-\lambda s}T_{A_0}(s)(\mu-A)^{-1}T_B(s)\d s.
\end{equation*}
Hence, using integration by parts we obtain, for all $x\in X$ and $t\geq 0$,
\begin{equation*}
\begin{split}
\left(S_A\diamond \left(e^{-\lambda(t-\cdot)}T_B(t-\cdot)x\right)\right)(t)=\,\,&e^{-\lambda t}\left(S_A\diamond T_B(t-\cdot)x\right)(t)\\
&+\lambda\int_0^t e^{-\lambda s}\left(S_A\diamond T_B(s-\cdot)x\right)(s)\d s.
\end{split}
\end{equation*}
Now, choose $\lambda>0$ large enough such that $\lambda>\eta+\max\left(0,\omega_0(B)\right)$. Then, for all $x \in X$, we have
$$ 
e^{-\lambda t}\left(S_A\diamond T_B(t-\cdot)x\right)(t)\to 0 \quad\text{as}\quad t\to\infty,
$$
and for all $x\in X$ and all $\lambda>0$ large enough,
\begin{equation*}
\begin{split}
\left(\lambda-\overline{A+B}\right)^{-1}x&=\mathcal R(\lambda,A,B)x\\
&=\lim_{t\to\infty}\left(S_A\diamond \left(e^{-\lambda(t-\cdot)}T_B(t-\cdot)x\right)\right)(t)\\
&=\lambda\int_0^\infty e^{-\lambda s}\left(S_A\diamond T_B(s-\cdot)x\right)(s)\d s.
\end{split}
\end{equation*}
This implies that
$$
S_{\overline{A+B}}(t)x=\left(S_A\diamond T_B(t-\cdot)x\right)(t),\quad\forall t\geq 0,\;\forall x\in X,
$$
and this completes the proof of ${\rm (iii)}$.

Finally, let us complete the proof of ${\rm (iv)}$. To do so, let us fix $\tau>0$ and let $f\in C^1\left([0,\tau];X\right)$ be given. Next observe that one has
\begin{equation*}
S_{\overline{A+B}}\ast f(t)=\int_0^t \left(S_A\diamond T_B(l-\cdot)f(t-l)\right)(l)\d l,\;\forall t\in [0,\tau].
\end{equation*}
Next, fix $\mu>\omega_A$ and note that for any $t\in [0,\tau]$ one has
\begin{equation*}
\begin{split}
(\mu-A_0)^{-1}S_{\overline{A+B}}\ast f(t)&=\int_0^t\int_0^l T_{A_0}(s)(\mu-A)^{-1}T_B(s)f(t-l)\d s\d l\\
&=(\mu-A_0)\int_0^t T_{A_0}(s)(\mu-A)^{-1}T_B(s)\hspace{-0.05cm}\left[\int_s^t f(t-l)\d l\right]\hspace{-0.05cm}\d s.
\end{split}
\end{equation*}
Setting $\sigma=t-l$ yields
\begin{equation*}
(\mu-A_0)^{-1}S_{\overline{A+B}}\ast f(t)=\int_0^t T_{A_0}(s)(\mu-A)^{-1}T_B(s)\left[\int_0^{t-s} f(\sigma)\d \sigma\right]\d s,
\end{equation*}
so that, for each $t\in [0,\tau]$, we get
\begin{equation*}
(\mu-A_0)^{-1}\left(S_{\overline{A+B}}\diamond f\right)(t)=\int_0^t T_{A_0}(s)(\mu-A)^{-1}T_B(s)f(t-s)\d s.
\end{equation*}
This ensures that
\begin{equation*}
\left(S_{\overline{A+B}}\diamond f\right)(t)=\Bigl(S_A\diamond \bigl(T_B(t-\cdot)f(\cdot)\bigr)\Bigr)(t),\quad\forall t\geq 0.
\end{equation*}
Now, since for each $t\in (0,\tau]$ the map $s\mapsto T_B(t-s)f(s)\in L^p(0,t;X)$, Assumption \ref{ASS1-A} for the linear operator $A$ ensures that, for all $t\in [0,\tau]$,
\begin{equation*}
\|\left(S_{\overline{A+B}}\diamond f\right)(t)\|\leq M_A \left(\int_0^t e^{p\omega_A(t-s)}\|T_B(t-s)f(s)\|^p \d s\right)^{\frac{1}{p}},\quad\forall t\geq 0.
\end{equation*}
Hence, due to estimate \eqref{esti-TB}, for each $\alpha\in \left(\frac{1}{q^*},\frac{1}{p}\right)$ and each $\omega>\omega_0(B_0)$, there exists some constant $M=M_{\alpha,\omega}$ (independent of $f$) such that for any $t\in [0,\tau]$ and any $1<r<\frac{1}{p\alpha}$ it holds that
\begin{equation*}
\begin{split}
\|\left(S_{\overline{A+B}}\diamond f\right)(t)\|^p&\leq M_A M \int_0^t \frac{1}{(t-s)^{p\alpha}}\,e^{p(\omega_A-\omega)(t-s)}\|f(s)\|^p \d s\\
&\leq M_A M \left(\int_0^t \frac{e^{rp(\omega_A-\omega)(t-s)}}{(t-s)^{rp\alpha}}\d s\right)^{1/r}\left(\int_0^t \|f(s)\|^{r'p} \d s\right)^{1/r'}.\end{split}
\end{equation*}
This prove ${\rm (iv)}$ for any $f$ of class $C^1$ and the result follows from Theorem 3.6 in \cite{Magal-Ruan17}. 
\end{proof}

\section{Applications}

In this section, we study problem \eqref{eq-lin}, that we recall below
\begin{equation}\label{eq-lin'}
\begin{cases}
\left(\partial_t +\partial_a-\Delta_x\right)u(t,a,x)=f(t,a,x), & t>0,\;a>0,\;x\in\Omega,\\
u(t,0,x)=g(t,x), & t>0,\;x\in\Omega,\\
\nabla u(t,a,x)\cdot \nu(x)=h(t,a,x), & t>0,\;a>0,\;x\in\partial\Omega,\\
u(0,a,x) = u_0(a,x), & a>0,\;x \in \Omega.
\end{cases}
\end{equation}
Herein $\Omega\subset \R^N$ denotes a bounded domain with a smooth boundary $\partial\Omega$.

To handle the above problem, fix $q\in (1,\infty)$ and $p\in (1,\infty)$ those specific properties will be discussed below (see condition \eqref{pq}). Our aim in this section is to consider \eqref{eq-lin'} using the results of Theorem \ref{THEO_sum} on the commutative sum of linear operators and to reformulate this partial differential equation as an abstract Cauchy problem involving a non-densely defined operator that generated a suitable integrated semigroup with a well described generator.
Before going to the main result of this section, we need to introduce various spaces and linear operators.

Firstly, we consider the Banach space 
\begin{equation}\label{def-Y}
Y:=W^{1-\frac{1}{q},q}\left(\partial\Omega\right)\times L^q(\Omega),
\end{equation}
as well as the linear operator $B':D(B')\subset Y\to Y$ defined by
\begin{equation*}
\left\lbrace\begin{aligned}
&D(B')=\{0\}\times W^{2,q}(\Omega), \\ 
&B'\begin{pmatrix} 0\\\psi\end{pmatrix}=\begin{pmatrix} -\nabla\psi\cdot\nu\\ \Delta\psi\end{pmatrix}, \quad \forall ~\begin{pmatrix} 0\\\psi\end{pmatrix} \in D(B').
\end{aligned}\right.
\end{equation*}
Here $W^{1-\frac{1}{q},q}\left(\partial\Omega\right)$ denotes the trace space on $\partial \Omega$ associated to functions in $W^{1,q}(\Omega)$.\\Define also the linear operator $B_0':D(B_0')\subset \overline{D(B')}\to \overline{D(B')}$ as the part of $B'$ in $\overline{D(B')}$, that is the linear operator given by
\begin{equation*}
\left\lbrace\begin{aligned}
&D(B'_0)=\left\{ \begin{pmatrix} 0\\ \psi\end{pmatrix} \in \{0\}\times W^{2,q}(\Omega) : \nabla \psi \cdot \nu = 0 \text{ on }\partial\Omega \right\}, \\
&B'_0 \begin{pmatrix} 0\\ \psi\end{pmatrix} = \begin{pmatrix} 0\\ \Delta \psi\end{pmatrix}, \quad \forall ~ \begin{pmatrix} 0\\ \psi\end{pmatrix} \in D(B'_0).
\end{aligned}\right.
\end{equation*}
It is well known (see for instance Henry \cite{Henry} and Yagi \cite{yagi}) that $B'_0$ is a sectorial operator on $\overline{D(B')}=\{0\}\times L^q(\Omega)$.

Next, recalling the definition of the space $Y$ in \eqref{def-Y}, let us define the Banach space
$$
X=\overline{D(B')}\times L^p(0,\infty; Y)=\left(\{0\}\times L^q(\Omega)\right)\times L^p(0,\infty; Y).
$$
We define the linear operator $B:D(B)\subset X\to X$ by
$$
\left\lbrace\begin{aligned}
&D(B)=D(B_0')\times L^p(0,\infty,D(B')), \\
&B\begin{pmatrix} y\\ \psi\end{pmatrix}=\begin{pmatrix}B_0'y\\ a\mapsto B'\psi(a,\cdot)\end{pmatrix}, \quad \forall ~\begin{pmatrix} y\\\psi\end{pmatrix} \in D(B),
\end{aligned}\right.
$$
as well as $A:D(A)\subset X\to X$ the linear operator given by 
$$
\left\lbrace\begin{aligned}
&D(A)=\{0_{\overline{D(B')}}\}\times \left\{\varphi\in W^{1,p}(0,\infty;Y):\;\varphi(0)\in \overline{D(B')}\right\}, \\
&A\begin{pmatrix} 0_{\overline{D(B')}}\\ \varphi\end{pmatrix}=\begin{pmatrix} -\varphi(0,\cdot)\\ -\partial_a \varphi\end{pmatrix}, \quad \forall ~\begin{pmatrix} 0\\ \varphi\end{pmatrix} \in D(A).
\end{aligned}\right.
$$

Next we now consider the linear operator $L:D(L)\subset X\to X$ defined as the sum of these two linear operators, that is
$$
D(L)=D(A)\cap D(B) \quad \text{and} \quad Lx=Ax+Bx, \quad \forall x\in D(A)\cap D(B).
$$
In a more concrete form, the operator $L$ expresses as
\begin{equation*}
\left\lbrace\begin{aligned}
&D(L)=D(A)\cap D(B) = \{0_{\overline{D(B')}}\}\times\left( W^{1,p}(0,\infty;Y)\cap  L^p(0,\infty,D(B'))\right), \\
&L\begin{pmatrix} 0_{\overline{D(B')}}\\ \varphi\end{pmatrix} = \begin{pmatrix} -\varphi(0)\\ a\mapsto -\partial_a \varphi(a,\cdot)+B'\varphi(a,\cdot)\end{pmatrix},\quad \forall ~\begin{pmatrix} 0_{\overline{D(B')}}\\ \varphi\end{pmatrix} \in D(L).
\end{aligned}\right.
\end{equation*}
The main result of this section is concerned with the closability of the linear operator $L$ and with property of the integrated semigroup generated by $L$.
Our result reads as follows.
\begin{theorem}\label{THEO-L}
Assume that the parameters $p>1$ and $q>1$ satisfy the following condition\begin{equation}\label{pqs}
1<\frac{1+q}{2q}+\frac{1}{p}.
\end{equation}
Then the linear operator $L:D(L)\subset X\to X$ is closable. Its closure $\overline{L}:D\left(\overline{L}\right)\subset X\to X$ enjoys the following properties
\begin{itemize}
\item [{\rm (i)}] $D(A)\cap D(B)\subset D(L)$ and $\overline{D\left(\overline{L}\right)}=\overline{D(A)\cap D(B)}$,
\item[{\rm (ii)}] The linear operator $\overline{L}:D(\overline{L})\subset X\to X$ satisfies Assumption \ref{ASS1-A} with parameter $p$ replaced by any $r\in (1,\infty)$ such that
\begin{equation*}
1+\frac{1}{r}<\frac{1+q}{2q}+\frac{1}{p}.
\end{equation*} 
\end{itemize}
\end{theorem}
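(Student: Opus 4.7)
The plan is to derive Theorem \ref{THEO-L} as a direct application of Theorem \ref{THEO_sum} to the pair $(A,B)$ defined above, with the almost-sectorial exponent chosen so that $1/p^* = (q+1)/(2q)$. First I would verify that $A$ satisfies Assumption \ref{ASS1-A} with the exponent $p$ already fixed in the definition of $X$. Its part $A_0$ in $\overline{D(A)} = \{0_{\overline{D(B')}}\} \times L^p(0,\infty;Y)$ is the left-translation semigroup acting fiberwise in $Y$ (with zero boundary data at $a=0$), hence a contraction with $\omega_0(A_0)\leq 0$. The weak Hille--Yosida bound, together with the $L^p$-estimate \eqref{estiLp} for the derivation-convolution $S_A\diamond f$, then follows from the explicit form of the integrated semigroup associated with boundary-perturbed age transport operators (a standard computation as in Magal--Ruan \cite{Magal-Ruan17}).

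Next I would verify Assumption \ref{ASS1-B} for $B$. Since $B$ is block-diagonal with respect to the splitting $X = \overline{D(B')} \times L^p(0,\infty;Y)$ (with the sectorial Neumann Laplacian $B'_0$ on the first factor and $B'$ applied fiberwise on the second), it suffices to show that $B'$ is $1/p^*$-almost sectorial on $Y$. Given $(f,g)\in Y$, solving $(\lambda - B')(0,\psi)=(f,g)$ amounts to the Neumann problem
\begin{equation*}
\lambda\psi - \Delta\psi = g \text{ in } \Omega,\qquad \nabla\psi\cdot\nu = f \text{ on } \partial\Omega.
\end{equation*}
Classical resolvent estimates for the Neumann Laplacian give $\|\psi\|_{L^q(\Omega)}\lesssim \lambda^{-1}\|g\|_{L^q}$ when $f=0$. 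For boundary data $f\in W^{1-1/q,q}(\partial\Omega)$ with $g=0$, a tangential Fourier/boundary-layer analysis in a tubular neighbourhood of $\partial\Omega$ shows that $\psi$ concentrates in a strip of width $\lambda^{-1/2}$, yielding
\begin{equation*}
\|\psi\|_{L^q(\Omega)}\lesssim \lambda^{-(q+1)/(2q)}\|f\|_{W^{1-1/q,q}(\partial\Omega)}\quad\text{as}\quad \lambda\to\infty.
\end{equation*}
Combining these gives $\|(\lambda-B')^{-1}\|_{\mathcal L(Y)} = \mathcal O\bigl(\lambda^{-(q+1)/(2q)}\bigr)$, and the same exponent is inherited by $B$ on $X$ since the first block $B'_0$ is genuinely sectorial.

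Resolvent commutativity \eqref{hyp-commute} is immediate because $A$ acts only in the age variable while $B$ acts only in the space variable. With the above choices the coupling inequality in Assumption \ref{ASS1-C}, namely $1/p + 1/p^* > 1$, reduces exactly to \eqref{pqs}. For part (b) of Theorem \ref{THEO_sum} I would invoke the remark that density of $\overline{D(A)}+\overline{D(B)}$ in $X$ implies the range-density condition \eqref{cond}; here this is essentially automatic, because $\overline{D(A)} = \{0\} \times L^p(0,\infty;Y)$ already fills the second component of $X$, while $\overline{D(B)}$ fills the first component through $\overline{D(B'_0)} = \{0\}\times L^q(\Omega)$. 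Statement (i) of Theorem \ref{THEO-L} is then Theorem \ref{THEO_sum}(b)(i), and statement (ii) is a direct reading of Theorem \ref{THEO_sum}(b)(iv), since $1/p + 1/p^* - 1 = 1/p + (q+1)/(2q) - 1$ characterises the admissible range of $r$. The main technical obstacle is Step 2: extracting the sharp exponent $1/p^* = (q+1)/(2q)$ for the Neumann-type operator $B'$ on the product trace space $Y$, which requires the boundary-layer scaling argument sketched above and is not directly available from textbook resolvent estimates.
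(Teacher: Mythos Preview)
Your proposal is correct and follows essentially the same route as the paper: verify Assumption \ref{ASS1-A} for $A$, Assumption \ref{ASS1-B} for $B$ with $p^*=2q/(q+1)$, check resolvent commutativity and the density condition $\overline{D(A)}+\overline{D(B)}=X$, and then read off (i) and (ii) from Theorem \ref{THEO_sum}(b)(i) and (b)(iv). The only notable divergence is in your Step~2: you propose to extract the exponent $1/p^*=(q+1)/(2q)$ for $(\lambda-B')^{-1}$ via a boundary-layer scaling argument, whereas the paper simply invokes the parameter-elliptic estimates of Agranovich, Denk and Faierman \cite{agranovich} (Lemma \ref{LE-opBp}), so this resolvent bound \emph{is} available from the literature and need not be rederived.
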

To prove this result we apply Theorem \ref{THEO_sum} with the linear operators $A$ and $B$ defined above. To that aim, let us first recall some properties satisfied by these operators.

We start with the description of the linear operator $A:D(A)\subset X\to X$. It enjoys the following properties.
\begin{lemma}\label{LE-opA}
Let $p>1$ be given. The linear operator $A:D(A)\subset X\to X$ satisfies Assumption \ref{ASS1-A}.
More precisely, one has
\begin{itemize}
\item [{\rm (i)}] $(0,\infty)\subset \rho(A)$ and for all $\lambda>0$ one has
\begin{equation*}
(\lambda-A)^{-1}\begin{pmatrix} y\\\psi\end{pmatrix} =\begin{pmatrix} 0\\ e^{-\lambda \cdot} y+\int_0^\cdot e^{-\lambda(\cdot-s)}\psi(s)\d s\end{pmatrix},\quad\forall \begin{pmatrix} y\\\psi\end{pmatrix}\in X,
\end{equation*}
and
\begin{equation*}
\left\|(\lambda-A)^{-1}\right\|_{\mathcal L(X)}=\mathcal O\left(\lambda^{-\frac{1}{p}}\right)\quad\text{as} \quad\lambda\to \infty.
\end{equation*}
\item[{\rm (ii)}] The part of $A$ in $\overline{D(A)}$, denoted by $A_0:D(A_0)\subset \overline{D(A)}\to \overline{D(A)}$ is the infinitesimal generator of a strongly continuous semigroup, denoted by $\left\{T_{A_0}(t)\right\}_{t\geq 0}$, on $\overline{D(A)}$.
\item[{\rm (iii)}] The integrated semigroup $\left\{S_A(t)\right\}_{t\geq 0}\subset \mathcal L(X)$ generated by $A:D(A)\subset X\to X$ is given, for each $\begin{pmatrix} g\\ h\end{pmatrix}\in L_{\rm loc}^p\left([0,\infty);X\right)$, by
\begin{equation*}
\left(S_A\diamond \begin{pmatrix} g\\ h\end{pmatrix}\right)(t)=\begin{pmatrix} 0\\ g(t-\cdot)\chi_{(0,t)}(\cdot)\end{pmatrix}+\int_0^t T_{A_0}(t-s)\begin{pmatrix} 0\\ h(s)\end{pmatrix}\d s,
\end{equation*}
wherein we have used $\chi$ to denote the characteristic function of a set.
\end{itemize}
\end{lemma}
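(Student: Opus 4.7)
The plan is to work through the three claims in order, exploiting the transport-plus-boundary structure of $A$. For (i), given $\lambda>0$ and $(y,\psi)\in X$, I would directly solve $(\lambda-A)(0,\varphi)=(y,\psi)$: the first component imposes $\varphi(0)=y$ (compatible with $D(A)$ since $y\in\overline{D(B')}$ by definition of $X$), while the second becomes the linear ODE $\partial_a\varphi+\lambda\varphi=\psi$ on $(0,\infty)$, whose variation-of-parameters solution with initial value $y$ is precisely the claimed formula. The operator-norm bound reduces to two elementary $L^p(0,\infty;Y)$ estimates: $\|e^{-\lambda\cdot}y\|_{L^p}=(p\lambda)^{-1/p}\|y\|_Y$, and Young's convolution inequality $\|e^{-\lambda\cdot}\ast\psi\|_{L^p}\leq \lambda^{-1}\|\psi\|_{L^p}$. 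The slower decay $O(\lambda^{-1/p})$ dominates.

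For (ii), I would identify $A_0$ acting on $\overline{D(A)}=\{0\}\times L^p(0,\infty;Y)$ as the negative shift with Dirichlet constraint $\varphi(0)=0$, whose semigroup is the translation $T_{A_0}(t)(0,\varphi)(a)=(0,\varphi(a-t)\chi_{a\geq t})$. To verify the Hille-Yosida bounds required by Assumption \ref{ASS1}, I iterate the resolvent from (i) on elements with vanishing first component: $(\lambda-A)^{-k}(0,\psi)$ reduces to convolution with the Gamma-type kernel $\tfrac{a^{k-1}}{(k-1)!}e^{-\lambda a}$, whose $L^{1}$-norm equals $\lambda^{-k}$. Young's inequality then yields $\|(\lambda-A_0)^{-k}\|_{\mathcal L(\overline{D(A)})}\leq \lambda^{-k}$, i.e.\ the weak Hille-Yosida condition with $M_A=1$ and $\omega_A=0$. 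The density of $D(A_0)$ in $\overline{D(A)}$ (by approximation of $L^p$ functions by compactly supported $W^{1,p}$ functions vanishing near the origin) then delivers the strongly continuous semigroup.

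For (iii), I would plug the explicit expressions for $(\mu-A)^{-1}$ and $T_{A_0}$ into the canonical representation
\[
S_A(t)x=\mu\int_0^t T_{A_0}(s)(\mu-A)^{-1}x\,\d s+(\mu-A)^{-1}x-T_{A_0}(t)(\mu-A)^{-1}x,
\]
simplify, and then read off the $\diamond$-formula by computing $\frac{\d}{\d t}(S_A\ast f)(t)$ for $f=(g,h)\in C^1$. Equivalently, one can check directly that the claimed expression satisfies the integrated Cauchy identity $u(t)=A\int_0^t u(s)\,\d s+\int_0^t f(s)\,\d s$; this makes transparent that the term $(0,g(t-\cdot)\chi_{(0,t)})$ is precisely the contribution due to the non-density of $A$ in $X$ (the boundary datum $g$ is propagated along characteristics), while the term $\int_0^t T_{A_0}(t-s)(0,h(s))\,\d s$ is the standard Duhamel contribution. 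The $L^p$-estimate of Assumption \ref{ASS2} then splits accordingly: the $g$-part contributes exactly $\bigl(\int_0^t\|g(s)\|_Y^p\,\d s\bigr)^{1/p}$, while the Duhamel part is controlled after the change of variables $b=t-s$ by a Young/Minkowski estimate exploiting the shift structure of $T_{A_0}$.

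The main obstacle I anticipate is the $L^p$-estimate for the Duhamel piece: after the substitution $b=t-s$, the integrand reads $h(t-b,a-b)$ supported on $\{0<b<\min(t,a)\}$, and the characteristic functions arising from the shift make the Minkowski/Young estimate a bit delicate to write cleanly. Everything else is either routine integration of the transport equation with boundary data or a direct application of the integrated semigroup machinery collected in Section~2.
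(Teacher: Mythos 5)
The paper does not actually prove this lemma: it simply refers the reader to Magal and Ruan \cite{Magal-Ruan07, Magal-Ruan17}, where the age-structured operator $\varphi\mapsto(-\varphi(0),-\partial_a\varphi)$ in the $L^p$ setting is treated in detail. Your proposal is a correct, self-contained reconstruction of exactly the arguments contained in those references: the variation-of-parameters formula for the resolvent together with the two elementary $L^p$ bounds (the $(p\lambda)^{-1/p}$ decay for the boundary part and Young's inequality $\|e^{-\lambda\cdot}\ast\psi\|_{L^p}\leq\lambda^{-1}\|\psi\|_{L^p}$ for the interior part) gives (i); the identification of $A_0$ as the nilpotent-type left translation semigroup with Dirichlet condition at $a=0$, plus the Gamma-kernel computation $\|(\lambda-A)^{-k}\|_{\mathcal L(\overline{D(A)})}\leq\lambda^{-k}$, gives (ii) with $M_A=1$, $\omega_A=0$; and verifying the integrated Cauchy identity for the claimed expression gives (iii). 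One small remark: the ``delicate'' Duhamel estimate you flag at the end is actually the easy half. Since $\{T_{A_0}(t)\}_{t\geq 0}$ is a contraction semigroup, one has
\begin{equation*}
\left\|\int_0^t T_{A_0}(t-s)\begin{pmatrix}0\\h(s)\end{pmatrix}\d s\right\|\leq\int_0^t\|h(s)\|\,\d s\leq(q\omega)^{-1/q}\left(\int_0^te^{p\omega(t-s)}\|h(s)\|^p\d s\right)^{1/p}
\end{equation*}
for any $\omega>0$ by H\"older's inequality with the exponential weight, so no Minkowski or shift-structure argument is needed there; the part of the estimate that genuinely uses the transport structure is the boundary term, where the change of variables $s=t-a$ turns $\|g(t-\cdot)\chi_{(0,t)}\|_{L^p(0,\infty;Y)}$ into exactly $\bigl(\int_0^t\|g(s)\|_Y^p\d s\bigr)^{1/p}$, as you note. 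With that adjustment your outline is complete and consistent with what the cited references establish.
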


The proof of this lemma can be found in \cite{Magal-Ruan07, Magal-Ruan17}.

We are now concerned with the linear operator $B':D(B')\subset Y\to Y$ that enjoys the following properties.
\begin{lemma}\label{LE-opBp}
Let $q>1$ be given. Then, the linear operator $B':D(B')\subset Y\to Y$ satisfies Assumption \ref{ASS1-B} with $p^*=\frac{2q}{1+q}$. One furthermore has $(0,\infty)\subset \rho\left(B'\right)=\rho\left(B_0'\right)$. \end{lemma}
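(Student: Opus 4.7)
The plan is to verify the two conditions of Assumption \ref{ASS-analytic} for the operator $B'$; the identity $\rho(B')=\rho(B_0')$ then follows from Magal and Ruan \cite[Lemma 2.1]{Magal-Ruan09a}. Through the natural identification of $\overline{D(B')}=\{0\}\times L^q(\Omega)$ with $L^q(\Omega)$, the part $B_0'$ coincides with the $L^q$-realization of the Laplacian with homogeneous Neumann boundary conditions, which is a classical sectorial operator (Henry \cite{Henry}, Yagi \cite{yagi}). This settles condition (a) and gives $(0,\infty)\subset\rho(B_0')$, together with the standard sectorial bound.

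For condition (b), the resolvent equation $(\lambda-B')(0,\psi)=(y,\phi)$ with $\lambda>0$ and $(y,\phi)\in Y$ amounts to the inhomogeneous Neumann problem
$$\lambda\psi-\Delta\psi=\phi \text{ in }\Omega,\qquad \partial_\nu\psi=y \text{ on }\partial\Omega.$$
Standard elliptic theory (e.g.~Agmon-Douglis-Nirenberg) ensures the existence of a unique $\psi\in W^{2,q}(\Omega)$, whence $(0,\infty)\subset\rho(B')$. To analyse the decay of $(\lambda-B')^{-1}$, I split $\psi=\psi_1+\psi_2$ according to whether the interior source $\phi$ or the boundary datum $y$ is non-zero. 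Sectoriality of $B_0'$ yields $\|\psi_1\|_{L^q}\leq C\lambda^{-1}\|\phi\|_{L^q}$, so this piece contributes a decay of order $\lambda^{-1}$, much faster than the target rate $\lambda^{-1/p^*}$.

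The heart of the argument is thus the Neumann-lifting estimate
$$\|\psi_2\|_{L^q(\Omega)}\leq C\,\lambda^{-(1+q)/(2q)}\|y\|_{W^{1-1/q,q}(\partial\Omega)},$$
where $\psi_2$ solves $(\lambda-\Delta)\psi_2=0$ in $\Omega$ with $\partial_\nu\psi_2=y$. I would obtain it by localizing near $\partial\Omega$ with a partition of unity, straightening the boundary, and performing the rescaling $\tilde\psi_2(x)=w(\sqrt\lambda\,x)$ which converts the problem to the normalized Helmholtz problem $(1-\Delta)w=0$ in $\mathbb{R}^n_+$ with boundary data of the form $\tilde y(\cdot/\sqrt\lambda)/\sqrt\lambda$. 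The classical $W^{2,q}$-estimate for the inhomogeneous Neumann problem on the half-space then provides $\|w\|_{W^{2,q}(\mathbb{R}^n_+)}\leq C\lambda^{-1/2}\|\tilde y(\cdot/\sqrt\lambda)\|_{W^{1-1/q,q}(\mathbb{R}^{n-1})}$, which serves as the input for the scaling calculation.

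The main obstacle is the scaling of the trace norm under $y\mapsto y(\cdot/\sqrt\lambda)$: the Sobolev-Slobodeckij space $W^{1-1/q,q}$ is not homogeneous, as its $L^q$ part and its Gagliardo seminorm pick up different powers of $\lambda$. For $q>1$ the $L^q$ part dominates for $\lambda\geq 1$, giving the upper bound $\|\tilde y(\cdot/\sqrt\lambda)\|_{W^{1-1/q,q}}\leq C\lambda^{(n-1)/(2q)}\|\tilde y\|_{W^{1-1/q,q}}$. Combined with the $\lambda^{-n/(2q)}$ coming from the $L^q(\mathbb{R}^n_+)$-dilation and the $\lambda^{-1/2}$ from the rescaling prefactor, the dimensional power $n$ cancels out and one is left with the dimension-free exponent $-(1+q)/(2q)=-1/p^*$, completing the verification of condition (b) with $p^*=2q/(1+q)$.
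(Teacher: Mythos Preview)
Your argument is correct, and in fact more detailed than what the paper does: the paper simply invokes the parameter-elliptic estimates of Agranovich, Denk and Faierman \cite{agranovich} for the inhomogeneous Neumann problem and does not spell out a proof. Your scaling approach---reduce to the half-space, dilate by $\sqrt{\lambda}$ so that the principal part becomes $1-\Delta$, and track how the $L^q$ norm in the interior and the $W^{1-1/q,q}$ norm on the boundary behave under this dilation---is exactly the mechanism behind those estimates in this special case, and your bookkeeping of the exponents (the $\lambda^{-n/(2q)}$ from the interior scaling, the $\lambda^{(n-1)/(2q)}$ from the dominant $L^q$ part of the trace norm, and the $\lambda^{-1/2}$ from the normal derivative) correctly produces the dimension-free rate $\lambda^{-(1+q)/(2q)}=\lambda^{-1/p^\ast}$. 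The only steps you leave implicit are the standard ones: that the lower-order and variable-coefficient terms introduced by the partition of unity and the boundary straightening become small after the $\sqrt{\lambda}$-rescaling and can be absorbed, and that the localized pieces can be patched back together with controlled overlap constants. These are routine for a smooth bounded domain and do not affect the outcome. In short, the paper defers to a reference while you give a self-contained derivation; both lead to the same estimate and the same value $p^\ast=\tfrac{2q}{1+q}$.
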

The proof of this lemma follows from the elliptic estimates derived by Agranovich et al. \cite{agranovich}.

Using this property we are now able to discuss some properties satisfied by the linear operator $B:D(B)\subset X\to X$.

\begin{lemma}\label{LE-opB}
Let $q>1$ be given.
Then the linear operator $B:D(B)\subset X\to X$ satisfies Assumption \ref{ASS1-B} with $p^*=\frac{2q}{1+q}$. Moreover one has
\begin{equation*}
(0,\infty)\subset \rho(B)=\rho(B')=\rho(B'_0),
\end{equation*}
and for all $\lambda\in \rho(B)$,
\begin{equation*}
(\lambda-B)^{-1}\begin{pmatrix} y\\ \psi\end{pmatrix}=\begin{pmatrix} (\lambda-B_0')^{-1}(y)\\ a\mapsto (\lambda-B')^{-1}\psi(a,\cdot)\end{pmatrix}.
\end{equation*}
\end{lemma}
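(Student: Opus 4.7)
The plan is to reduce every statement about $B$ to a corresponding statement for $B'$ (and its part $B_0'$), exploiting the fact that $B$ acts componentwise on the product space $X=\overline{D(B')}\times L^p(0,\infty;Y)$. Throughout, I use that by Lemma \ref{LE-opBp} and Magal--Ruan \cite[Lemma 2.1]{Magal-Ruan09a} one has $(0,\infty)\subset \rho(B')=\rho(B_0')$.

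First I would verify the resolvent formula by direct componentwise substitution. For $\lambda\in \rho(B')$, define $R(\lambda)\in\mathcal L(X)$ by the right-hand side of the claimed formula. Boundedness is immediate since $(\lambda-B_0')^{-1}\in\mathcal L(\overline{D(B')})$ and $(\lambda-B')^{-1}\in\mathcal L(Y)$ acts pointwise in $a$ on $L^p(0,\infty;Y)$ with the same operator norm. The range of $R(\lambda)$ lies in $D(B_0')\times L^p(0,\infty;D(B'))=D(B)$, and the identities $(\lambda-B)R(\lambda)=I_X$ and $R(\lambda)(\lambda-B)=I_{D(B)}$ hold componentwise. This yields $\lambda\in\rho(B)$ with $(\lambda-B)^{-1}=R(\lambda)$, and in particular $(0,\infty)\subset \rho(B)=\rho(B')=\rho(B_0')$.

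Second, I would transfer the resolvent decay. With the natural product norm on $X$,
\begin{equation*}
\left\|R(\lambda)\begin{pmatrix}y\\\psi\end{pmatrix}\right\|_X \leq \|(\lambda-B_0')^{-1}\|_{\mathcal L(\overline{D(B')})}\|y\| + \|(\lambda-B')^{-1}\|_{\mathcal L(Y)}\|\psi\|_{L^p(0,\infty;Y)}.
\end{equation*}
By Lemma \ref{LE-opBp}, the second norm is $\mathcal O(\lambda^{-1/p^*})$ with $p^*=\tfrac{2q}{1+q}$, while the first decays even faster (like $\lambda^{-1}$) since $B_0'$ is sectorial on $\overline{D(B')}$. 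Hence $\|(\lambda-B)^{-1}\|_{\mathcal L(X)}=\mathcal O(\lambda^{-1/p^*})$, which is part (b) of Assumption \ref{ASS-analytic}.

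Finally, I would verify part (a): that $B_0$, the part of $B$ in $\overline{D(B)}$, generates a strongly continuous analytic semigroup on $\overline{D(B)}=\overline{D(B')}\times L^p(0,\infty;\overline{D(B')})$. The operator $B_0$ splits as $B_0'$ on the first factor and as the pointwise action of $B_0'$ on the second. The first is analytic by Lemma \ref{LE-opBp}. For the second, the pointwise extension of $\{T_{B_0'}(t)\}_{t\geq 0}$ to $L^p(0,\infty;\overline{D(B')})$ is strongly continuous and analytic: strong continuity follows from dominated convergence and strong continuity of $T_{B_0'}$ on the sector, while the sectorial resolvent bound for the pointwise generator is inherited from that of $B_0'$ by the same pointwise-in-$a$ argument as above. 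The product semigroup is then the required analytic semigroup generated by $B_0$, completing Assumption \ref{ASS1-B} with $p^*=\tfrac{2q}{1+q}$. The only substantive point is this vector-valued extension of analyticity and sectoriality, and it is routine because every estimate is pointwise in the $a$ variable.
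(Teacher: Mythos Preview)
Your proposal is correct and follows exactly the approach the paper indicates: the paper's proof consists of the single sentence that the result ``directly follows from the properties of the linear operator $B'$ stated in Lemma~\ref{LE-opBp} above,'' and you have simply spelled out those routine componentwise verifications (resolvent formula, transfer of the $\mathcal O(\lambda^{-1/p^*})$ decay, and analyticity of the product semigroup on $\overline{D(B)}$).
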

The proof of this lemma directly follows from the properties of the linear operator $B'$ stated in Lemma \ref{LE-opBp} above.

Using the above properties, we are able to prove Theorem~\ref{THEO-L}. As already mentioned above, its proof is a direct application of Theorem~\ref{THEO_sum}.

\begin{proof}
First note that due to Lemma~\ref{LE-opA} and Lemma~\ref{LE-opB}, to apply Theorem~\ref{THEO_sum}, it is sufficient to check that $A$ and $B$ are resolvent commuting and that ${\rm R}\left(\lambda-\left(A+B\right)\right)$ is dense in $X$ for all $\lambda$ large enough.

\noindent{\bf Resolvent commutativity:} This property follows from the resolvent formula described in Lemma \ref{LE-opA} and \ref{LE-opB}.
Indeed, let $\lambda\in \rho(A)$ and $\mu\in \rho(B)$ be given. Then, for each $(y,\psi)^T\in X$ one has
\begin{equation*} 
\begin{cases}
(\mu-B)^{-1}(\lambda-A)^{-1}\begin{pmatrix} y\\ \psi\end{pmatrix}=(\mu-B)^{-1}\begin{pmatrix} 0\\ e^{-\lambda \cdot} y+\int_0^\cdot e^{-\lambda(\cdot-s)}\psi(s)\d s\end{pmatrix},\quad\forall \begin{pmatrix} y\\\psi\end{pmatrix},\\
\begin{pmatrix} 0\\ e^{-\lambda \cdot} (\mu-B_0')^{-1}y+\int_0^\cdot e^{-\lambda(\cdot-s)}(\mu-B')^{-1}\psi(s)\d s\end{pmatrix},\quad\forall \begin{pmatrix} y\\\psi\end{pmatrix},
\end{cases}
\end{equation*}
while
\begin{equation*} 
\begin{cases}
(\lambda-A)^{-1}(\mu-B)^{-1}\begin{pmatrix} y\\ \psi\end{pmatrix}=(\lambda-A)^{-1}\begin{pmatrix} (\mu-B_0')^{-1}(y)\\ a\mapsto (\mu-B')^{-1}\psi(a,\cdot)\end{pmatrix}, \\
=\begin{pmatrix} 0\\ e^{-\lambda \cdot} (\mu-B_0')^{-1}y+\int_0^\cdot e^{-\lambda(\cdot-s)} (\mu-B')^{-1}\psi(s)\d s\end{pmatrix}.
\end{cases}
\end{equation*}
Hence $A$ and $B$ are resolvent commuting.

\noindent{{\bf Density of the range:}} To prove that ${\rm Ran\,(\lambda-A-B)}$ is dense for all $\lambda$ large enough, let us notice that one has
\begin{equation*}
\begin{split}
&\overline{D(A)}=\left\{0_{\overline{D(B')}}\right\}\times L^p\left(0,\infty;Y\right),\\
&\overline{D(B)}=\overline{D(B')}\times L^p\left(0,\infty;\overline{D(B')}\right),
\end{split}
\end{equation*}
Hence $\overline{D(A)}+\overline{D(B)}=X$. Hence Property \eqref{pq} in Theorem \ref{THEO_sum} applies and ensures that the range of $\lambda-A-B$ is dense in $X$ for all $\lambda$ large enough. Hence Condition \eqref{cond} in Theorem \ref{THEO_sum} is satisfied.

As a conclusion, since $p^*=\frac{2q}{1+q}$, Condition \eqref{pqs} ensures that Assumption \ref{ASS1-C} is satisfied. Applying Theorem \ref{THEO_sum} with the linear operators $A$ and $B$ completes the proof of Theorem \ref{THEO-L}.

\end{proof}

Now let us come back to \eqref{eq-lin'}. Let $p>1$ and $q>1$ be given satisfying \eqref{pqs}. Let $r>1$ such that
\begin{equation*}
1+\frac{1}{r}<\frac{1+q}{2q}+\frac{1}{p}.
\end{equation*} 
Now let $T>0$ be given and fixed. Consider $u_0=u_0(a,x)\in L^p\left(0,\infty;L^q(\Omega)\right)$, $f=f(t,a,x)\in L^r\left(0,T;L^p\left(0,\infty;L^q(\Omega)\right)\right)$, $g=g(t,x)\in L^r\left(0,T;L^q(\Omega)\right)$ and $h=h(t,a,x)\in L^r\left(0,T; L^p\left(0,\infty; W^{1-\frac{1}{q},q}(\partial\Omega)\right)\right)$.
Next set $F\in L^r(0,T;X)$ the function defined by
$$
F(t)=\begin{pmatrix} \begin{pmatrix} 0\\ g(t,\cdot)\end{pmatrix}\\a\mapsto \begin{pmatrix} h(t,a,\cdot)\\ f(t,a,\cdot)\end{pmatrix}\end{pmatrix}\in L^r\Bigl(0,T;\bigl(\overline{D(B')}\times L^p(0,\infty;Y)\bigr)\Bigr).
$$
Then problem \eqref{eq-lin'} formally rewrites as
\begin{equation*}
\left\lbrace\begin{aligned}
&\frac{\d v(t)}{\d t}=Lv(t)+F(t),\quad t\in (0,T), \\
&v(0) = v_0.
\end{aligned}\right.
\end{equation*}
wherein we have identified $v(t)\in \overline{D(L)}=\overline{D(A)\cap D(B)}$ together with the map 
$$t\mapsto \begin{pmatrix}0_{\overline{D(B')}}\\  a\mapsto \begin{pmatrix}0_{W^{1-\frac{1}{q},q}\left(\partial\Omega\right)}\\ u(t,a,\cdot)\end{pmatrix}\end{pmatrix}\in\overline{D(A)\cap D(B)}\subset X,$$
while $v(0)=v_0$ is defined similarly with $u(t,a,\cdot)$ replaced by $u_0(a,\cdot)$.\\
According to Theorem \ref{THEO-L}, $L$ is closable, so that we consider the closure equation
\begin{equation}\label{cauchy}
\left\lbrace\begin{aligned}
&\frac{\d v(t)}{\d t}=\overline{L}v(t)+F(t),\quad t\in (0,T), \\
&v(0) = v_0.
\end{aligned}\right.
\end{equation}
Herein $\overline L$ denotes the closure of $L$. Next using the properties of the linear operator $\overline L$, the above equation has a unique mild solution $u\in C\left([0,T]; \overline{D(A)\cap D(B)}\right)$ given by
\begin{equation*}
v(t)=T_{\overline L_0}(t)v_0+\left(S_{\overline L}\diamond F\right)(t),\quad \forall t\in [0,T],
\end{equation*}
wherein $\left\{T_{\overline L_0}(t)\right\}_{t\geq 0}\subset \mathcal L\left(\overline{D(A)\cap D(B)}\right)$ denotes the strongly continuous semigroup generated by $\overline L_0$, the part of $\overline{L}$ in $\overline{D(A)\cap D(B)}$ while $\left\{S_{\overline L}(t)\right\}_{t\geq 0}\subset \mathcal L(X;\overline{D(A)\cap D(B)})$ denotes the integrated semigroup on $X$ generated by $\overline L$.

Now using the formula for $T_{\overline L_0}$ and $S_{\overline L}$ expressed in our general result Theorem \ref{THEO_sum} in term of $T_{A_0}$ and $T_{B_0}$, that respectively denote the strongly continuous semigroups generated by $A_0$ and $B_0$, the parts of $A$ and $B$ in $\overline{D(A)}$ and $\overline{D(B)}$; and $S_A$, $S_B$ the integrated semigroups generated by $A$ and $B$ respectively, one may obtain the formula below for the mild solution $u=u(t,a,x)$ of \eqref{eq-lin'}.
\begin{equation*}
\begin{split}
v(t)=&T_{\overline L_0}(t)v_0+\left(S_{\overline L}\diamond F\right)(t)\\
=&T_{A_0}(t)T_{B_0}(t)v_0+\left(S_A\diamond \left(T_B(t-\cdot)F(\cdot)\right)\right)(t).
\end{split}
\end{equation*} 
Recalling that $\left\{S_{B'}(t)\right\}_{t\geq 0}\subset \mathcal L(Y)$ denotes the integrated semigroup generated by $B'$ and $\left\{T_{B_0'}(t)\right\}_{t\geq 0}\subset \mathcal L\left(\overline{D(B')}\right)$ is the strongly continuous semigroup generated by $B_0'$, the part of $B'$ in $\overline{D(B')}$ then one obtains the following explicit formula for the function $u=u(t,a,x)\in C\left([0,T]; L^p\left(0,\infty;L^q(\Omega)\right)\right)$, the mild solution of \eqref{eq-lin'}.
\begin{lemma}[Mild solution]\label{LEmild}
The mild solution of the abstract Cauchy problem \eqref{cauchy} (which corresponds to \eqref{eq-lin'}) is given -- in $Y$ -- for $t\in [0,T]$ and almost every $a>0$  by 
\begin{equation*}
\begin{pmatrix}
0_{W^{1-\frac{1}{q},q}(\partial\Omega)}\\
u(t,a,\cdot)
\end{pmatrix}
=\left\lbrace
\begin{array}{ll}
T_{B_0'}(t)\begin{pmatrix}
0\\ u_0(a-t,\cdot)
\end{pmatrix}
+\left(S_{B'} \diamond  \begin{pmatrix}
h(\cdot, \cdot+a-t)\\ f(\cdot,\cdot+a-t) 
\end{pmatrix} \right)(t), & \text{if } t<a, \medskip \\ 
T_{B_0'}(a)\begin{pmatrix}
0\\ g(t-a,\cdot)
\end{pmatrix}
\,\,+\left(S_{B'} \diamond  \begin{pmatrix}
h(\cdot, \cdot+t-a)\\ f(\cdot,\cdot+t-a) 
\end{pmatrix} \right)(a), & \text{if } t>a.
\end{array}
\right.
\end{equation*}
\end{lemma}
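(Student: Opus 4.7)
The plan is to start from the abstract mild-solution formula
\[
v(t)=T_{\overline L_0}(t)v_0+\bigl(S_{\overline L}\diamond F\bigr)(t),
\]
and to unfold each piece using the explicit representations supplied by Theorem~\ref{THEO_sum}. By part~(ii), $T_{\overline L_0}(t)=T_{A_0}(t)\circ T_{B_0}(t)$ on $\overline{D(A)\cap D(B)}$, and, as derived in the proof of part~(iv),
\[
\bigl(S_{\overline L}\diamond F\bigr)(t)=\Bigl(S_A\diamond\bigl(T_B(t-\cdot)F(\cdot)\bigr)\Bigr)(t).
\]
Hence everything reduces to computing these two objects explicitly via Lemmas~\ref{LE-opA}, \ref{LE-opBp} and~\ref{LE-opB}.

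Next I would exploit the block-diagonal structure of $B$: since $D(B)=D(B_0')\times L^p(0,\infty;D(B'))$, the semigroup $T_{B_0}(t)$ acts on $\overline{D(B)}$ slot-by-slot as $T_{B_0'}(t)$ (pointwise-in-age on the second slot), while $T_B(s)$ acts on $X$ as $T_{B_0'}(s)$ on the first slot (whose elements already lie in $\overline{D(B')}$) and as the pointwise-in-age action of the singular semigroup $T_{B'}(s)$ on the second slot. Writing $\Phi_1(s):=T_{B_0'}(t-s)(0,g(s,\cdot))$ and $\Phi_2(s,a):=T_{B'}(t-s)(h(s,a,\cdot),f(s,a,\cdot))$, Lemma~\ref{LE-opA}(iii) produces, for the second slot at age $a>0$,
\[
T_{B_0'}(t)(0,u_0(a-t,\cdot))\chi_{a>t}+\Phi_1(t-a)\chi_{0<a<t}+\int_{\max(0,t-a)}^{t}\Phi_2(s,a-t+s)\,ds.
\]
The change of variable $\sigma=a-t+s$ then turns the remaining integral into a convolution along the characteristic $a-t=\mathrm{const.}$

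Splitting the formula according to whether $t<a$ or $t>a$ (i.e.\ whether the characteristic through $(t,a)$ originates at $t=0$ or at $a=0$) and recognizing $\int_0^\tau T_{B'}(\tau-\rho)G(\rho)\,d\rho=(S_{B'}\diamond G)(\tau)$ by the definition of the integrated semigroup generated by $B'$ yields the two branches in the statement, with $\tau=t$ when $t<a$ and $\tau=a$ when $t>a$; simultaneously the boundary contribution $\Phi_1(t-a)$ simplifies to $T_{B_0'}(a)(0,g(t-a,\cdot))$. The main technical point is to justify the pointwise-in-age commutation of the singular semigroup $T_{B'}(s)$ with the age translation built into $T_{A_0}$; this is precisely what is encoded in the convolution identity of Theorem~\ref{THEO_sum}(iii)--(iv) under the assumption~\eqref{pqs}, which guarantees that all the integrals under consideration are absolutely convergent in the relevant Bochner spaces. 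Once this is granted, the rest of the argument is the bookkeeping change of variables sketched above.
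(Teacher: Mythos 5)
Your proposal is correct and follows essentially the same route the paper takes: unfold the abstract formula $v(t)=T_{\overline L_0}(t)v_0+(S_{\overline L}\diamond F)(t)$ via Theorem \ref{THEO_sum}(ii)--(iii), use the block-diagonal action of $T_{B_0}$ and $T_B$ together with the explicit description of $T_{A_0}$ and $S_A\diamond$ in Lemma \ref{LE-opA}(iii), and change variables along the characteristics to split the cases $t<a$ and $t>a$. The bookkeeping is accurate (in particular the boundary term $T_{B_0'}(a)(0,g(t-a,\cdot))$ and the lower limit $\max(0,t-a)$ in the Duhamel integral), so nothing is missing.
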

The above formula for the weak solution of the abstract Cauchy problem \eqref{cauchy} corresponds to the formal expression of the solution of \eqref{eq-lin'} that can be obtained using integration along the characteristics and also to the decomposition \eqref{eq-lin1}-\eqref{eq-lin2}-\eqref{eq-lin3} proposed in the introduction.


\begin{thebibliography}{99}

\bibitem{agranovich} M. S. Agranovich, R. Denk and M. Faierman, Weakly smooth nonselfadjoint spectral elliptic boundary problems, \textit{Mathematical topics}, \textbf{14} (1997), 138-199.

\bibitem{Arendt1} W. Arendt, Resolvent positive operators, \textit{Proc. London Math. Soc.}, \textbf{54 }(1987), 321-349.

\bibitem{Arendt2}  W. Arendt, Vector valued Laplace transforms and Cauchy problems, \textit{Israel J. Math.}, \textbf{59} (1987), 327-352.

\bibitem{Arendt-book} W. Arendt, C. J. K. Batty, M. Hieber, and F. Neubrander, \textit{Vector-Valued Laplace Transforms and Cauchy Problems}, Birkh\"auser, Basel, 2001.

\bibitem{arendt-bu-haase} W.~Arendt, S.~Bu and M.~Haase, Functional calculus, variational methods and Liapunov's theorem, {\it Arch. Math.}, \textbf{77} (2001), 65-75.

\bibitem{Daprato-Grisvard}G.\ Da Prato and P.\ Grisvard, Somme d'op\'{e}rateurs lin\'{e}aires et \'{e}quations diff\'{e}rentielles op\'{e}rationnelles, \textit{J. Math. Pures Appl.} \textbf{54} (1975), 305-387.

\bibitem{DaPrato-Sinestrari} G. Da Prato, E. Sinestrari, Differential operators with non dense domain,\textit{ Annali della Scuola Normale Superiore di Pisa-Classe di Scienze}, \textbf{14(2)} (1987), 285-344.

\bibitem{DiBlasio} G. Di Blasio, Non-linear age-dependent population diffusion, \textit{Journal of Mathematical Biology}, \textbf{8} (1979), 265-284.

\bibitem{dore-venni} G. Dore and A. Venni, On the closedness of the sum of two closed operators, \textit{Mathematische Zeitschrift}, \textbf{196} (1987), 189-201.


\bibitem{Ducrot-Magal} A. Ducrot, P. Magal, Travelling wave solutions for an infection-age structured model with diffusion, \textit{Proceedings of the Royal Society of
Edinburgh: Section A Mathematics}, \textbf{139} (2009), 459-482.

\bibitem{DM} A. Ducrot and P. Magal, Travelling wave solution for infection-age structured model with vital dynamics, \textit{Nonlinearity} \textbf{24} (2011), 2891-2911. 

\bibitem{DM-Pisa} A. Ducrot and P. Magal, Integrated Semigroups and Parabolic Equations. Part II: Semilinear problems, \textit{Annali della Scuola Normale Superiore Di Pisa, Classe di Scienze}, to appear.

\bibitem{DM-TAMS} A. Ducrot and P. Magal, A center manifold for second order semi-linear differential equations on the real line and applications to the existence of wave trains for the Gurtin-McCamy equation, \textit{Trans. Amer. Math. Soc.}, \textbf{372} (2019), 3487-3537.

\bibitem{Ducrot-Magal-Prevost09} A. Ducrot, P. Magal and K. Prevost,
Integrated Semigroups and Parabolic Equations. Part I: Linear Perburbation
of Almost Sectorial Operators, {\it J. Evol. Equ.}, \textbf{10} (2010), 263-291.

\bibitem{Engel-Nagel} K.-J. Engel and R.\ Nagel, \textit{One Parameter
Semigroups for Linear Evolution Equations},\ Springer-Verlag, New York, 2000.

\bibitem{Gurtin1} M.E.  Gurtin  and  R.C.  MacCamy,  Non-linear  age-dependent  population dynamics, \textit{Arch. Rational Mech. Anal.}, \textbf{54} (1974), 281-300.

\bibitem{Gurtin2} M.E. Gurtin and R.C. MacCamy, On the diffusion of biological popula-tions, \textit{Math. Biosci.}, \textbf{33} (1977), 35-49.

\bibitem{Henry} D. Henry, \textit{Geometric theory of semilinear parabolic
equations}, Lecture Notes in Mathematics, vol. 840 Springer-Verlag (1981).


\bibitem{Kellermann} H. Kellermann and M. Hieber, Integrated semigroups, \textit{J.\ Funct. Anal.}, \textbf{84} (1989), 160-180.

\bibitem{Kubo-Langlais} M. Kubo and M. Langlais, Periodic solutions for nonlinear population dynamics models with age-dependence and spatial structure,\textit{ J. Differential Equations}, \textbf{109} (1994), 274-294.

\bibitem{labbas} R. Labbas, Some results on the sum of linear operator with nondense domains, \textit{Annali di Matematica Pura ed Applicata}, \textbf{154.1 }(1989), 91-97.

\bibitem{Langlais} M. Langlais, Large time behavior in a nonlinear age-dependent population dynamics problem with spatial diffusion, \textit{J. Math. Biol.}, \textbf{26} (1988), 319-346.

\bibitem{LMR12} Z. Liu, P. Magal and S. Ruan, Center-unstable manifold theorem for non-densely defined Cauchy problems, and the stability of bifurcation periodic orbits by Hopf bifurcation, \textit{Canadian Applied Mathematics Quarterly,} \textbf{2} (2012), 135-178. 

\bibitem{Lunardi} A. Lunardi, \textit{Analytic semigroups and optimal
regularity in parabolic problems}, Birkhauser, Basel, 1995.

\bibitem{Magal-Ruan07} P. Magal, and S. Ruan, On Integrated Semigroups and
Age Structured Models in $L^{p}$ Spaces, \textit{Differential and Integral
Equations}, \textbf{20} (2007), 197-139.

\bibitem{Magal-Ruan09a} P. Magal and S. Ruan, Center Manifolds for
Semilinear Equations with Non-dense Domain and Applications to Hopf
Bifurcation in Age Structured Models, \textit{Memoirs of the American
Mathematical Society} \textbf{202} (2009), no. 951.

\bibitem{Magal-Ruan09b} P. Magal and S. Ruan, On Semilinear Cauchy Problems
with Non-dense Domain, \textit{Advances in Differential Equations}, \textbf{14} (2009), 1041-1084.

\bibitem{Magal-Ruan17} P. Magal and S. Ruan, \textit{Theory and Applications of
Abstract Semilinear Cauchy Problems}, Springer-Verlag, 2018

\bibitem{MLR14} Z. Liu, P. Magal and S. Ruan, Normal forms for semilinear equations with non-dense domain with applications to age structured models, \textit{J. Differential Equations}, \textbf{257 }(2014) 921-1011.


\bibitem{Neubrander} F. Neubrander, Integrated semigroups and their application to the abstract Cauchy problem, \textit{Pac.\ J.\ Math.}, \textbf{135} (1988), 111-155.

\bibitem{periago} F. Periago and B. Straub, A functional calculus for almost
sectorial operators and applications to abstract evolution equations, 
{\it J. Evol. Equ.}, \textbf{2} (2002), 41-68.

\bibitem{pruss-sohr}  J.~Pr\"uss and H.~Sohr, On operators with bounded imaginary powers in Banach spaces, \textit{Mathematische Zeitschrift}, \textbf{203} (1990), 429-452.


\bibitem{roidos} N. Roidos, Closedness and invertibility for the sum of two closed operators, \textit{Advances in Operator Theory}, \textbf{3} (2016), 582-605. 

\bibitem{roidos2} N. Roidos, On the inverse of the sum of two sectorial operators, \textit{Journal of Functional Analysis}, \textbf{265} (2013), 208-222.

\bibitem{Thieme97} H. R.\ Thieme, On commutative sums of generators, \textit{Rendiconti Instit. Mat. Univ. Trieste}, \textbf{28} (1997), Suppl., 421-451.

\bibitem{Thieme90a} H. R. Thieme, \textquotedblleft Integrated
semigroups\textquotedblright\ and integrated solutions to abstract Cauchy problems, \textit{J. Math. Anal. Appl.}, \textbf{152 }(1990), 416-447.

\bibitem{Thieme08} H. R. Thieme, Differentiability of convolutions, integrated semigroups of bounded semi-variation and the inhomogeneous Cauchy problem, \textit{J. Evol.
Equ.}, \textbf{8} (2008), 283-305.


\bibitem{Walker1} C. Walker, Some remarks on the asymptotic behavior of the semigroup associated with age-structured diffusive populations, \textit{Monatshefte f\"ur Mathematik}, \textbf{170} (2012), 481-501.

\bibitem{Webb08} G.F. Webb, \textit{Population models structured by age, size, and spatial position}, In: Magal P., Ruan S. (eds) Structured Population Models in Biology and Epidemiology. Lecture Notes in Mathematics, vol \textbf{1936}. Springer Berlin Heidelberg (2008). 

\bibitem{yagi} A. Yagi, \textit{Abstract parabolic evolution equations and their applications}, Springer Science \& Business Media, 2009.

\end{thebibliography}
\end{document}